\newtheorem{definition}{Definition}
\newtheorem{lemma}{Lemma}
\newtheorem{theorem}{Theorem}
\newtheorem{proposition}{Proposition}
\newtheorem{remark}{Remark}[section]
\newtheorem{assumption}{Assumption}
\newcommand{\e}{\mathrm{e}}
\renewcommand{\i}{\mathrm{i}}
\renewcommand{\d}{\mathrm{d}}
\title{Almost global existence for Ultra-differential Hamiltonian PDEs in  $L^2$ space}
\date{}
\author[1]{Bingqi Yu}
\author[,1,2]{Yong Li\thanks{Corresponding author}}
\affil[1]{School of Mathematics, Jilin University, Changchun 130012, People’s Republic of China.}
\affil[2]{Center for Mathematics and Interdisciplinary Sciences,
		Northeast Normal University, Changchun 130024, People’s Republic of China.}
\begin{document}
	
	\maketitle
		\footnote{E-mail address: \url{yubq23@mails.jlu.edu.cn}(B. Yu),  \url{liyong@jlu.edu.cn}(Y. Li)}
	
	\begin{abstract}
		This paper combines the decay of high modes with the smallness introduced by high orders, leading to a normal form lemma for infinite-dimensional Hamiltonian systems under ultra-differentiable regularity. We prove the sub-exponential stability time of a wide class of Hamiltonian PDEs, including the Schr\"odinger equation with convolution potentials, fractional-order Schr\"odinger equations, and beam equations with metrics. When the conditions are equivalent to previous ones, the stability time we obtain reaches Bourgain's predicted optimal bound. Furthermore, we approach earlier results under lower conditions. These results are discussed within a general framework we propose, which applies to the ultra-differential class.
		
		\noindent\textbf{Keywords: Infinite-dimensional Hamiltonian system, almost global existence, Ultra-differential class. } 
	\end{abstract}
	
	\section{Introduction}

	\subsection{Effective stability for Hamiltonian systems}
	
	The long-time stability of Hamiltonian systems under small 
	perturbations is a central problem in dynamical systems and 
	mathematical physics. A fundamental question is whether, and 
	for how long, the qualitative features of a solution, such 
	as the size of the initial data, can be preserved under a 
	small perturbation of order $\varepsilon$. This question is 
	sometimes referred to as \emph{effective stability}, since 
	one seeks not permanent stability but rather stability up to 
	a time that is large relative to $1/\varepsilon$.
	
	In the finite-dimensional setting, this question is answered 
	by the Nekhoroshev theorem \cite{N77,P93,B99,BG24}: for analytic nearly 
	integrable Hamiltonian systems, the action variables remain 
	stable up to time of order $\e^{1/\varepsilon}$. This 
	exponential stability time reflects the analytic regularity 
	in an essential way; in the finitely differentiable case, the 
	stability time degrades to a polynomial in $1/\varepsilon$ 
	\cite{B10}.
	
	In the infinite-dimensional setting, the analogous problem 
	concerns Hamiltonian PDEs with small initial data. Here the 
	question takes the form of \emph{almost global existence}: 
	one seeks solutions that remain small -- in a suitable 
	function space norm -- up to a time that is as long as 
	possible in terms of the initial size $\varepsilon$. This 
	problem has attracted significant attention over the past 
	three decades, with results obtained for a wide range of 
	equations including nonlinear Schr\"odinger, wave, and beam 
	equations on compact manifolds 
	\cite{BDGS07,BFM24,BFG20,BMP19,BMP20,B00,FGL13,FM24,W15}. 
	 Under 
	finite-order differentiable conditions, polynomial-in-$(1/\varepsilon)$ 
	stability time is achievable \cite{BDGS07,BFM24,BFG20,B00,W15}. 
	When the regularity is promoted to the analytic or Gevrey 
	class, one expects a dramatic improvement, but the precise 
	optimal bound has long been an open question.
	
	A landmark result in this direction is due to Benettin, 
	Fr\"ohlich and Giorgilli \cite{B88}, who established 
	stability time of order $|\ln\varepsilon|^2/\ln|\ln\varepsilon|$ 
	for systems with finite-range coupling. In the general 
	analytic case, Bourgain \cite{B04} predicted that this same 
	quantity
	$$
	T \sim \exp\!\left(\frac{|\ln\varepsilon|^2}{\ln|\ln\varepsilon|}\right)
	$$
	should represent the optimal stability time for 
	infinite-dimensional Hamiltonian systems. Subsequent works 
	confirmed stability times of the weaker form 
	$|\ln\varepsilon|^{1+a}$ under analytic or Gevrey-like 
	conditions \cite{BMP20,CMW20,FG13,P18}, and Bourgain's 
	predicted bound was recently realized for the 
	Schr\"odinger--Poisson equation \cite{BCGW24}. The question 
	of achieving this optimal bound within a \emph{general} 
	framework, and of extending the analysis to classes of 
	regularity beyond Gevrey, remains open. The present 
	paper addresses both of these problems.

	\subsection{Main results}
	Before stating our results, we recall that the 
	\emph{ultra-differentiable} function classes form a large 
	family of regularity spaces lying strictly between $C^\infty$ 
	and the analytic class. They are defined by prescribing an 
	explicit decay rate for the Fourier coefficients, of the form 
	$\exp(-sf(|j|))$ for a suitable weight function $f$. The 
	Gevrey class (corresponding to $f(x)=x^\theta$, $0<\theta<1$) 
	and the logarithmic ultra-differentiable class (corresponding 
	to $f(x)=(\ln(|j|+\kappa))^q$, $q>1$) are two canonical 
	representatives of this family, but many other choices of 
	$f$ are possible. The results of this paper are stated for 
	these two representative cases; however, as will be clear 
	from the proof, the method applies to any weight function 
	$f$ satisfying the mild sub-additivity condition in 
	Assumption~1, and yields a stability time that can be 
	computed explicitly from $f$ via the relation \eqref{balance} 
	below.
	
	We mainly consider the Hamiltonian system 
	\[H=H_0+P,H_0=\sum_{j\in\mathbb{Z}^\mathsf{d}}\omega_j|u_j|^2,\]
	where $P$ is 
	a perturbation as in \eqref{Hamiltonian}. The precise definitions of 
	the weighted spaces $W_{s,\theta}^G$ (Gevrey class) and $W_{s,q}^U$ 
	(logarithmic ultra-differentiable class), the norm $\|\cdot\|_s$, and 
	Assumption~2 on the frequencies are given in Sections~\ref{sec:2} 
	and~\ref{sec:3}. Our two main abstract results are as follows.
	
	\begin{theorem}[Gevrey class]\label{res1}
		For Hamiltonian \eqref{Hamiltonian} with initial datum $u(0)=u_0$ 
		in the $\theta$-Gevrey space $W_{s,\theta}^G$, assume the frequencies 
		$\omega_j$ satisfy \textbf{Assumption~2} with $\beta>1$. Then for 
		sufficiently large $s$, there exist a threshold $\varepsilon_0>0$ and 
		constants $C_{\mathtt{sta}},C_{\mathtt{fin}}>0$ such that the following 
		holds: if $u(0)$ is real and
		$$\varepsilon:=\|u(0)\|_{s}<\varepsilon_0,$$
		then
		$$\sup_{|t|\leq T_{\varepsilon}}\|u(t)\|_{s}<C_{\mathtt{sta}}\varepsilon,$$
		where
		$$T_{\varepsilon}>\frac{1}{C_{\mathtt{sta}}}
		\exp\!\left(C_{\mathtt{fin}}
		\frac{|\ln\varepsilon|^2}{\ln|\ln\varepsilon|}\right).$$
	\end{theorem}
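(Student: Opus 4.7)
The plan is to combine a truncated Birkhoff normal form procedure with the Gevrey decay of high Fourier modes, then optimize the truncation order against $\varepsilon$ so as to recover Bourgain's predicted bound. The proof is driven by a normal form lemma of the kind announced in the abstract, which is where the smallness coming from high order (Taylor degree) must be traded against the smallness coming from the decay of high modes.

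First I would introduce an ultraviolet cutoff at mode $M$ and split the perturbation as $P=P^{\leq M}+P^{>M}$, where $P^{>M}$ collects monomials involving at least one index with $|j|>M$. Because $u_0\in W^G_{s,\theta}$ enjoys Gevrey decay, the contribution of $P^{>M}$ to the evolution of $\Vert u(t)\Vert_s$ will be bounded by a quantity of order $\exp(-c\, s\,M^\theta)$ on the time scale under consideration, and can be absorbed into the final remainder. On the complementary piece $P^{\leq M}$ I would iteratively apply Lie-series canonical transformations $\Phi_n=\exp(X_{\chi_n})$ for $n=3,\dots,N$: at each step one solves the cohomological equation $\{H_0,\chi_n\}+P_n=Z_n$ with $Z_n$ the resonant part of degree $n$ relative to $H_0$. \textbf{Assumption 2} with $\beta>1$ provides exactly the small-divisor bound needed to invert $\{H_0,\cdot\}$ on the non-resonant subspace, at a cost that is polynomial in $M$ and $n$. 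After $N$ such steps the transformed Hamiltonian has the shape $H_0+Z+R$, with $Z$ in normal form (so that $Z$ nearly preserves the actions $|u_j|^2$, and hence $\Vert u\Vert_s$) and a remainder controlled, schematically, by
\[\Vert R\Vert_s \;\leq\; C\,\varepsilon^{N+1}\,M^{C'\beta N}+C\,\varepsilon^{3}\exp(-s M^\theta).\]

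Finally I would close the argument by a standard bootstrap on $\Vert u(t)\Vert_s$: as long as $\Vert u(t)\Vert_s\leq 2C_{\mathtt{sta}}\varepsilon$, the derivative of $\Vert u(t)\Vert_s^2$ along the flow is controlled by $\Vert R\Vert_s\cdot\Vert u(t)\Vert_s$, so the stability time is $\gtrsim \varepsilon/\Vert R\Vert_s$. Optimizing $N$ and $M$ jointly — concretely, $N\sim|\ln\varepsilon|/\ln|\ln\varepsilon|$ together with $M$ chosen so that $s\,M^\theta\sim|\ln\varepsilon|^2/\ln|\ln\varepsilon|$ — balances the Taylor remainder against the high-mode tail and drives both contributions to $\Vert R\Vert_s$ down to $\varepsilon\cdot\exp(-C_{\mathtt{fin}}|\ln\varepsilon|^2/\ln|\ln\varepsilon|)$, which delivers the stated bound on $T_{\varepsilon}$.

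The main obstacle I expect is the careful bookkeeping of accumulated small-divisor losses across the $N$ normal-form steps while simultaneously letting the Gevrey weight shrink slightly at each Lie transform, so that after $N$ steps the remainder is still measured in a norm comparable to $\Vert\cdot\Vert_s$. Extracting the sharp exponent $|\ln\varepsilon|^2/\ln|\ln\varepsilon|$ rather than the weaker $|\ln\varepsilon|^{1+a}$ available in earlier works is exactly where Assumption 2 with $\beta>1$ must be exploited through the general framework announced in the introduction; any estimate that is merely polynomial-loose in $N$ loses the critical logarithm in the exponent and falls back to a suboptimal stability time.
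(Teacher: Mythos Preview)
Your overall strategy—normal form to degree $N$, high-mode truncation, joint optimization—is the paper's, but the treatment of $P^{>M}$ has a genuine gap. You take $P^{>M}$ to contain every monomial with \emph{at least one} index $|j|>M$ and assert that its contribution to $\tfrac{d}{dt}\Vert u\Vert_s$ is $O(\exp(-csM^\theta))$. This fails: the Gevrey tail bound (Lemma~\ref{cut}) needs at least \emph{three} high-mode factors. For a cubic $u_{J_1}u_{J_2}u_{J_3}$ with only $|J_1|>M$, the $\bar J_1$-component of the vector field is $u_{J_2}u_{J_3}$, which carries no high-mode smallness; after applying the weight $e^{sf(|J_1|)}$ and momentum conservation you recover only $O(r^2)$, not $O(r^2e^{-csM^\theta})$. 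The paper repairs this by running the normal form also on monomials with one or two high modes (see the flowchart in Section~\ref{sec:3}): the non-resonant cases NR1, NR21, NR22 are eliminated in Lemma~\ref{homo}, where the frequency growth A.1 and the block separation A.3—not only the Diophantine bound A.2—furnish the needed divisor estimates; the resonant case R2 (two high modes in a single block $\Omega_\alpha$ with opposite signs) is kept in $Z$ and controlled not by smallness but by the block structure of A.3, which makes the induced linear operator on $w^>$ preserve each $\Vert w_\alpha\Vert_{\ell^2}$ and hence $\Vert w^>\Vert_s$ up to the constant $C_{\mathtt{sta}}$. Your sketch invokes neither A.1 nor A.3, so both mechanisms are missing and the bootstrap cannot close.

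A secondary inaccuracy: under A.2 the divisor loss at degree $n$ is $M^{\tau n^p}$, not polynomial in $n$, and the cumulative high-order remainder is $\varepsilon^{N}(NM)^{CN^{p+1}}$ rather than $\varepsilon^{N+1}M^{C'\beta N}$. The balancing of this against $e^{-sM^\theta}$ is then more delicate than your linear-in-$N$ exponent suggests; it is precisely this computation (Proposition~1, via the Lambert $W$ function) that singles out $p=1$ as the case delivering Bourgain's exponent $|\ln\varepsilon|^2/\ln|\ln\varepsilon|$.
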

	
	\begin{remark}
		This result achieves the stability time conjectured by Bourgain 
		\cite{B04} for infinite-dimensional Hamiltonian systems, now 
		established in the weighted $L^2$ space and within a general 
		framework. The same bound was recently obtained in the weighted 
		$L^1$ space for the Schr\"odinger--Poisson equation \cite{BCGW24}. 
		Our result improves upon the earlier $|\ln\varepsilon|^{1+a}$-type 
		bounds of \cite{BMP20,CMW20,FG13,P18}.
	\end{remark}
	
	\begin{theorem}[Logarithmic ultra-differentiable class]\label{res2}
		For Hamiltonian \eqref{Hamiltonian} with initial datum $u(0)=u_0$ 
		in the ultra-differentiable space $W_{s,q}^U$, assume the frequencies 
		$\omega_j$ satisfy \textbf{Assumption~2} with $\beta>1$. Then for 
		sufficiently large $s$, there exist a threshold $\varepsilon_0>0$ and 
		constants $C_{\mathtt{sta}},C_{\mathtt{fin}}>0$ such that the following 
		holds: if $u(0)$ is real and
		$$\varepsilon:=\|u(0)\|_{s}<\varepsilon_0,$$
		then
		$$\sup_{|t|\leq T_{\varepsilon}}\|u(t)\|_{s}<C_{\mathtt{sta}}\varepsilon,$$
		where
		$$T_{\varepsilon}>\frac{1}{C_{\mathtt{sta}}}
		\exp\!\left(C_{\mathtt{fin}}|\ln\varepsilon|^{1+a}\right)$$
		for any $a<\dfrac{q-1}{qp+1}$.
	\end{theorem}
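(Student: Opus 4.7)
The plan is to follow the same normal-form strategy used in the proof of Theorem~\ref{res1}, retuning the Fourier truncation and the number of iteration steps to the logarithmic weight $\exp(s(\ln(|j|+\kappa))^q)$ that defines $W_{s,q}^U$. The main tool is the general normal-form lemma established earlier in the paper, whose per-step small-divisor losses are governed by \textbf{Assumption 2} with parameters $\beta>1$ and resonance index $p$.

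First, I would introduce a mode cutoff $N$ and split $P=P_{\le N}+P_{>N}$. The ultra-differential decay built into $W_{s,q}^U$ gives, on a ball of radius $\varepsilon$,
\[
\Vert P_{>N}\Vert_s\;\lesssim\;\exp\!\bigl(-s(\ln N)^q\bigr)\,\varepsilon^n,
\]
where $n\ge 3$ is the lowest degree of $P$. To dominate this tail by a target remainder after $M$ normal-form iterations I impose $(\ln N)^q\gtrsim M|\ln\varepsilon|$, which fixes $\ln N\asymp (M|\ln\varepsilon|)^{1/q}$.

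Next I iterate the normal-form lemma $M$ times on the truncated Hamiltonian $H_0+P_{\le N}$. At every step the homological equation is solved via \textbf{Assumption 2}; because the non-resonance condition carries index $p$, the divisor loss accumulated over $M$ steps grows, schematically, as $\exp(C_1 M^{p+1}\ln N)$, the exponent $p+1$ reflecting the combined effect of an $r^p$-type growth per step summed over $r=1,\ldots,M$. The overall remainder thus admits a bound of the form
\[
\Vert R_M\Vert_s\;\lesssim\;\varepsilon^{M+1}\exp\!\bigl(C_1 M^{p+1}\ln N\bigr)+\varepsilon\exp\!\bigl(-s(\ln N)^q\bigr).
\]
Substituting $\ln N\asymp (ML)^{1/q}$ with $L:=|\ln\varepsilon|$ and asking the normal-form remainder to be of order $\varepsilon^2$ leads to the binding constraint $M^{p+1}(ML)^{1/q}\lesssim ML$, i.e.\ $M^{(qp+1)/q}\lesssim L^{(q-1)/q}$, equivalently $M\lesssim L^{(q-1)/(qp+1)}$. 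Choosing $M\asymp L^{a}$ with any $a<(q-1)/(qp+1)$ then yields $\ln T\gtrsim ML\asymp L^{1+a}$, which is the claimed stability exponent.

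Finally, having arranged a transformed Hamiltonian $H_0+Z+R_M$ in which the integrable part $H_0+Z$ preserves the super-actions $|u_j|^2$ and the Lie transform is near-identity, I conclude by a standard continuity bootstrap: on the time interval during which $\Vert u(t)\Vert_s<C_{\mathtt{sta}}\varepsilon$, the remainder $R_M$ controls the drift, and this interval extends to $|t|\le T_\varepsilon$. The main obstacle is precisely the bookkeeping that produces the schematic loss $\exp(C_1 M^{p+1}\ln N)$ across all $M$ iterations: because the logarithmic weight is much weaker than in the Gevrey case, each polynomial factor in $N$ is comparatively expensive, so the resonance combinatorics at every step must be tracked tightly. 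This accounting is what ultimately yields the critical exponent $(q-1)/(qp+1)$ and forces the ``any $a<$'' qualifier in the statement.
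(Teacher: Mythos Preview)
Your strategy is the paper's: feed the abstract normal-form machinery (Lemma~\ref{iter} and Theorem~\ref{Normal}) and then optimize the cutoff $N$ and iteration depth against the logarithmic weight, exactly as Proposition~2 does; the balance $M^{p+1}\ln N\asymp(\ln N)^q\asymp M|\ln\varepsilon|$ and the exponent $(q-1)/(qp+1)$ coincide with the paper's computation.

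Two places need tightening. First, ``asking the normal-form remainder to be of order $\varepsilon^2$'' is a slip: a remainder of size $\varepsilon^2$ only yields $T\sim\varepsilon^{-1}$. What your constraint $M^{p+1}\ln N\lesssim ML$ actually buys is that the divisor loss eats only a fixed fraction of $\varepsilon^{M}$, so both pieces of the remainder are $\lesssim\varepsilon^{cM}$ and hence $\ln T\gtrsim cML$; this is what the paper does when it sets $(r(dN)^{d^p})^d=\e^{-f(N)}$ in Theorem~\ref{Normal}. Second, the claim that $H_0+Z$ ``preserves the super-actions $|u_j|^2$'' is not correct in this setting, and your naive split $P=P_{\le N}+P_{>N}$ hides why. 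Only monomials with at least \emph{three} high modes enjoy the decay $\e^{-(s-s_0)f(N)}$ (Lemma~\ref{cut}); terms with one or two high modes must be handled inside the iteration. In particular the normal form retains a piece $Z_>$ with exactly two high modes of opposite sign lying in the same spectral block $\Omega_\alpha$ (case R2 in the flowchart), and this piece does \emph{not} commute with the individual $|u_j|^2$. The paper controls $\Vert u^>\Vert_s$ via the block-diagonal structure coming from Assumption~A.3 (Propositions~3--4): the flow of $Z_>$ preserves each block $\ell^2$ norm, and the bounded block width together with the subadditivity of $f$ (Assumption~A.0) then gives $\Vert u^>(t)\Vert_s\le C_{\mathtt{sta}}\Vert u^>(0)\Vert_s$. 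Without this mechanism the bootstrap in your last paragraph does not close.
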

	
	\begin{remark}
		This theorem identifies the weakest regularity sufficient to 
		achieve $\exp(|\ln\varepsilon|^{1+a})$-type stability time. 
		Previous results under the same ultra-differentiable conditions 
		obtained only $\exp(|\ln\varepsilon|/\ln|\ln\varepsilon|)$ for 
		$1<q<2$ \cite{FM23} and $\exp(|\ln\varepsilon|\ln|\ln\varepsilon|)$ 
		for $q=2$ \cite{CCMW22}. For the specific case $q=2$, our bound 
		$|\ln\varepsilon|^{4/3}$ improves upon $|\ln\varepsilon|^{5/4}$ 
		of \cite{CLW24} and $|\ln\varepsilon|^{14/13}$ of \cite{SLW23}.
	\end{remark}
	
	The generality of Assumptions~1--2 allows the above theorems to be 
	applied to a broad class of Hamiltonian PDEs. We illustrate this 
	with three representative equations; the verification of the 
	assumptions and complete proofs are given in Section~\ref{sec:app}.
	
	\subsubsection*{Schr\"{o}dinger equation with convolution potential}
	
	We consider
	\begin{equation}\label{ex1}
		\mathrm{i}\partial_t\psi = -\Delta\psi + V*\psi + p(|\psi|^2)\psi,
		\quad x\in\mathbb{T}^\mathsf{d},
	\end{equation}
	where $V$ is a convolution potential and $p\in C^\infty(\mathbb{R},\mathbb{R})$ 
	with $p(0)=0$. The frequencies are $\omega_j = |j|^2 + V_j$, which 
	satisfy Assumption~2 with $\beta=2$ and $p=1$, for a full-measure 
	set of potentials $V$.
	
	\begin{theorem}[See Theorems \ref{ex1thm1},\ref{ex1thm2}  below]\label{thm:NLS}
		There exists a zero-measure set $\mathcal{V}^{\mathtt{res}}\subset\mathcal{V}$ 
		such that for all $V\in\mathcal{V}\setminus\mathcal{V}^{\mathtt{res}}$, 
		$s>S_{\mathtt{fin}}$, and $\varepsilon<\varepsilon_0$, if 
		$\|\psi_0\|_{s,\theta}^G=\varepsilon$, then the solution of 
		\eqref{ex1} satisfies
		$$\|\psi(t)\|_{s,\theta}^G \leq C_{\mathtt{sta}}\varepsilon, 
		\quad \forall\, |t| \leq \frac{1}{C_{\mathtt{sta}}}
		\exp\!\left(C_{\mathtt{fin}}
		\frac{|\ln\varepsilon|^2}{\ln|\ln\varepsilon|}\right).$$
		The same holds in the ultra-differentiable norm $\|\cdot\|_{s,q}^U$ 
		with stability time 
		$\exp(C_{\mathtt{fin}}|\ln\varepsilon|^{1+a})$, 
		$a\leq\frac{q-1}{q+1}$.
	\end{theorem}
	\begin{remark}
		This equation illustrates that the classical Bourgain non-resonance 
		condition for convolution potentials \cite{B05}, as extended in 
		\cite{BMP20}, is a special case of our Assumption~2 with $p=1$.
	\end{remark}

	\subsubsection*{Fractional Schr\"{o}dinger equation}
	
	We consider
	\begin{equation}\label{ex2}
		\mathrm{i}\partial_t\psi = (\Delta+m)^\eta\psi + p(|\psi|^2)\psi,
	\end{equation}
	where $\eta>\frac{1}{2}$. The frequencies are 
	$\omega_j=(|j|^2+m)^\eta$, satisfying Assumption~2 with 
	$\beta=2\eta$ and $p=2$, for almost every mass parameter $m$.
	
	\begin{theorem}[See Theorems \ref{ex2thm1},\ref{ex2thm2}  below]\label{thm:fNLS}
		For any interval $[M_1,M_2]$, there exists a zero-measure set 
		$\mathcal{M}\subset[M_1,M_2]$ such that for all 
		$m\in[M_1,M_2]\setminus\mathcal{M}$, $s>S_{\mathtt{fin}}$, 
		and $\varepsilon<\varepsilon_0$, if 
		$\|\psi_0\|_{s,\theta}^G=\varepsilon$, then the solution of 
		\eqref{ex2} satisfies
		$$\|\psi(t)\|_{s,\theta}^G \leq C_{\mathtt{sta}}\varepsilon, 
		\quad \forall\, |t| \leq \frac{1}{C_{\mathtt{sta}}}
		\exp\!\left(C_{\mathtt{fin}}
		\frac{|\ln\varepsilon|^2}{\ln|\ln\varepsilon|}\right).$$
		The same holds in the ultra-differentiable norm $\|\cdot\|_{s,q}^U$ 
		with stability time 
		$\exp(C_{\mathtt{fin}}|\ln\varepsilon|^{1+a})$, 
		$a\leq\frac{q-1}{3q+1}$.
	\end{theorem}
	\begin{remark}
		This equation illustrates a case where the non-resonance condition 
		requires $p=2$, arising from the use of a single scalar parameter 
		$m$ to control resonances among frequencies with stronger separation.
	\end{remark}

	\subsubsection*{Beam equation with metric}
	
	We consider
	\begin{equation}\label{ex3}
		\psi_{tt}+\Delta_g^2\psi+m\psi 
		= -\frac{\partial p}{\partial\psi}
		+\sum_{l=1}^{L}\partial_{x_l}
		\frac{\partial p}{\partial(\partial_l\psi)},
	\end{equation}
	on the torus $\mathbb{T}^\mathsf{d}$ equipped with a Riemannian metric 
	$g$. The frequencies are $\omega_j=\sqrt{|j|_g^4+m}$, satisfying 
	Assumption~2 with $\beta=2$ and $p=2$, for a full-measure set of 
	metrics $g$.
	
	\begin{theorem}[See Theorems \ref{ex3thm1},\ref{ex3thm2} below]\label{thm:beam}
		For $0<\zeta_1<\zeta_2$, there exists a zero-measure set 
		$\mathcal{G}^{\mathtt{res}}\subset\mathcal{G}_0(\zeta_1,\zeta_2)$ 
		such that for all 
		$g\in\mathcal{G}_0(\zeta_1,\zeta_2)\setminus\mathcal{G}^{\mathtt{res}}$, 
		$s>S_{\mathtt{fin}}$, and $\varepsilon<\varepsilon_0$, if 
		$\|\psi_0\|_{s,\theta}^G=\varepsilon$, then the solution of 
		\eqref{ex3} satisfies
		$$\|\psi(t)\|_{s,\theta}^G \leq C_{\mathtt{sta}}\varepsilon, 
		\quad\forall\,|t| \leq \frac{1}{C_{\mathtt{sta}}}
		\exp\!\left(C_{\mathtt{fin}}
		\frac{|\ln\varepsilon|^2}{\ln|\ln\varepsilon|}\right).$$
		The same holds in the ultra-differentiable norm $\|\cdot\|_{s,q}^U$ 
		with stability time 
		$\exp(C_{\mathtt{fin}}|\ln\varepsilon|^{1+a})$, 
		$a\leq\frac{q-1}{3q+1}$.
	\end{theorem}
	\begin{remark}
		This equation illustrates that the metric $g$ can serve as a 
		non-resonance parameter in place of a scalar, with the algebraic 
		structure of $\mathcal{G}$ playing the role that Diophantine 
		conditions play in the convolution case.
	\end{remark}

	\subsection{Main techniques and contributions}
	
	We summarize the main contributions of this paper in two aspects:
	the novelty of the results and the new techniques developed.
	
	\medskip
	\noindent\textbf{Novelty of the results.}
	The first main contribution is to establish Bourgain's conjectured
	optimal stability time within a \emph{general} abstract framework,
	rather than for a single specific equation. Previous works achieving
	the $|\ln\varepsilon|^2/\ln|\ln\varepsilon|$ bound, such as
	\cite{BCGW24}, were tailored to specific equations and specific
	function spaces. Our Theorem~\ref{res1} obtains the same bound in
	the weighted $L^2$ space for a broad class of Hamiltonian PDEs whose
	frequencies satisfy the separation properties in Assumption~2.
	
	The second contribution concerns the ultra-differentiable regularity
	class. Our Theorem~\ref{res2} establishes stability time of order
	$\exp(|\ln\varepsilon|^{1+a})$ with $a < (q-1)/(qp+1)$, which
	significantly improves upon all previously known bounds in this
	class. A comparison for the representative case $q=2$ is
	illustrative:
	\begin{center}
		\begin{tabular}{lll}
			\hline
			Reference & Stability time & Condition \\
			\hline
			\cite{CLW24} & $|\ln\varepsilon|^{5/4}$ & $q=2$ \\
			\cite{SLW23} & $|\ln\varepsilon|^{14/13}$ & $q=2$ \\
			\cite{CCMW22} & $|\ln\varepsilon|\ln|\ln\varepsilon|$ & $q=2$ \\
			This paper   & $|\ln\varepsilon|^{4/3}$ & $q=2,\,p=1$ \\
			\hline
		\end{tabular}
	\end{center}
	
	The third contribution is a generalization of the non-resonance
	condition. Assumption~2 introduces an exponent $p\geq1$ that
	controls the strength of the small-divisor estimate. The classical
	Bourgain condition for convolution potentials \cite{B05,BMP20}
	corresponds to $p=1$, while weaker non-resonance conditions arising
	from a single scalar parameter, as in the fractional Schr\"odinger
	and beam equations, correspond to $p=2$. Our framework thus
	unifies and extends several non-resonance conditions appearing
	separately in the literature.
	
	\medskip
	\noindent\textbf{Main techniques.}
	The central difficulty in establishing long-time stability for
	infinite-dimensional Hamiltonian systems is to simultaneously
	control the growth of high Fourier modes and the accumulation of
	errors during the iterative normal form procedure. The main technical innovation of this
	paper is to transplant the truncation estimate approach of the
	finite-dimensional Nekhoroshev theory into the infinite-dimensional
	ultra-differentiable setting.
	
	Concretely, we decompose the phase space into low modes ($|j|\leq N$)
	and high modes ($|j|>N$), and perform a Birkhoff-type normal form
	iteration on the low-mode part. A key difficulty absent in the
	finitely differentiable case of \cite{BFM24} is that the number of
	iteration steps is unbounded, so the dependence of all estimates on
	the step count must be tracked explicitly throughout the iteration.
	After $d$ steps, two remainder terms of different origins remain:
	a high-degree remainder controlled by $r^d$, and a high-mode
	remainder controlled by $\e^{-(s-s_0)f(N)}$. The Normal Form Lemma
	is then obtained by choosing the truncation parameter $N$ and the
	iteration depth $d$ so that these two remainders are of the same
	order, via the implicit relation
	\begin{equation}\label{balance}
		d^p \ln(dN) = \frac{f(N)}{d}.
	\end{equation}
	Solving this relation for two representative weight functions
	$f(x)=x^\theta$ and $f(x)=(\ln x)^q$ -- using the Lambert
	$W$-function -- yields the stability times in
	Theorems~\ref{res1} and~\ref{res2} respectively. The high-mode
	dynamics are then controlled separately using the block-diagonal
	structure of the normal form and a bootstrap argument.
	
	\medskip
	\noindent\textbf{Organization of the paper.}
	The remaining sections of this paper are organized as follows.
	Section~\ref{sec:2} introduces the functional setting, including
	the abstract weighted space $W_s$, the Gevrey and ultra-differentiable
	subclasses, and the class of perturbations under consideration.
	Section~\ref{sec:3} formulates the resonance structure and the
	non-resonance Assumption~2, and establishes the solution to the
	homological equation.
	Section~\ref{sec:4} carries out the iterative normal form
	construction with explicit coefficient estimates at each step.
	Section~\ref{sec:5} combines the iteration with the truncation
	estimates to prove the Normal Form Lemma, and computes the resulting
	stability time for each regularity class.
	Section~\ref{sec:6} completes the proof of the main stability
	theorems via a decomposition into low- and high-mode dynamics
	and a bootstrap argument.
	Section~\ref{sec:app} applies the abstract framework to the three
	concrete PDEs:  the Schr\"odinger equation with convolution
	potential, the fractional Schr\"odinger equation, and the beam
	equation with metric. We verify Assumptions~1--2 for these PDEs to suit our abstract theorem.

	\section{Setting}\label{sec:2}
	We denote by the index set $\mathcal{Z}=\mathbb{Z}^\mathsf{d}\times\{-1,1\}$ and, for $J=(j,\sigma)\in\mathcal{Z}$ and $c>0,$
	$$|J|^2:=|j|^2=\sum_{l=1}^{\mathsf{d}}|j_l|^2,\langle j\rangle=\max\{|j|,c\}.$$
	
	In this paper, we are mainly concerned with nearly integrable Hamiltonian
	\begin{equation}\label{Hamiltonian}
		H=H_0+P,H_0=\sum_{j\in\mathbb{Z}^\mathsf{d}}\omega_{j}|u_j|^2,P\in\mathcal{P}_{3,\infty},
	\end{equation}
	on the following infinite-dimensional Banach space:
	$$W_s=\{u=(u_J)_{J\in\mathcal{Z}},u_J\in\mathbb{C}\mid\Vert u\Vert_s:=\sum_{j\in\mathcal{Z}}|u_j|^2\e^{2sf(\langle j\rangle)}<\infty\}. $$
	
	Here, $f$ satisfies following condition:
	\begin{assumption}
		\begin{enumerate}	
			\textbf{A.0} Weight function $f$ satisfies the followings.	
			\item $f:\mathbb{N}^+\to\mathbb{R}^+$;
			\item $f$ is a monotonically increasing function tending to $+\infty$;
			\item There exists a constant $C_f<1$ satisfying $f(\sum_{l=1}^d x_l)\leq f(x_m)+C_f\sum_{l\neq m}f(x_l),$ where $x_m=\max\{x_1,\dots,x_d\},\forall x_l\geq c$.
		\end{enumerate}
	\end{assumption}

	Two typical function classes are contained in this assumption which are infinitely differentiable but non-analytic functions: the Gevrey class and the ultra-differentiable function class.
	If $f$ is taken as $f(x)=x^\theta,0<\theta<1$, the weighting corresponds to the Gevrey class function space which we denote by $W^G_{s,\theta}$. If taken as $f(x)=(\ln x+\kappa)^q$, where the constant $\kappa$ can be adjust to satisfy the condition of $f$, it corresponds to the ultra-differentiable function space, which we denote as $W^U_{s,q}$.
	The following discussion up to the Normal Form Lemma will consistently use the abstract weighted space $W_s$ as the basis for discussion.
	
	Besides, we denote the ball in $W_s$ centered at the origin with radius $r$ by $B_s(r)$.
	For a functional $H$ defined on the space $W_s$, it determines a Hamiltonian system
	$$\dot{u}_{(j,+1)}=-\i\frac{\partial H}{\partial u_{(j,-1)}},\quad 
	\dot{u}_{(j,-1)}=\i\frac{\partial H}{\partial u_{(j,+1)}}. $$
	By denoting $\overline{J}=(j,-\sigma)$ for $J=(j,\sigma)$, we can also denote the corresponding vector field:
	$$X_H(u):=(X_J)_{J\in\mathcal{Z}},\quad (X_H)_{(j,\sigma)}=-\sigma \i\frac{\partial H}{\partial u_{\overline{(j,\sigma)}}}.$$
	For $d$-degree monomials $M=\prod_{l=1}^{d}u_{J_l},J_l=(j_l,\sigma_l)$, we denote its multi-index $\mathcal{J}=(J_1,...,J_d)$ and its momentum indicator $$\mathcal{M}_d(\mathcal{J})=\sum_{l=1}^{d}\sigma_lj_l.$$ 
	
	We will focus primarily on the monomials and polynomials whose multi-indices are in the following sets
	$$\mathcal{I}_d=\{\mathcal{J}\in\mathcal{Z}^d\mid\mathcal{M}_d(\mathcal{J})=0\},$$ which means momentum conservation.
	
	For a homogeneous polynomial $P$ of degree $d$, it can be written in the form
	\begin{equation}\label{P}
		P(u)=\sum_{J_1,\dots,J_d\in\mathcal{Z}}P_{J_1,\dots J_d}u_{J_1}\dots u_{J_d}.
	\end{equation}
	If we denote $\{J_1,\dots,J_d\}=\mathcal{J},$ we also denote $P(u)=\sum_{\mathcal{J}\in\mathcal{Z}^d}P_{\mathcal{J}}u^\mathcal{J}.$
	We are now ready to define the functional class under consideration
	\begin{definition}
		Let $d\geq1.$ We denote by $\mathcal{P}_{d}$ the space of formal polynomials $P(u)$ of the form \eqref{P} satisfying the following conditions:
		\begin{enumerate}
			\item Momentum conservation:
			$P(u)$ contains only monomials with $0$ momentum indicator, namely $$P(u)=\sum_{\mathcal{J}\in\mathcal{I}_d}P_{\mathcal{J}}u_{J_1}\dots u_{J_d};$$
			\item Reality:
			for any $\mathcal{J}\in\mathcal{Z}^d$, we have $\overline{P_{\mathcal{J}}}=P_{\overline{\mathcal{J}}}$;
			\item Boundedness:
			$$C_P:=\sup_{\mathcal{J}\in\mathcal{I}_d}|P_{\mathcal{J}}|<\infty.$$
		\end{enumerate}
	\end{definition}
	For given $r,s>0$, we can endow the space $\mathcal{P}_d$ with the norm:
	$$|P|_{r,s}:=\frac{1}{r}\sup_{u\in B_s(r)}\Vert X_{P}\Vert_s.$$
	For given integers $\infty> d_2\geq d_1\geq1$, we denote by $\mathcal{P}_{d_1,d_2}:=\bigcup_{k=d_1}^{d_2}\mathcal{P}_k$ the space of polynomials $P(u)$ that may be written as
	$$P=\sum_{k=d_1}^{d_2}P_k,P_k\in\mathcal{P}_k,$$
	endowed with the same norm
	$$|P|_{r,s}:=\frac{1}{r}\sup_{u\in B_s(r)}\Vert X_{P}\Vert_s.$$
	Similarly, we can define $\mathcal{P}_{d,\infty}=\bigcup_{k\geq d}\mathcal{P}_k$. Since $P\in\mathcal{P}_{d,\infty}$ can be written as 
	$$P=\sum_{k\geq d}P_k,P_k\in\mathcal{P}_k,$$
	the norm of $\mathcal{P}_{d,\infty}$ is the same as above. When $d_1>d_2$, we define $\mathcal{P}_{d_1,d_2}:=\emptyset.$
	
	For $P_1,P_2\in \mathcal{P}_{d_1,d_2},$ we define their Poisson brackets by
	$$\{P_1,P_2\}:=-\i\sum_{(j,\sigma)\in\mathcal{Z}}\sigma\frac{\partial P_1}{\partial u_{(j,\sigma)}}\frac{\partial P_2}{\partial u_{(j,-\sigma)}}.$$
	
	For a positive integer $N,$ we can divide the index into two cases: high mode $|J|> N$ and low mode $|J|\leq N$. Then $u\in W_s$ can also decompose by index case 
	$$u=u^{>}+u^{<}:=\sum_{|J|>N}u_{J}+\sum_{|J|\leq N}u_{J}.$$
	Then we can define a projector $\Pi^>(u):=u^>$ for $u\in W_s$. 
	In this way, we can classify polynomials based on the degree of vanishing at $0$ with respect to $u^{>}.$
	
	The constants with text as subscripts in this paper will be provided in Appendix A, constants with numeric subscripts are pure constants, while constants with variable subscripts solely depend on these variables and do not affect the main conclusion.
	
	\section{Resonance and Normal Form}\label{sec:3}
	Consider the frequencies in
	$$H_0=\sum_{j\in\mathbb{Z}^d}\omega_{j}|u_j|^2,$$
	and we demand the following assumptions:
	
	\begin{assumption}
		The frequency $(\omega_{j})_{j\in\mathbb{Z}^\mathsf{d}}$ satisfies the following properties:
		
		\textbf{A.1} There exist constants $C_0$ and $\beta>1$ such that, for sufficiently large $j$, we have
		$$\frac{1}{C_0}\leq\frac{\omega_{j}}{|j|^\beta}\leq C_0.$$

		\textbf{A.2}  Any finite component of $(\omega_{j})_{j\in\mathbb{Z}^\mathsf{d}}$ is Diophantine, namely, for $N$ large enough, $\forall J_1,\dots,J_d $ with $|J_l|\leq N,l=1,\dots,d$, and $\sum_{l=1}^{d}\omega_{j_l}\sigma_l\neq0$, we have 
		\begin{equation}\label{nr}
			\left|\sum_{l=1}^{d}\omega_{j_l}\sigma_l\right|\geq\frac{\gamma}{N^{\tau d^p}}.
		\end{equation}
		And when $\sum_{l=1}^{d}\omega_{j_l}\sigma_l=0,$ it must imply that $d$ is even and that there exists a permutation of $(1,\dots,d)$ such that
		$$\forall i=1,\dots d/2,\omega_{j_{\tau(i)}}=\omega_{j_{\tau (i+d/2)}}\  \text{and}\  \sigma_{\tau(i)}=\sigma_{\tau(i+d/2)}.$$
		This is the vital assumption for non-resonance of $(\omega_{j})_{j\in\mathbb{Z}^d}$.
		
		\textbf{A.3} We define a block division for index set: $\mathcal{Z}=\bigcup_{\alpha}\Omega_{\alpha}$.
		The division satisfies
		\begin{enumerate}
			\item There exists a $\Omega_0$ satisfying $\forall J\in\Omega_0,|J|\leq C_0$;
			\item $\forall\alpha\neq0,$ there exists a constant $C_1$ such that $\sup_{J\in\Omega_{\alpha}}|J|-\inf_{J\in\Omega_{\alpha}}|J|\leq C_1$;
			\item $\forall j_1\in\Omega_{\alpha},j_2\in\Omega_\beta,\alpha\neq\beta,|\omega_{j_1}-\omega_{j_2}|\geq C_2(|j_1|^\delta+|j_2|^\delta),$
				where $\delta>0.$	
		\end{enumerate}	

	\end{assumption}

	For a multi-index $(J_1,J_2,...,J_d)=\mathcal{J}\in\mathcal{Z}^d$, if we denote by $\mathsf{s}$ the number of components satisfying $|J_i|>N$, we can illustrate the division of the index set using the following flowchart:
	
	\begin{center}
		\begin{tikzpicture}[node distance=1cm,auto]
			\node (start) [draw, rectangle]{All of $\mathcal{J}\in\mathcal{Z}^d$};
			\node (case0) [draw, rectangle, rounded corners, align=center, below=0.5cm of start, xshift=-3cm] {$\mathsf{s}=0$};
			\node (case1) [draw, rectangle, rounded corners, align=center, below=0.5cm of start, xshift=0cm] {$\mathsf{s}=1$};
			\node (case2) [draw, rectangle, rounded corners, align=center, below=0.5cm of start, xshift=3cm] {$\mathsf{s}=2$};
			\node (case3) [draw, rectangle, rounded corners, align=center, below=0.5cm of start, xshift=6cm] {$\mathsf{s}\geq3$};
			
			\node (case01) [draw, rectangle, rounded corners, align=center, below=1cm of case0, xshift=-1.5cm] {$\sum_{l=1}^{p}\omega_{j_l}\sigma_l=0$};
			\node (R0) [draw, rectangle, rounded corners, below=0.1cm of case01]{R0:$\mathcal{J}\in\mathscr{Z}$};
			\node (case02) [draw, rectangle, rounded corners, align=center, below=1cm of case0, xshift=1.5cm] {$\sum_{l=1}^{p}\omega_{j_l}\sigma_l\neq0$};
			\node (NR02) [draw, rectangle, rounded corners, below=0.1cm of case02]{NR0:$\mathcal{J}\in \mathscr{N}$};
			\node (NR1) [draw, rectangle, rounded corners, below=0.1cm of case1]{NR1:$\mathcal{J}\in\mathscr{N}$};
			\node (3) [draw, rectangle, rounded corners, below=0.1cm of case3]{$\mathcal{J}\in\mathscr{R}$};
			
			\node (case21) [draw, rectangle, rounded corners, align=center, below=1cm of case2, xshift=-1cm] {$\sigma_1\sigma_2=1$};
			\node (case22) [draw, rectangle, rounded corners, align=center, below=1cm of case2, xshift=2cm] {$\sigma_1\sigma_2=-1$};
			
			\node (case221) [draw, rectangle, rounded corners, align=center, below=1cm of case22, xshift=-2cm] {$J_1,J_2\in\Omega_\alpha$};
			\node (case222) [draw, rectangle, rounded corners, align=center, below=1cm of case22, xshift=2cm] {$J_1\in\Omega_\alpha,J_2\in\Omega_\beta,\alpha\neq\beta$};		
			\node (NR21) [draw, rectangle, rounded corners, below=0.1cm of case21]{NR21:$\mathcal{J}\in\mathscr{N}$};
			\node (R221) [draw, rectangle, rounded corners, below=0.1cm of case221]{R2:$\mathcal{J}\in\mathscr{R}$};
			\node (NR222) [draw, rectangle, rounded corners, below=0.1cm of case222]{NR22:$\mathcal{J}\in\mathscr{N}$};	 		
			
			\draw[->] (start) -- (case0);
			\draw[->] (start) -- (case1);
			\draw[->] (start) -- (case2);	
			\draw[->] (start) -- (case3);
			\draw[->] (case0) -- (case01);
			\draw[->] (case0) -- (case02);
			\draw[->] (case2) -- (case21);
			\draw[->] (case2) -- (case22);
			\draw[->] (case22) -- (case221);
			\draw[->] (case22) -- (case222);	 		
			
		\end{tikzpicture}			 
	\end{center}
	All multi-indices are divided into: resonant set $\mathscr{Z}$ (containing cases R0 and R2), non-resonant set $\mathscr{N}$ (containing cases NR0, NR1, NR21, and NR22), and high mode set $\mathscr{R}$.
	
	\begin{definition}[$N$-cutting normal form]
		For given integers $N\gg1,d\geq3$, we say that a polynomial $Z\in\mathcal{P}_{3,d}$ of the form:
		$$Z=\sum_{l=3}^{d}\sum_{\mathcal{J}\in\mathcal{I}_k}Z_{\mathcal{J}}u_{J_1}...u_{J_k}$$
		is a $N$-cutting normal form, if $\mathcal{J}=\{J_1,\dots,J_k\}$ in one of the following cases:
		\begin{enumerate}
			\item For every $1\leq l\leq k$, $|J_l|\leq N $ and $\sum_{l=1}^{k}\sigma_l\omega_{j_l}=0 $;
			\item There exactly exist $J_1=(j_1,\sigma_1),J_2=(j_2,\sigma_2),|J_1|>N,|J_2|>N,\sigma_1\sigma_2=-1$, and $J_1,J_2$ in the same $\Omega_\alpha$. 
		\end{enumerate}
		Namely $\mathcal{J}\in\mathscr{Z}.$
	\end{definition}

	\section{Iteration lemma}\label{sec:4}
	
	We first give an estimate of the solution of the homological equation
	$$\{H_0,G\}+P=Z.$$ 
	\begin{lemma}[Homological equation]\label{homo}
		For $P\in\mathcal{P}_{k,p}$ of at most  degree $2$ with respect to $u^>$ and $H_0$'s frequency satisfy \textbf{Assumption 2} , there exists a $G\in\mathcal{P}_{k,p}$ solving the following equation
		$$P=\sum_{\mathcal{J}\in\mathscr{Z}} P_\mathcal{J}u^\mathcal{J}+\sum_{\mathcal{J}\in\mathscr{N}} P_\mathcal{J}u^\mathcal{J}=Z-\{H_0,G\},$$
		with the estimates:
		\begin{enumerate}
			\item $|Z|_{r,s}\leq|P|_{r,s}$;
			\item $|G|_{r,s}\leq \frac{1}{\gamma}(C_{\mathtt{deno}}dN)^{C_{\mathtt{exp}}d^p}|P|_{r,s}.$
		\end{enumerate}
	\end{lemma}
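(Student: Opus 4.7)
The plan is to define $Z$ by restricting the coefficients of $P$ to the resonant multi-indices, and $G$ by dividing the non-resonant coefficients by the corresponding linear combination of frequencies. Specifically, set $Z := \sum_{\mathcal{J}\in\mathscr{Z}} P_\mathcal{J}\, u^\mathcal{J}$, and, for each $\mathcal{J}\in\mathscr{N}$,
$$G_\mathcal{J} := \frac{P_\mathcal{J}}{-\i \sum_{l}\sigma_l\omega_{j_l}}, \qquad G_\mathcal{J}=0 \text{ for } \mathcal{J}\in\mathscr{Z}.$$
Because $\{H_0, u^\mathcal{J}\} = -\i\bigl(\sum_l\sigma_l\omega_{j_l}\bigr)\, u^\mathcal{J}$, a direct computation yields $Z - \{H_0, G\} = P$. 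The hypothesis that $P$ has degree at most $2$ with respect to $u^>$ ensures that every multi-index of $P$ falls in $\mathscr{Z}\cup\mathscr{N}$ (the high-mode set $\mathscr{R}$ requires $\mathsf{s}\geq 3$), so this decomposition captures all of $P$.

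For estimate (1) I would use a majorant argument: replacing every coefficient $P_\mathcal{J}$ by $|P_\mathcal{J}|$ cannot decrease $\sup_{u\in B_s(r)}\|X_P\|_s$ (align the phases of $u_J$ to add contributions coherently), and on the resulting positive version of the norm, restriction to a subset of multi-indices is trivially monotone. This gives $|Z|_{r,s}\leq |P|_{r,s}$, and the same majorant principle is what will turn the coefficient-wise division by small divisors in estimate (2) into a norm bound on $G$.

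Estimate (2) then reduces to a uniform lower bound on the small divisor $|\Omega(\mathcal{J})| := \bigl|\sum_l \sigma_l\omega_{j_l}\bigr|$ over $\mathcal{J}\in\mathscr{N}$, which I would obtain by following the flowchart. In case NR0 every $|J_l|\leq N$ and \textbf{A.2} gives $|\Omega(\mathcal{J})|\geq \gamma/N^{\tau d^p}$ at once. In cases NR1 and NR21 (one or two high modes, equal signs), the key dichotomy is the size of the largest high mode $|j_1|$: if $|j_1|\geq K d N$ for a suitable $K=K(C_0,\beta)$, then by \textbf{A.1} the term $|\omega_{j_1}|$ dominates the full low-mode sum, which is bounded by $(d-1)C_0 N^\beta$, so $|\Omega(\mathcal{J})|\gtrsim |j_1|^\beta\geq (KdN)^\beta/C_0$; otherwise every index satisfies $|J_l|\leq KdN$ and \textbf{A.2} at enlarged threshold $KdN$ yields $|\Omega(\mathcal{J})|\geq \gamma/(KdN)^{\tau d^p}$. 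For NR22, the block separation \textbf{A.3}(3) gives $|\omega_{j_1}-\omega_{j_2}|\geq C_2(|j_1|^\delta+|j_2|^\delta)$, and the same dichotomy applies: either this separation already dominates the low-mode sum, or it is small, which forces $|j_1|,|j_2|$ to be controlled by a power of $dN^\beta$, placing us again in the scope of \textbf{A.2} at an enlarged threshold. Uniting the four subcases produces a uniform bound
$$|\Omega(\mathcal{J})|\geq \gamma (C_{\mathtt{deno}} d N)^{-C_{\mathtt{exp}} d^p},$$
and multiplying the coefficient-wise estimate of $P$ by this factor, followed by the majorant argument from estimate (1), delivers the stated bound on $|G|_{r,s}$.

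The hardest step will be the NR22 case: when $|\omega_{j_1}-\omega_{j_2}|\geq C_2(|j_1|^\delta+|j_2|^\delta)$ barely exceeds its lower threshold, the $(d-2)$ low modes can still produce a sum of size $dC_0N^\beta$ that threatens to cancel the separation. The resolution is to trade the failure of separation for an a priori polynomial bound $|j_1|,|j_2|\lesssim (dN^\beta/C_2)^{1/\delta}$, and then re-invoke \textbf{A.2} at that enlarged threshold. Carrying this through without inflating $C_{\mathtt{exp}}$ beyond a constant multiple of $\tau$, and ensuring $C_{\mathtt{deno}}, C_{\mathtt{exp}}$ do not depend on $d$ or $N$, is the delicate bookkeeping, since these constants will be reused uniformly across the iteration lemma that this homological step feeds into.
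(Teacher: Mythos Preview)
Your proposal is correct and follows essentially the same approach as the paper: define $Z$ and $G$ coefficient-wise, then handle the small-divisor estimate by the four-case analysis NR0/NR1/NR21/NR22 exactly as in the flowchart, with the same dichotomy (large high mode $\Rightarrow$ domination via \textbf{A.1} or \textbf{A.3}; small high mode $\Rightarrow$ enlarged threshold and \textbf{A.2}). Your identification of NR22 as the most delicate case, and the trade-off $|j_1|,|j_2|\lesssim (dN^\beta/C_2)^{1/\delta}$ followed by re-invoking \textbf{A.2}, is precisely what the paper does with its threshold $N_2$. One minor remark: your explicit majorant argument for estimate (1) is more careful than the paper, which simply asserts ``since $Z$ is composed of only a part of $P$, its estimate is straightforward''; your phase-alignment justification is the right way to make this rigorous for the vector-field norm used here.
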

	
	\begin{proof}
		Since $Z$ is composed of only a part of $P$, its estimate is straightforward.
		
		Assume $G=\sum_{\mathcal{J}\in\mathcal{N}}G_{\mathcal{J}}u^{\mathcal{J}},$ then $$-\{H_0,G\}=\sum_{\mathcal{J}\in\mathcal{N}}\sum_{l=1}^{d}\omega_{j_l}\sigma_lG_{\mathcal{J}}u^{\mathcal{J}}=\sum_{\mathcal{J}\in\mathcal{N}}P_{\mathcal{J}}u^{\mathcal{J}}.$$
		Now we mainly need to estimate the small denominator $\sum_{l=1}^{d}\omega_{j_l}\sigma_l$.
		
		According to the flow chart, case NR0 consists of all indices satisfying $|J_l|\leq N$ and $\sum_{l}\sigma_l\omega_{j_l}\neq0,$ then it directly follows from (\ref{nr}) in A.2  $\left|\sum_{l=1}^{d}\omega_{j_l}\sigma_l\right|\geq\frac{\gamma}{N^{\tau d^p}}.$ 
		
		For case NR1, we have the following inequalities from A.1 $$\left|\sum_{l\neq1}\sigma_l\omega_{j_l}\right|\leq (d-1)C_0N^\beta,\quad |\omega_{j}|\geq \frac{1}{C_0}|j_1|^\beta.$$
		So, when $|j_1|\geq d^{\frac{1}{\beta}}C_0^{\frac{1}{\beta}}N:=N_1$, 
		$$\left|\sum_{l=1}^{d}\omega_{j_l}\sigma_l\right|\geq C_0N^\beta>1.$$
		When $|j_l|\leq|j_1|\leq N_1,l\neq1,$ we can get $$\left|\sum_{l=1}^{d}\omega_{j_l}\sigma_l\right|\geq\frac{\gamma}{N_1^{\tau d^p}}=\frac{\gamma}{(d^{\frac{1}{\beta}}C_0^{\frac{2}{\beta}}N)^{\tau d^p}}$$
		from A.2.
		
		For case NR21, we also have the following inequalities from A.1
		$$\left|\sum_{l\neq1,2}\sigma_l\omega_{j_l}\right|\leq (d-2)C_0N^\beta,\quad |\omega_{j_1}|+|\omega_{j_2}|\geq \frac{1}{C_0}|\max\{|j_1|,|j_2|\}|^\beta.$$ 
		So, when $\max\{|j_1|,|j_2|\}\geq d^{\frac{1}{\beta}}C_0^{\frac{2}{\beta}}N=N_1$, 
		$$\left|\sum_{l=1}^{d}\omega_{j_l}\sigma_l\right|\geq2C_0N^\beta>1.$$
		When $|j_l|\leq\max\{|j_1|,|j_2|\}\leq N_1,l\neq1,2$, we also can get
		$$\left|\sum_{l=1}^{d}\omega_{j_l}\sigma_l\right|\geq\frac{\gamma}{N_1^{\tau d^p}}=\frac{C_{p,\beta}}{(d^{\frac{1}{\beta}}C_0^{\frac{2}{\beta}}N)^{\tau d^p}}$$
		from A.2.
		
		For case NR22, we use A.3 to get
		$$\left|\sum_{l\neq1,2}\sigma_l\omega_{j_l}\right|\leq (d-2)C_0N^\beta,\quad |\omega_{j_1}-\omega_{j_2}|\geq C_2(|j_1|^\delta+|j_2|^\delta) .$$
		Thus, when $|j_1|^\delta+|j_2|^\delta\geq \frac{C_0}{C_2}dN^\beta:=N_2^\delta,$
		$$\left|\sum_{l=1}^{p}\omega_{j_l}\sigma_l\right|\geq 2C_0N^{\beta}>1.$$  
		When $|j_1|^\delta+|j_2|^\delta\leq \frac{C_0}{C_2}dN^\beta=N_2^{\delta},|j_l|\leq N,l\neq1,2$, we also have
		$$\left|\sum_{l=1}^{d}\omega_{j_l}\sigma_l\right|\geq\frac{\gamma}{N_2^{\tau d^p}}=\frac{\gamma}{(\frac{C_0}{C_2}dN^\beta)^{\frac{\tau}{\delta} d^p}}$$
		from A.2.
		
	From the above analysis, we conclude that the small denominators for all non-resonant indices satisfy the estimate:
	
		$$   \left|\sum_{l=1}^{d}\omega_{j_l}\sigma_l\right|\geq\frac{\gamma}{(C_{\mathtt{deno}}dN)^{C_{\mathtt{exp}}d^p}}.$$
		Therefore, $$|G|_{r,s}\leq \frac{1}{\gamma}(C_{\mathtt{deno}}dN)^{C_{\mathtt{exp}}d^p}|P|_{r,s}.$$ 	 
	\end{proof}
	Now we can begin the iterate process.

	\begin{lemma}[Iteration lemma]\label{iter}
		For $H$ as defined in (\ref{Hamiltonian}), let $d\geq k\geq3,d>D_{\mathtt{fin}},s>s_0>\frac{\mathsf{d}}{2}$. Then for parameters satisfying the conditions $$rd^2C_{\mathtt{thre}}(C_{\mathtt{deno}}dN)^{C_{\mathtt{exp}}d^p}<1,\quad r_{k}=2r-\frac{(k-3)r}{d-3},$$ there exists a sequence of transformations $\mathcal{T}^{(k)}:B^s(r_k)\to B^s(r_3)$ satisfying the following properties:
		\begin{enumerate}
			\item $H^{(k)}:=H\circ\mathcal{T}^{(k)}=H_0+Z_{k}+P_k+R_{k,d}+R_{k,>}$;
			\item $P_k\in\mathcal{P}_{k,d},|P_k|_{r_k,s}\leq d^{2k-7}(C_{\mathtt{estP}}r)^{k-2}(C_{\mathtt{deno}}dN)^{C_{\mathtt{exp}}(k-3)d^p}  $;
			\item 
			$|R_{k,d}|_{r_k,s}\leq r^{d-2}(C_{\mathtt{rema}}dN)^{C_{\mathtt{exp}}d^{p+1}}$;
			\item 
			$|R_{d,>}|_{r,s}\leq \frac{C_{R}r}{\e^{(s-s_0)f(N)}}$.
		\end{enumerate}  
	\end{lemma}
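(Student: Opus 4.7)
The plan is to induct on $k$ from $k=3$ up to $k=d$, constructing $\mathcal{T}^{(k+1)}=\mathcal{T}^{(k)}\circ\Phi^1_{G_k}$ where $\Phi^1_{G_k}$ is the time-$1$ flow of a Hamiltonian $G_k$ chosen via Lemma \ref{homo} to eliminate the degree-$k$ part $P_k^{(k)}$ of $P_k$. At the base step I would take $\mathcal{T}^{(3)}=\mathrm{id}$, $Z_3=0$, and decompose $P\in\mathcal{P}_{3,\infty}$ into the three pieces $P_3$ (degrees $3$ through $d$ with at most two high-mode arguments), $R_{3,d}$ (degrees $>d$), and $R_{3,>}$ (degrees $\le d$ with at least three high-mode arguments). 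The bounds on $P_3$ and $R_{3,d}$ are inherited from $|P|_{r,s}$; the bound on $R_{3,>}$ is the crucial input from the weighted structure---a monomial with three or more arguments of index $|J|>N$ carries a weight loss of at least $\e^{-3sf(N)}$ in the $W_s$-norm, and after pulling out one factor $\e^{-(s-s_0)f(N)}$ the remainder gets absorbed into a standard tameness constant valid for $s_0>\mathsf{d}/2$.

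For the inductive step, Lemma \ref{homo} applied to $P_k^{(k)}$ (which inherits the ``at most two high-mode factors'' property by induction) produces $G_k$ with $\{H_0,G_k\}+P_k^{(k)}=Z_k^{\mathrm{new}}$ and $|G_k|_{r_k,s}\le\gamma^{-1}(C_{\mathtt{deno}}dN)^{C_{\mathtt{exp}}d^p}|P_k^{(k)}|_{r_k,s}$. Expanding via the Lie series
\[
H^{(k+1)}=H^{(k)}\circ\Phi^1_{G_k}=\sum_{n\ge 0}\frac{1}{n!}\mathrm{ad}_{G_k}^n H^{(k)},
\]
the degree-$k$ piece cancels and I would re-sort every remaining summand by its polynomial degree ($\le d$ vs $>d$) and its number of high-mode arguments ($\le 2$ vs $\ge 3$), placing them respectively into the new $P_{k+1}$, $R_{k+1,d}$, and $R_{k+1,>}$. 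The quantitative control rests on a Cauchy-type Poisson bracket estimate $|\{f,g\}|_{r-\rho,s}\le C d^2\rho^{-1}|f|_{r,s}|g|_{r,s}$ used with $\rho=r/(d-3)$, which is precisely why $r_k=2r-(k-3)r/(d-3)$. The hypothesis $rd^2C_{\mathtt{thre}}(C_{\mathtt{deno}}dN)^{C_{\mathtt{exp}}d^p}<1$ then forces a uniform geometric ratio between successive Lie-series terms, and a one-step recursion of the form $|P_{k+1}|\le d^2(C_{\mathtt{estP}}r)(C_{\mathtt{deno}}dN)^{C_{\mathtt{exp}}d^p}|P_k|$ telescopes to the stated product after $k-3$ iterations; iterated brackets of order $n\ge 2$ produce only degrees $\ge 2k-2>d$ after a few steps and flow into $R_{k+1,d}$ with the exponentially small sum.

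The subtlest point, and the step I expect to cost the most effort, is maintaining the structural invariant ``at most two indices with $|J|>N$'' through every Lie series expansion: a single Poisson bracket of two such polynomials can a priori produce up to four high-mode indices, so at each order of the Lie series I would need to re-truncate and channel the excess into the updated $R_{k+1,>}$ without spoiling the clean bound $C_R r\,\e^{-(s-s_0)f(N)}$. Verifying that this accumulated high-mode remainder stays $k$-independent relies on iterating the base-case truncation estimate together with the submultiplicative property of $f$ encoded in assumption \textbf{A.0}, and this is where I expect the most careful bookkeeping of constants to be required so that the final bounds on $R_{k,d}$ and $R_{k,>}$ match exactly what is claimed.
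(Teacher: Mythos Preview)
Your overall architecture---inductive Birkhoff iteration via the homological equation, with a three-way split (low-degree perturbation, high-degree remainder, high-mode remainder) maintained at each step---is the right one and matches the paper. Two technical choices, however, differ from the paper in ways that affect the bookkeeping, and your claimed one-step recursion is not correct as written.

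\textbf{What the paper does differently.} At step $k$ the paper solves the homological equation for the \emph{entire} $P_k\in\mathcal{P}_{k,d}$ (all degrees $k$ through $d$), not just for the homogeneous degree-$k$ piece $P_k^{(k)}$. This makes $G_{k+1}\in\mathcal{P}_{k,d}$ mixed-degree, and the transformed $P_k$ becomes pure normal form $Z_k^*$; the new $P_{k+1}$ then arises \emph{only} from Lie terms $\mathrm{ad}_{G_{k+1}}^l$ applied to $H_0$, $Z_k$, and $P_k$. In your scheme, where only the degree-$k$ slice is killed, the new $P_{k+1}$ still contains the degrees $k{+}1$ through $d$ of the old $P_k$ essentially unchanged, so the recursion $|P_{k+1}|\le(\text{small})\cdot|P_k|$ cannot hold---that leftover piece has norm comparable to $|P_k|$, not a small multiple of it. Your approach can be repaired by tracking degree-homogeneous norms separately, but the clean telescoping you describe does not go through. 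A second difference: because $Z_k=\sum_{m\le k}Z_m^*$ accumulates and is itself transformed by $\Phi_{G_{k+1}}$, the paper's actual recursion reads
\[
|P_{k+1}|_{r_{k+1},s}\le \frac{\e\,|G_{k+1}|_{r_k,s}}{E}\sum_{m=3}^{k}|P_m|_{r_m,s},\qquad E=\tfrac{1}{16\e d},
\]
a sum over all previous steps, which is then controlled as a geometric series under the smallness hypothesis. The paper also uses a \emph{finite} Lie expansion (order $n$ with $n(k-2)+k>d$) with integral remainder, rather than the full series, which is what channels the tail directly into $R_{k+1,d}$.

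\textbf{On the high-mode remainder.} Your anticipated difficulty---maintaining the ``at most two high indices'' invariant through every Lie bracket and carefully re-truncating---is handled much more cheaply in the paper: at each step one simply splits $P_{k+1}^*=P_{k+1}+R_{k+1,>}^*$ by peeling off the terms with $\ge 3$ high-mode factors, sets $R_{k+1,>}=R_{k,>}\circ\Phi_{G_{k+1}}+R_{k+1,>}^*$, and then at the \emph{final} step $k=d$ invokes the cutting lemma once to get $|R_{d,>}|_{r,s}\le C_R r\,\e^{-(s-s_0)f(N)}$. No step-by-step accumulation estimate for $R_{k,>}$ is carried out.
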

	\begin{proof}
		For the initial $H=H_0+P,$ and given integer $N,$ we can decompose $P=P_3+R_{3,>}+R_{3,d}$, where $R_{3,d}\in\mathcal{P}_{d+1,\infty}$, $R_{3,>}$ is at least three degree for $u^>$. Then we have $$H^{(3)}:=H_0+P_3+R_{3,>}+R_{3,d}.$$
		Now we consider the following homological equations for $d\geq k\geq3$:
		$$\{H_0,G_{k+1}\}+P_k=Z_{k}^*,$$  
		and define $Z_3:=Z_3^*.$ 
		Then we can discuss the time-$1$ map of Hamiltonian flow generated by $G_{k+1}$ action on every term of $H^{(k)}$:
		\begin{align*}
			H^{(k+1)}:=H^{(k)}\circ\Phi_{G_{k+1}} =(H_0+Z_{k}+P_{k}+R_{k,d}+R_{k,>})\circ\Phi_{G_{k+1}}.
		\end{align*}
		
		Consider the transformation acting on term $H_0$. Let $n$ be an integer such that $n(k-2)+k>d$, then
		\begin{align*}
			H_0\circ\Phi_{G_{k+1}} &=H_0+\{H_0,G_{k+1}\}+\sum_{l=2}^{n}\frac{ad^l_{G_{k+1}}}{l!}H_0+R_{H_0,G_{k+1}}\\
			&=H_0+\{H_0,G_{k+1}\}+P_{H_0,k+1}+R_{H_0,G_{k+1}}.
		\end{align*}
		By the choice of $n$, we can obtain $P_{H_0,k+1}\in\mathcal{P}_{k+1,d},R_{H_0,G_{k+1}}\in\mathcal{P}_{d+1,\infty}$.
		
		Consider the transformation acting on term $Z_k$:
		\begin{align*}
			Z_k\circ\Phi_{G_{k+1}}&=Z_k+\sum_{l=1}^{n}\frac{ad^l_{G_{k+1}}}{l!}Z_k+R_{Z_k,G_{k+1}}\\
			&=Z_k+P_{Z_k,k+1}+R_{Z_k,G_{k+1}},
		\end{align*}
		where $n$ is taken the same way as above, so there is $P_{Z_k,k+1}\in\mathcal{P}_{k+1,d},R_{Z_k,G_{k+1}}\in\mathcal{P}_{d+1,\infty}$.
		
		Consider the transformation acting on term $P_k$:
		\begin{align*}
			P_k\circ\Phi_{G_{k+1}}&=P_k+\sum_{l=1}^{n}\frac{ad^l_{G_{k+1}}}{l!}P_k+R_{P_k,G_{k+1}}\\
			&=P_k+P_{P_k,k+1}+R_{P_k,G_{k+1}}.
		\end{align*}
		There is also $P_{P_k,k+1}\in\mathcal{P}_{k+1,d},R_{P_k,G_{k+1}}\in\mathcal{P}_{d+1,\infty}$.
		
		Consider the transformation acting on term $R_{k,d}$. Note that $R_{k,d}\circ\Phi_{G_{k+1}}$ has at least $d+1$ order zero at $u=0$. Hence
		$$R_{k,d}\circ\Phi_{G_{k+1}}\in\mathcal{P}_{d+1,\infty}.$$
		
		Consider the transformation acting on term $R_{k,>}$. Now
		$R_{k,>}\circ\Phi_{G_{k+1}}$ has at least a third order zero at $u^>=0$.
		
		Summing up the above, we can rearrange as follows for the $H^{(k+1)}$:
		\begin{align*}
			H^{(k+1)}:&=H_0+(\{H_0,G_{k+1}\}+P_k)+Z_k+\\
			&+P_{H_0,k+1}+P_{Z_k,k+1}+P_{P_k,k+1}\\
			&+R_{H_0,G_{k+1}}+R_{Z_k,G_{k+1}}+R_{P_k,G_{k+1}}+R_{k,\bar{p}}\circ\Phi_{G_{k+1}}\\
			&+R_{k,>}\circ\Phi_{G_{k+1}}.
		\end{align*} 
		Define
		\begin{align*}
			R_{d,k+1}:&=R_{H_0,G_{k+1}}+R_{Z_k,G_{k+1}}+R_{P_k,G_{k+1}}+R_{k,\bar{p}}\circ\Phi_{G_{k+1}},\\
			P_{k+1}^*:&=P_{H_0,k+1}+P_{Z_k,k+1}+P_{P_k,k+1}.
		\end{align*}
		Then we make decompose $P_{k+1}^*=P_{k+1}+ R_{k+1,>}^*$, where $R_{k+1,>}^*$ includes all terms having at least 3 degree zero at $u^>=0$, $P_{k+1}\in\mathcal{P}_{k+1,\bar{p}}$ and the zero of this term with respect to $u^>$ is at most $2$ degree. Therefore, we have
		\begin{align*}
			H^{(k+1)}&=H_0+( Z_{k}^*)+Z_k+P_{k+1}^*+R_{k+1,d}+R_{k,>}\circ\Phi_{G_{k+1}}\\
			&=H_0+(Z_k+ Z_{k}^*)+P_{k+1}+R_{k+1,d}+(R_{k,>}\circ\Phi_{G_{k+1}}+ R_{k+1,>}^*)\\
			&:=H_0+Z_{k+1}+P_{k+1}+R_{k+1,d}+R_{k+1,>}. 
		\end{align*}
		To estimate the terms in $H^{(k+1)},$ we first use Lemma \ref{homo} to get
		\begin{align*}
			|G_{k+1}|_{r_{k},s}&\leq\frac{|P_k|_{r_k,s}}{\gamma}(C_{\mathtt{deno}}dN)^{C_{\mathtt{exp}}d^p},\\
			|Z_{k+1}-Z_k|_{r_k,s}&\leq| Z_{k}^*|_{r_k,s}\leq|P_k|_{r_k,s}.
		\end{align*}
		
		We use induction to prove the estimates of $P_k$ and $G_{k+1}$ during the iteration process.
		
		By the choice of $r$, we have the first inductive step
		\begin{align*}
			|G_4|_{r_3,s}&\leq\frac{|P_3|_{r_3,s}}{\gamma}(C_{\mathtt{deno}}dN)^{C_{\mathtt{exp}}d^p}\leq\frac{2C_Pr}{\gamma}(C_{\mathtt{deno}}dN)^{C_{\mathtt{exp}}d^p}\\
			&\leq E:=\frac{1}{16\e d}<\frac{r_k-r_{k+1}}{8\e r_k}.
		\end{align*} 
		Then we can use Lemma \ref{Lie} to prove the estimates for $P_{k+1},G_{k+2}$ based on the estimates for $P_{k},G_{k+1}$ inductively. Note that
		\begin{align*}
			|P_{k+1}|_{r_{k+1},s}&\leq|P_{H_0,k+1}|_{r_{k+1},s}+|P_{Z_k,k+1}|_{r_{k+1},s}+|P_{P_k,k+1}|_{r_{k+1},s}\\
			&\leq|\sum_{l=2}^{n}\frac{ad^l_{G_{k+1}}}{l!}H_0|_{r_{k+1},s}+|\sum_{l=1}^{n}\frac{ad^l_{G_{k+1}}}{l!}Z_k|_{r_{k+1},s}+|\sum_{l=1}^{n}\frac{ad^l_{G_{k+1}}}{l!}P_k|_{r_{k+1},s}\\
			&\leq|\sum_{l=1}^{n-1}\frac{ad_{G_{k+1}}^l}{(l+1)!}\{G_{k+1},H_0\}|_{r_{k+1},s}+|\sum_{l=1}^{n}\frac{ad^l_{G_{k+1}}}{l!}(Z_k+P_k)|_{r_{k+1},s}\\
			&\leq\sum_{l=1}^{n-1}\frac{1}{(l+1)!}(\frac{|G_{k+1}|_{r_k,s}}{2E})^l|P_k|_{r_k,s}\\
			&+\sum_{l=1}^{n}\frac{1}{l!}(\frac{|G_{k+1}|_{r_k,s}}{2E})^l(|P_k|_{r_k,s}+\sum_{m=3}^{k-1}|Z_{m+1}-Z_m|_{r_k,s}+|Z_3|_{r_k,s})\\
			&\leq\sum_{l=1}^{n}\frac{2}{l!}(\frac{|G_{k+1}|_{r_k,s}}{2E})^l(\sum_{m=3}^{k}|P_m|_{r_m,s})\\
			&\leq \frac{\e|G_{k+1}|_{r_k,s}}{E}\sum_{m=3}^{k}|P_m|_{r_m,s}.
		\end{align*}
		When $k=3$, we have 
		$$|P_4|_{r_4,s}\leq \frac{\e}{E\gamma } C_P^2r_3^2(C_{\mathtt{deno}}dN)^{C_{\mathtt{exp}}d^p}\leq\frac{64\e^2d}{\gamma }C_P^2r^2(C_{\mathtt{deno}}dN)^{C_{\mathtt{exp}}d^p}.$$
		Therefore, when $k\geq4$, we will use induction to prove that there exists a constant $C_{\mathtt{estP}}$ such that $|P_{k}|_{r_k,s}\leq d^{2k-7}(C_{\mathtt{estP}}r)^{k-2}(C_{\mathtt{deno}}dN)^{C_{\mathtt{exp}}(k-3)d^p} $ holds for $k\geq 4$. And 
		\begin{align*}
			|P_{k+1}|_{r_{k+1},s}&\leq\frac{e}{\gamma E}d^{2k-7}(C_{\mathtt{estP}}r)^{k-2}(C_{\mathtt{deno}}dN)^{C_{\mathtt{exp}}(k-2)d^p}(\sum_{l=3}^{k}d^{2l-7}(C_{\mathtt{estP}}r)^{l-2}(C_{\mathtt{deno}}dN)^{C_{\mathtt{exp}}(l-3)d^p})\\
			&\leq \frac{16\e^2}{\gamma}d^{2k-6}(C_{\mathtt{estP}}r)^{k-2}(C_{\mathtt{deno}}dN)^{C_{\mathtt{exp}}(k-2)d^p}\frac{C_{\mathtt{estP}}r}{1-d^2C_{\mathtt{estP}}r(C_{\mathtt{deno}}dN)^{C_{\mathtt{estP}}d^p}}\\
			&\leq \frac{32\e^2}{\gamma}d^{2k-6}(C_{\mathtt{estP}}r)^{k-1}(C_{\mathtt{deno}}dN)^{C_{\mathtt{exp}}(k-2)d^p}\\
			&\leq d^{2k-5}(C_{\mathtt{estP}}r)^{k-1}(C_{\mathtt{deno}}dN)^{C_{\mathtt{exp}}(k-2)d^p}.
		\end{align*}
		Here we use the setting of $r,d$. Then we have  
		\begin{align*}
			|G_{k+2}|_{r_{k+1},s}&\leq\frac{1}{\gamma} d^{2k-7}(C_{\mathtt{estP}}r)^{k-2}(C_{\mathtt{deno}}dN)^{C_{\mathtt{exp}}(k-2)d^p}\\
			&\leq \frac{1}{d^3\gamma}(C_{\mathtt{estP}}rd^2(C_{\mathtt{deno}}dN)^{C_{\mathtt{exp}}d^p})^{k-2}\leq E
		\end{align*}
		by Lemma \ref{homo} and the setting of $r$.  
		Thus we have completed the inductive proof of the estimates for $P_k,G_{k+1}.$

		It follows from the definition of norm that $\sup_{u\in B^s(r_{k+1})}\Vert X_{G_{k+1}}\Vert_{s}\leq r_{k+1}|G_{k+1}|_{r_{k+1},s},$ which leads to the near-identity property of $\Phi_{G_{k+1}}$:
		$$\sup_{u\in B^s(r_{k+1})}\Vert (\Phi_{G_{k+1}}-Id)\circ(u)\Vert_s\leq\int_{0}^{1}\sup_{u\in B^s(r_{k+1})} \Vert X_{G_{k+1}}(u(T))\Vert_sdT\leq r_{k+1}E\leq r_{k}-r_{k+1} .$$
		Namely the transformation maps $B^s(r_{k+1})$ into $B^s(r_k)$.
		
		Besides, from the integral-type remainder  $$R_{X,G_{k+1}}=\frac{1}{n!}\int_{0}^{1}(1-T)^n(ad_{G_{k+1}}^{n+1}X)\circ\Phi_{G_{k+1}}^TdT,X=\{G_{k+1},H_0\},Z_k,P_k,$$
		we get 
		\begin{align*}
			|R_{X,G_k+1}|_{r_k,s}&\leq\frac{1}{n!}|X|_{r_k,s}(\frac{|G_{k+1}|}{2E})^n\\
			&\leq d^{2k-7}(C_{\mathtt{estP}}r)^{k-2}(C_{\mathtt{deno}}dN)^{C_{\mathtt{exp}}(k-3)d^p}(\frac{8\e d}{\gamma} d^{2k-7}(C_{\mathtt{estP}}r)^{k-2}(C_{\mathtt{deno}}dN)^{C_{\mathtt{exp}}(k-2)d^p})^n  \\
			&\leq  d^{2k-7+n(2k-4)}(C_{\mathtt{estP}}r)^{(k-2)(n+1)}(C_{\mathtt{deno}}dN)^{C_{\mathtt{exp}}(n(k-2)+k-3)d^p}(\frac{8\e}{d^2\gamma})^n           \\
			&\leq d^{2d-7} (C_{\mathtt{estP}}r)^{d-2}(C_{\mathtt{deno}}dN)^{C_{\mathtt{exp}}(d-3)d^p}
		\end{align*}
		by Lemma \ref{Lie}. Then by the iterative process involving $R_{k+1,d}$,
		\begin{align*}
			|R_{k+1,d}|_{r_{k+1},s}&\leq |R_{H_0,G_k+1}|_{r_k,s}+|R_{Z_k,G_k+1}|_{r_k,s}+|R_{P_k,G_k+1}|_{r_k,s}+|R_{k,d}\circ\Phi_{G_{k+1}}|_{r_k,s}\\
			&\leq 3d^{2d-7} (C_{\mathtt{estP}}r)^{d-2}(C_{\mathtt{deno}}dN)^{C_{\mathtt{exp}}(d-3)d^p}+(1+E)|R_{k,d}|_{r_k,s},\\
			\frac{|R_{k+1,d}|_{r_{k+1},s}}{(1+E)^{k+1}}&\leq\frac{3}{(1+E)^{k+1}}d^{2d-7} (C_{\mathtt{estP}}r)^{d-2}(C_{\mathtt{deno}}dN)^{C_{\mathtt{exp}}(d-3)d^p}+\frac{|R_{k,d}|_{r_k,s}}{(1+E)^k},\\
			|R_{k,d}|_{r_k,s}&\leq3\frac{(1+E)^{k-3}-1}{E}d^{2d-7} (C_{\mathtt{estP}}r)^{d-2}(C_{\mathtt{deno}}dN)^{C_{\mathtt{exp}}(d-3)d^p}+(1+E)^{k-3}C_Pr^{d-2}\\
			&\leq48\e d(\e^{\frac{1}{16\e}}-1)d^{2d-7} (C_{\mathtt{estP}}r)^{d-2}(C_{\mathtt{deno}}dN)^{C_{\mathtt{exp}}(d-3)d^p}+eC_Pr^{d-2}\\
			&\leq  r^{d-2}(C_{\mathtt{rema}}dN)^{C_{\mathtt{exp}}d^{p+1}}.
		\end{align*} 
		We thus derive the estimate of $R_{k,d}$.
		
		Since $R_{k,>}\in\mathcal{P}_{3,\infty}$, we can use Lemma \ref{cut} and the choice of $r$ to derive
		$|R_{d,>}|_{r_k,s}\leq\frac{rC_{R}}{\e^{(s-s_0)f(N)}}.$
		
		Finally, the transformation $\mathcal{T}^{(k)}=\Phi_{G_{4}}\circ...\circ\Phi_{G_{k+1}}:B_s(r_k)\to B_s(r_{3})$ is the desired transformation.
		
	\end{proof}

	\section{Normal Form Lemma}\label{sec:5}
	In this section, we balance the order of the two remaining terms in the Iteration Lemma to obtain the Normal Form Lemma to be used.
	\begin{theorem}[Normal Form Lemma]\label{Normal}
		For $H$ as defined in (\ref{Hamiltonian}), let $d\gg3,P\in\mathcal{P}_{3,\infty},$ then there exist $N_{d}$ and a canonical transformation $\mathcal{T}_{d}$ such that for $s>s_0,$ 
		the following holds for any sufficiently small $r$:
		\begin{align*}
			&\mathcal{T}_{d}:W_s(r)\to W_s(2r),\\
			&\mathcal{T}_{d}^{-1}:W_s(2r)\to W_s(r),\\
			&H^{(d)}:=H\circ\mathcal{T}_d=H_0+Z_d+R_d,
		\end{align*}	 
		where
		\begin{enumerate}
			\item $Z_d\in\mathcal{P}_{3,d}$ is in the $N-$cutting normal form;
			\item $|R_d|_{r,s}\leq \e^{-C_{\mathtt{fin}}f(N(r))}$.
			
		\end{enumerate}
		The relationship between $N(r)$ and $r$ will be implicitly provided in the proof. 
		
		Besides, there is a split $Z_d=Z_0+Z_{>}$, such that the index in $Z_0$ is in the case R0 and the index in $Z_>$ is in the case NR2, 
		and $$\sup_{u\in B_s(r)}\Vert(Id-\Pi^>)(X_{Z_>})\Vert_{s}\leq \e^{-C_{\mathtt{fin}}f(N(r))}. $$
		
	\end{theorem}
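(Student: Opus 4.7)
The plan is to run the Iteration Lemma to exhaustion and then calibrate the parameters $d$ and $N$ as functions of $r$ so that every remainder drops below $\e^{-C_{\mathtt{fin}} f(N(r))}$; the structural part (the normal-form shape and the projection estimate on $Z_>$) will then follow directly from which multi-indices survive the homological equations.

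\textbf{Step 1 (Exhaustive iteration).} I would apply Lemma \ref{iter} for $k=3,4,\dots,d$, composing the successive near-identity maps $\Phi_{G_{k+1}}$ into $\mathcal{T}_d := \Phi_{G_4}\circ\cdots\circ\Phi_{G_{d+1}}$. Because $P_k\in\mathcal{P}_{k,d}$, once $k$ reaches $d$ the residual polynomial space $\mathcal{P}_{d+1,d}$ is empty, so after the last homological step the non-normalized polynomial part vanishes and one is left with
\[
H\circ\mathcal{T}_d = H_0 + Z_d + R_d, \qquad R_d := R_{d+1,d}+R_{d+1,>},
\]
where $Z_d\in\mathcal{P}_{3,d}$ is the sum of the homological solutions $Z_k^*$, all of whose monomials have multi-index in $\mathscr{Z}$. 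By the very definition of $\mathscr{Z}$, this $Z_d$ is already an $N$-cutting normal form.

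\textbf{Step 2 (Balancing the two tails).} I would then calibrate $d=d(r)$ and $N=N(r)$ using the bounds from Lemma \ref{iter},
\[
|R_{d+1,d}|_{r,s}\leq r^{d-2}(C_{\mathtt{rema}}dN)^{C_{\mathtt{exp}}d^{p+1}},\qquad |R_{d+1,>}|_{r,s}\leq \frac{C_R r}{\e^{(s-s_0)f(N)}}.
\]
The high-mode bound is automatically exponentially small in $f(N)$ provided $s-s_0\geq C_{\mathtt{fin}}+1$, which is included in the hypothesis that $s$ is sufficiently large. Taking logarithms of the high-degree bound gives
\[
\ln |R_{d+1,d}|_{r,s}\leq -(d-2)|\ln r| + C_{\mathtt{exp}}\,d^{p+1}\ln(C_{\mathtt{rema}}dN),
\]
so I would choose $d$ and $N$ jointly so that the right-hand side is at most $-C_{\mathtt{fin}}f(N)$, typically by taking $d^{p+1}\ln(dN)\ll d|\ln r|$ and then enforcing $(d-2)|\ln r|\geq 2C_{\mathtt{fin}}f(N)$. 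Solving this implicitly for $N$ in terms of $r$ yields the function $N(r)$ referred to in the statement, and for $r$ small enough such a choice automatically respects the Iteration-Lemma hypothesis $rd^2C_{\mathtt{thre}}(C_{\mathtt{deno}}dN)^{C_{\mathtt{exp}}d^p}<1$.

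\textbf{Step 3 (Inverse map and splitting).} The forward property $\mathcal{T}_d:W_s(r)\to W_s(2r)$ is baked into Lemma \ref{iter} (since $r_d=r$ and $r_3=2r$). The inverse maps $\Phi_{G_{k+1}}^{-1}$ are generated by $-G_{k+1}$ and satisfy the same near-identity estimates, so composing them backwards yields $\mathcal{T}_d^{-1}:W_s(2r)\to W_s(r)$. For the splitting $Z_d = Z_0+Z_>$, I would sort the monomials of $Z_d$ by multi-index case: R0 (all low-mode indices) forms $Z_0$, R2 (exactly two high-mode indices in the same block $\Omega_\alpha$ with opposite signs) forms $Z_>$. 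The projection estimate rests on the observation that every monomial of $Z_>$ carries two factors $u_{J_1}u_{J_2}$ with $|J_1|,|J_2|>N$: differentiating with respect to any low-mode variable $u_{\overline{I}}$ with $|I|\leq N$ touches only the low-mode factors, so both high-mode factors survive. Using $|u_J|\leq\Vert u\Vert_s\e^{-sf(\langle J\rangle)}$ for each surviving high-mode factor contributes a decay of $\e^{-2sf(N)}$ to each monomial, and summing against the polynomial-in-$(d,N)$ bound on $|Z_>|_{r,s}$ yields the claimed bound once $s$ is large enough relative to $C_{\mathtt{fin}}$.

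\textbf{Expected main obstacle.} The delicate point lies in Step 2: the small-denominator loss $(C_{\mathtt{deno}}dN)^{C_{\mathtt{exp}}d^p}$ grows super-polynomially in $d$, so one must keep $d$ from growing too quickly relative to $f(N)$ while still arranging that $r^{d-2}$ is small enough to dominate this loss and reach the target $\e^{-C_{\mathtt{fin}}f(N)}$. This optimization is precisely what fixes the implicit relationship $N=N(r)$ and ultimately determines the stability exponents announced in Theorems \ref{res1} and \ref{res2}.
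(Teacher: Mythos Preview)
Your proposal is correct and follows essentially the same route as the paper: run Lemma~\ref{iter} to $k=d$ so that $P_{d+1}\in\mathcal{P}_{d+1,d}=\emptyset$, then balance the high-degree tail $R_{d,d}$ against the high-mode tail $R_{d,>}$ by choosing $d$ and $N$ implicitly in terms of $r$ (the paper writes this as $d^p\ln(dN)=f(N)/d$ together with $-\ln r = 2f(N)/d$, which is precisely your Step~2 optimisation made explicit), and finally read off the $(Id-\Pi^>)X_{Z_>}$ bound from the fact that R2 monomials keep two high-mode factors after low-mode differentiation, which the paper phrases as an appeal to the cutting lemma.
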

	
	\begin{proof}
		First, we set $k=d$ in Iteration Lemma \ref{iter}, the Hamiltonian comes to $H'=H_0+Z_{d}+R_{d,d}+R_{d,>}$ with $$|R_{d,d}|_{r_k,s}\leq r^{d-2}(C_{\mathtt{rema}}dN)^{C_{\mathtt{exp}}d^{p+1}},|R_{d,>}|\leq C_R\frac{r}{\e^{(s-s_0)f(N)}}.$$ 
		Next, we will adjust the parameters in the estimate of $R_{d,d}$ and $R_{d,>}$ to make them have equally order small, combining them into a single remainder term. When $d>D_{\mathtt{fin}},s>S_{\mathtt{fin}},rC_R<1,d'=\frac{d}{2}$, the above estimate simplifies
		\begin{align*}
			|R_{d,d}|_{r_k,s}&\leq r^{d-2}(C_{\mathtt{rema}}dN)^{C_{\mathtt{exp}}d^{p+1}}\\
			&\leq r^{\frac{d}{2}}(C_{\mathtt{rema}}dN)^{C_{\mathtt{exp}}d^{p+1}}\\
			&\leq r^{d'}(2C_{\mathtt{rema}}d'N)^{2^{p+1}C_{\mathtt{exp}}d'^{p+1}}\\
			&\leq r^{d'}(d'N)^{2^{p+2}C_{\mathtt{exp}}d'^{p+1}}\\
			&\leq r^{2^{p+2}C_{\mathtt{exp}}d'}(d'N)^{2^{p+2}C_{\mathtt{exp}}d'^{p+1}}.
		\end{align*}
		At the same time,
		$$|R_{d,>}|\leq \e^{-2^{p+2}C_{\mathtt{exp}}f(N)}.$$ 
		For the sake of simplicity, we continue to denote $r',d'$ by $r,d$. Now we impose the condition 
		$$(r(dN)^{d^p})^d=\e^{-f(N)},$$
		namely
		\begin{equation}\label{order}
			d^p\ln(dN)+\ln r=\frac{-f(N)}{d}.
		\end{equation}
		Making the order be same, we let 
		$$d^p\ln(dN)=\frac{f(N)}{d}.$$
		Then specifying $f(x)$, we can derive the dependency of $N$ on $d$ and substitute it back to \eqref{order} to determine its dependency on $r$. Therefore the condition of Lemma \ref{iter} reduces to requiring $r$ sufficiently small.

		Finally because of cutting lemma and definition of $Z_>$, $Z_>$ has the same order with $R_{d}$.
		
	\end{proof}

	We now present the order of the remainder in Normal Form Lemma with respect to $r$ under two representative cases of $f(x)$ as described in the following propositions.

	\begin{proposition}
		When $f(x)=x^\theta,$ with $\theta<1,\e^{f(N)}$ has higher order than $\exp(|\ln r|^{1+a})$ for $1-ap>0$. Specifically, when $p=1$, $\e^{f(N)}$ is asymptotic to $\e^{C\frac{|\ln r|^2}{\ln|\ln r|}}.$ \\
		
	\end{proposition}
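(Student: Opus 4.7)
The plan is to carry out the elementary asymptotic analysis suggested by the two balance equations obtained inside the proof of Theorem~\ref{Normal}, specialised to the Gevrey weight $f(x)=x^\theta$.

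First, I would recall from the proof of Theorem~\ref{Normal} the two relations that must be satisfied: the balance condition $d^p\ln(dN)=f(N)/d$ (equating the two remainder orders) and the order identity \eqref{order}, namely $d^p\ln(dN)+\ln r=-f(N)/d$. Subtracting these gives $|\ln r|=2f(N)/d$. Substituting $f(N)=N^\theta$ yields the system
$$d^{p+1}\ln(dN)=N^\theta,\qquad |\ln r|\sim\frac{2N^\theta}{d}.$$

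Next, I would solve the first equation for $N$ as a function of $d$. With the a posteriori verified ansatz $\ln(dN)\sim\ln N$, this forces $N\asymp d^{(p+1)/\theta}$ modulo logarithmic factors, and indeed $\ln N\sim\frac{p+1}{\theta}\ln d$, which confirms the ansatz. Feeding this back into the second equation gives $|\ln r|\sim d^p\ln d$. Taking one further logarithm produces $\ln|\ln r|\sim p\ln d$, so that
$$d\sim\left(\frac{|\ln r|}{\ln|\ln r|}\right)^{1/p}.$$

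Finally, I would compute
$$f(N)=N^\theta=d^{p+1}\ln(dN)\sim d\cdot|\ln r|\sim\frac{|\ln r|^{1+1/p}}{(\ln|\ln r|)^{1/p}}.$$
To compare with $|\ln r|^{1+a}$, take the ratio: $|\ln r|^{1/p-a}/(\ln|\ln r|)^{1/p}$, which diverges precisely when $1/p-a>0$, that is, when $1-ap>0$. When $p=1$ the expression for $f(N)$ reduces to $|\ln r|^2/\ln|\ln r|$ up to a constant, which gives the stated asymptotic $\e^{f(N)}\sim\exp(C|\ln r|^2/\ln|\ln r|)$.

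The main obstacle is the bookkeeping of the logarithmic corrections when inverting the implicit transcendental equations, and in particular the a posteriori verification $\ln(dN)\sim\ln N$ in the relevant regime. Because $N$ grows as a power of $d$ with strictly positive exponent $(p+1)/\theta$, we have $\ln(dN)=\ln N+\ln d\sim\ln N$, and self-consistency is clean; however, the rigorous write-up must make this explicit to avoid circular reasoning, and the iterated inversion steps (first $N$ from $d$, then $d$ from $r$) should be stated with explicit two-sided bounds rather than only the leading scaling.
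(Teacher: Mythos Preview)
Your argument is correct and arrives at the same conclusion, but the route differs from the paper's. The paper solves the transcendental balance equation $d^{p+1}\ln(dN)=N^\theta$ \emph{exactly} via the Lambert $W$ function: after the substitution $d'=d^\theta$, $N'=N^\theta$, $e^D=d'N'$, one obtains $-\theta d^{-(\theta+p+1)}=-De^{-D}$ and hence $D=-W_{-1}(-\theta d^{-(\theta+p+1)})$, which yields closed formulas for $N$ and $|\ln r|$ in terms of $d$ and $W_{-1}$; the asymptotics are then read off from the two-sided bound $\ln(-1/y)<-W_{-1}(y)<2\ln(-1/y)$ (Lemma~\ref{Lam}). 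Your approach instead short-circuits Lambert $W$ by the bootstrap ansatz $\ln(dN)\asymp\ln N\asymp\ln d$, giving $N^\theta\asymp d^{p+1}\ln d$, $|\ln r|\asymp d^p\ln d$, and hence $f(N)=N^\theta=\tfrac{d}{2}|\ln r|\asymp |\ln r|^{1+1/p}/(\ln|\ln r|)^{1/p}$ directly. This is more elementary and arguably more transparent for the order comparison; the paper's Lambert-$W$ computation, on the other hand, delivers explicit formulas (and in principle sharper control of constants and lower-order terms), and makes the second claim for $p=1$ an equality rather than a two-sided estimate. One small point: your ansatz $\ln(dN)\sim\ln N$ holds only in the sense $\asymp$ (the ratio tends to the constant $(\theta+p+1)/(p+1)\neq1$), so in the write-up you should state it as a two-sided bound rather than asymptotic equivalence, exactly as you note in your last paragraph.
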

	\begin{proof}
		When $f(x)=x^\theta,$ we proceed with the following calculations
		\begin{align*}
			d^{p}(\ln dN)&=\frac{N^\theta}{d}=\frac{|\ln r|}{2},  \\
			d^{p+\theta+1}(\ln dN)&=d^\theta N^\theta,N':=N^\theta,d':=d^\theta,\\
			\frac{1}{\theta}d'^{\frac{\theta+p+1}{\theta}}\ln(d'N')&=N'd':=e^D,\\
			\frac{1}{\theta}d'^{\frac{\theta+p+1}{\theta}}D&=e^{D},\\
			-\theta d'^{-\frac{\theta+p+1}{\theta}}&=-De^{-D},\\
			\theta\ln Nd=\ln N'd'=D&=-W_{-1}(\frac{-\theta}{d^{p+\theta+1}}),\\
			N&=d^{-1}e^{-\frac{1}{\theta}W_{-1}(-\theta d^{-(\theta+p+1)})},\\
			|\ln r|&=2d^{-1-\theta}e^{-W_{-1}(-\theta d^{-(\theta+p+1)})},\\
			\ln|\ln r|&=-(1+\theta)\ln d-W_{-1}(-\theta d^{-(\theta+p+1)}).
		\end{align*}	
		We first use $|\ln r|^{1+a}$ to  probe the order of $f(N)$, and in the calculation we omit the multiplicative constant:
		\begin{align*}
			\frac{N^\theta}{|\ln r|^{1+a}}&=d^{a\theta+a+1}e^{aW_{-1}(-\theta d^{-(\theta+p+1)})}\\
			&=d^{a\theta+a+1}(\frac{\theta d^{-(\theta+p+1)}}{-W_{-1}(-\theta d^{-(\theta+p+1)})})^{a}\\
			&=d^{1-ap}(-W_{-1}(-\theta d^{-(\theta+p+1)}))^{-a}\\
			\text{(same order to)}	&= \frac{d^{1-ap}}{((\theta+p+1)\ln d)^a}.
		\end{align*}
		The last line is from Lemma \ref{Lam}. So we have $e^{f(N)}\gg e^{|\ln r|^{1+a}},$ when $1-ap>0.$ When $p=1,a=1,$ we calculate
		\begin{align*}
			\frac{N^{\theta}\ln|\ln r|}{|\ln r|^2}&=\frac{-(1+\theta)\ln d-W_{-1}(-\theta d^{-(\theta+p+1)})}{\ln d}\\
			&=-(1+\theta)-\frac{W_{-1}(-\theta d^{-(\theta+p+1)})}{\ln d}.
		\end{align*}
		By Lemma \ref{Lam}, we can draw that $N^\theta$ has the same order to $\frac{|\ln r|^2}{\ln|\ln r|}.$
		
	\end{proof}
	
	\begin{proposition}
		When $f(x)=(\ln (x+\kappa))^q,$ with $q>1,e^{f(N)}$ grows faster than $e^{|\ln r|^{1+a}}$ for $a\leq\frac{q-1}{qp+1}$.
	\end{proposition}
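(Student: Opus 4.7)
The plan is to mimic the asymptotic calculation used in the preceding Gevrey-case proposition. The starting point is the pair of balance relations
$$d^{p}\ln(dN) \;=\; \frac{f(N)}{d}, \qquad d^{p}\ln(dN)+\ln r \;=\; -\frac{f(N)}{d},$$
recorded in the proof of Theorem~\ref{Normal}. Specializing to $f(x)=(\ln(x+\kappa))^q$, the first balance becomes
$$d^{\,p+1}\ln(dN) \;=\; \bigl(\ln(N+\kappa)\bigr)^{q}.$$
First, I would solve this implicit relation for $N$ in terms of $d$ asymptotically. Setting $L:=\ln N$ and using $\ln(N+\kappa)=L+o(1)$ together with $\ln(dN)=L+\ln d$, the relation reduces, up to lower-order terms, to $d^{\,p+1}L\sim L^{q}$, giving
$$L \;\sim\; d^{\,\frac{p+1}{q-1}}, \qquad f(N)=L^{q}\bigl(1+o(1)\bigr) \;\sim\; d^{\,\frac{q(p+1)}{q-1}}.$$

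Second, combining the two balance relations gives $|\ln r|\sim 2 f(N)/d$, so
$$|\ln r| \;\sim\; d^{\,\frac{q(p+1)}{q-1}-1} \;=\; d^{\,\frac{qp+1}{q-1}},$$
which inverts to $d\sim|\ln r|^{(q-1)/(qp+1)}$. Substituting this back yields
$$f(N) \;\sim\; \tfrac{1}{2}\,d\,|\ln r| \;\sim\; |\ln r|^{\,1+\frac{q-1}{qp+1}},$$
so that $\e^{f(N)}$ is of order at least $\exp\!\bigl(|\ln r|^{\,1+a}\bigr)$ whenever $a\leq\frac{q-1}{qp+1}$, which is precisely the claim.

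The only delicate point is justifying the asymptotic inversion of the transcendental relation $d^{\,p+1}(L+\ln d)=(L+o(1))^{q}$: one has to verify that $L\gg\ln d$ and that the fixed constant $\kappa$ inside the outer logarithm is absorbed into $o(1)$. Because $q>1$ forces the exponent $(p+1)/(q-1)$ to be strictly positive, $L$ grows polynomially in $d$ and thus dominates the correction $\ln d$, so both approximations are valid. Unlike the Gevrey case, no Lambert-$W$ bookkeeping is required here, since after taking logarithms the core balance $d^{\,p+1}L=L^{q}$ is algebraic in $L$; this explicit solvability is the essential simplification over the proof of the preceding proposition, and the rest of the argument reduces to a straightforward tracking of lower-order terms.
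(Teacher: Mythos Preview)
Your proposal is correct and follows essentially the same approach as the paper: both start from the balance relations in the proof of Theorem~\ref{Normal}, extract the leading power $L=\ln N\asymp d^{(p+1)/(q-1)}$ from $d^{p+1}\ln(dN)=(\ln N)^q$, and then compare $f(N)$ against $|\ln r|^{1+a}$. The only cosmetic difference is that the paper works with explicit two-sided bounds $d^{(p+1)/(q-1)}\le \ln N\le d^{(p+1)/(q-1)}(\ln d)^{1/(q-1)}$ and computes the ratio $f(N)/|\ln r|^{1+a}$ directly, whereas you invert to express $d$ in terms of $|\ln r|$ and substitute; either route yields the same threshold $a=\frac{q-1}{qp+1}$.
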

	\begin{proof}
		When $f(x)=(\ln (x+\kappa))^q,$ we omit the constant $\kappa$ here and proceed with following calculations  
		\begin{align*}
			d^p(\ln dN)&=\frac{(\ln N)^q}{d}=\frac{|\ln r|}{2},\\
			d^{p+1} \ln N\leq d^{p+1}(\ln dN)&=(\ln N)^q\leq d^{p+1}\ln d\ln N,\\
			d^{\frac{p+1}{q-1}}&\leq\ln N\leq d^{\frac{p+1}{q-1}}(\ln d)^{\frac{1}{q-1}}.
		\end{align*}
		We then calculate 
		\begin{align*}
			\frac{(\ln N)^q}{|\ln r|^{1+a}}&=\frac{(\ln N)^q}{|\ln r|}\frac{1}{|\ln r|^a}=\frac{d^{1+a}}{(\ln N)^{qa}},    \\
			d^{1+a-qa\frac{p+1}{q-1}}(\ln d)^{-\frac{1}{q-1}}	&\leq\frac{d^{1+a}}{(\ln N)^{qa}}\leq d^{1+a-qa\frac{p+1}{q-1}}.
		\end{align*}
		Thus, for $1+a-qa\frac{p+1}{q-1}>0,e^{f(N)}$ has higher order than $\exp(|\ln r|^{1+a})$. Specifically, when $p=1,$ we get $a<\frac{q-1}{q+1}.$ 
	\end{proof}

	\section{Stability Time}\label{sec:6}
	To use the Normal Form Lemma, we set $w=\mathcal{T}^{(d)}(u),w_0=\mathcal{T}^{(d)}(u_0),$ and we consider the Cauchy problem
	\begin{equation}\label{Cau}
		\dot{w}=X_{H^{(d)}}(w),w(0)=w_0.
	\end{equation}
	Let $z(t)$ be the solution of \eqref{Cau} and define
	$$T_r:=\sup\{|t|\in\mathbb{R}^+\mid\Vert w\Vert_s\leq 2r\}$$
	as the escape time of the solution from the ball of radius $R$. Next, we split the normal form as $Z_d=Z_0+Z_>$, as stated in Theorem \ref{Normal}. We obtain the following system of equations:
	\begin{align}
		\dot{w}^<&=\Lambda w^<+X_{Z_0}(w^<)+\Pi^<X_{Z_>}(w^<,w^>)+\Pi^<X_{R_d}(w^<,w^>),\label{low}  \\
		\dot{w}^>&=\Lambda w^>+\Pi^>X_{Z_>}(w^<,w^>)+\Pi^>X_{R_d}(w^<,w^>).\label{high}
	\end{align}
	We first give a standard priori estimate on the low frequency part $w^<$ of the solution of \eqref{Cau} based on \eqref{low}. 
	\begin{proposition}
		For $s>s_0$, and any real $w_0$ with $\Vert w_0\Vert_{s}<r $ in \eqref{Cau}, we have
		$$\Vert w^<(t)\Vert_s\leq\Vert w^>(0)\Vert_{s}+e^{-C_{\mathtt{fin}}f(N(r))}|t|, \forall|t|\leq T_r.$$
	\end{proposition}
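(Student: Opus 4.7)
The plan is to bound the three terms driving $\|w^>(t)\|_s$ through the high-mode equation \eqref{high} and integrate from $0$ to $t$. Differentiating,
$$\frac{d}{dt}\|w^>(t)\|_s = 2\operatorname{Re}\sum_{|J|>N}\overline{u_J(t)}\,\dot u_J(t)\,\e^{2sf(\langle j\rangle)},$$
and substituting $\dot u = \Lambda w^> + \Pi^>X_{Z_>}(w) + \Pi^>X_{R_d}(w)$ from \eqref{high} splits the right-hand side into three pieces. The linear piece $\Lambda w^>$ contributes a pure imaginary summand $-\sigma\,\i\,\omega_j|u_J|^2$ for each $J$, so its real part vanishes identically: the $H_0$-flow is diagonal and preserves $|u_J|$ mode by mode.

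For $\Pi^>X_{R_d}$, Theorem \ref{Normal} gives $|R_d|_{r,s}\le\e^{-C_{\mathtt{fin}}f(N(r))}$. Combined with the definition of $|\cdot|_{r,s}$, this yields $\sup_{w\in B_s(2r)}\|X_{R_d}(w)\|_s\lesssim r\,\e^{-C_{\mathtt{fin}}f(N(r))}$ after a harmless rescaling of $r$. A Cauchy--Schwarz estimate in the weighted inner product $\langle u,v\rangle:=\sum_J\overline{u_J}v_J\,\e^{2sf(\langle j\rangle)}$ then controls this contribution to $\frac{d}{dt}\|w^>\|_s$ by a quantity of the same exponential order, after absorbing harmless constants into $C_{\mathtt{fin}}$.

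The essential step is the $\Pi^>X_{Z_>}$ contribution, since $X_{Z_>}$ itself is not small on high modes. I would exploit the structure of $Z_>$ given by the R2 case of the normal form: each monomial has exactly two high indices $J_1,J_2\in\Omega_\alpha$ with $\sigma_1\sigma_2=-1$. A direct Hamilton's equations computation for such a monomial $c\,u_{J_1}u_{J_2}M_{\text{low}}+\text{c.c.}$ gives $\frac{d}{dt}(|u_{J_1}|^2+|u_{J_2}|^2)|_{Z_>}=0$, so every block action $I_\alpha^>:=\sum_{J\in\Omega_\alpha,\,|J|>N}|u_J|^2$ is strictly conserved along the $Z_>$-flow. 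Replacing $\|w^>\|_s$ by the block-constant weighted norm $\sum_\alpha\e^{2sf_\alpha}I_\alpha^>$ (one representative $f_\alpha$ per block) is therefore permissible along $Z_>$, and the discrepancy between the two norms on $B_s(2r)$ is controlled by the oscillation of $f$ on blocks of diameter at most $C_1$ (Assumption A.3.2). For the two weight classes $f(x)=x^\theta$ and $f(x)=(\ln x)^q$, this oscillation for $|J|>N$ is much smaller than $\e^{-C_{\mathtt{fin}}f(N(r))}$ once $N=N(r)$ is chosen through the Normal Form relation, so the $Z_>$-contribution is again bounded by $\e^{-C_{\mathtt{fin}}f(N(r))}$.

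Summing the three pieces gives $\frac{d}{dt}\|w^>(t)\|_s\le\e^{-C_{\mathtt{fin}}f(N(r))}$ on $|t|\le T_r$, during which the solution stays in $B_s(2r)$ by definition of $T_r$; integrating over $[0,|t|]$ yields the claim. The main obstacle is the third step: $\Pi^>X_{Z_>}$ has no direct smallness, so the bound must be extracted from the Hamiltonian commutativity of $Z_>$ with block actions, combined with the tameness of $f$ across blocks, rather than from any pointwise estimate of the vector field itself.
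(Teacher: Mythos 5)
Your argument never touches the quantity the proposition is actually about. The statement bounds $\Vert w^<(t)\Vert_s$, the \emph{low}-frequency part, and the relevant evolution equation is \eqref{low}; the entire proposal is a differential inequality for $\Vert w^>(t)\Vert_s$ built from \eqref{high}. (The appearance of $\Vert w^>(0)\Vert_s$ on the right-hand side of the proposition is almost certainly a typo for $\Vert w^<(0)\Vert_s$, which may be what misled you, but the left-hand side is unambiguously $\Vert w^<\Vert_s$, and the paper immediately follows this proposition with a separate one for $\Vert w^>\Vert_s$.) The paper's proof of the $w^<$ bound is short and does not need block actions at all: one writes $\frac{\d}{\d t}\Vert w^<\Vert_s^2 = \{\Vert w^<\Vert_s^2, Z_0\} + \{\Vert w^<\Vert_s^2, Z_>\} + \{\Vert w^<\Vert_s^2, R_d\}$, the $Z_0$-bracket vanishes because $\{|w_{(j,+)}|^2,Z_0\}=0$ (the R0 normal form preserves every low-mode action), and the remaining two brackets are each $O(\e^{-C_{\mathtt{fin}}f(N(r))})$ because Theorem \ref{Normal} controls $|R_d|_{2r,s}$ and, separately, the low-mode component $(\mathrm{Id}-\Pi^>)X_{Z_>}$.

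Even read as a blind attempt at the next proposition (the $w^>$ bound), the third step has a genuine gap. Conservation of the block actions $I_\alpha^> = \sum_{J\in\Omega_\alpha}|w_J|^2$ does let you replace $\Vert w^>\Vert_s^2$ by the block-constant comparison norm $\sum_\alpha \e^{2sf_\alpha} I_\alpha^>$, but the discrepancy between the two is a \emph{multiplicative} factor (roughly $\e^{2sC_f f(C_1)} = 1/C_{\mathtt{sta}}$, coming from $f(|\alpha|+C_1)\le f(|\alpha|)+C_f f(C_1)$), not an additive one, and the relative weight oscillation $\e^{2sf(n+C_1)}/\e^{2sf(n)} - 1$ is only algebraically small in $N$ (e.g.\ $\sim N^{\theta-1}$ for $f(x)=x^\theta$), nowhere near the exponentially small scale $\e^{-C_{\mathtt{fin}}f(N(r))}$ you claim. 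Consequently the asserted pointwise bound $\frac{d}{dt}\Vert w^>\Vert_s \le \e^{-C_{\mathtt{fin}}f(N(r))}$ does not follow; the paper instead conjugates out the block-diagonal flow $\mathcal{W}(t,\tau)$, proves $\Vert\mathcal{W}(t,\tau)\Vert_{s\to s}\le 1/C_{\mathtt{sta}}$ via the block-action argument, and then applies Duhamel, yielding the multiplicative constant $1/C_{\mathtt{sta}}$ in front of both terms rather than a purely additive small correction.
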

	\begin{proof}
	 Since $\{|w_{(j,+)}|^2,Z_0\}=0,$ we have
	 \begin{align*}
	 	\frac{\d}{\d t
	 	}\Vert w^<\Vert_s^2&=\{\Vert w^<\Vert_s^2,Z_2\}+\{\Vert w^<\Vert_s^2,R_d\}\\
	 	&=\sum_{J\in\mathcal{Z}}\frac{\partial}{\partial u_{J}}(\Vert w^<\Vert_s^2)\cdot(X_{Z_>}(w^<,w^>)+X_{R_d}(w^<,w^>))\\
	 	&\leq|Z_>|_{2r,s}+|R_d|_{2r,s}=\e^{-C_{\mathtt{fin}}f(N(r))}.
	 \end{align*}
	 The last inequality follows from the definition of norm $|\cdot|_{r,s}.$
	\end{proof}
	We now proceed the estimate for the high mode.
	\begin{proposition}
		For $s>s_0$ and any real $w_0$ with $\Vert w_0\Vert_{s}<r\leq C_{\mathtt{thre}}$ in \eqref{Cau}, we have
		$$\Vert w^>(t)\Vert_s\leq\frac{1}{C_{\mathtt{sta}}}(\Vert w^>(0)\Vert_{s}+e^{-C_{\mathtt{fin}}f(N(r))}|t )|, \forall|t|\leq T_r.$$
	\end{proposition}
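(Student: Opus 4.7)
My plan is to differentiate $\Vert w^>(t)\Vert_s^2$ along the flow of $H^{(d)}=H_0+Z_0+Z_>+R_d$ and show that only the exponentially small remainder $R_d$ produces a net contribution. Schematically,
\[
\frac{\d}{\d t}\Vert w^>\Vert_s^2 = \{\Vert w^>\Vert_s^2, H_0\} + \{\Vert w^>\Vert_s^2, Z_0\} + \{\Vert w^>\Vert_s^2, Z_>\} + \{\Vert w^>\Vert_s^2, R_d\}.
\]
The first two brackets vanish for elementary reasons: $H_0$ is diagonal in the actions $|u_J|^2$, and $Z_0$ involves only low-mode factors, hence Poisson-commutes with every high-mode action $|u_J|^2$ for $|J|>N$.

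The heart of the argument is the $Z_>$ term. By the definition of the $N$-cutting normal form, each monomial of $Z_>$ carries exactly two high-mode factors $u_{J_1}u_{J_2}$ with $J_1,J_2$ lying in the same block $\Omega_\alpha$ and $\sigma_1\sigma_2=-1$. I would replace $\Vert\cdot\Vert_s$ on the high modes by the block-adapted norm obtained by substituting the weight $\e^{2sf(\langle j\rangle)}$ by a block-constant weight $\e^{2sf(|J_\alpha^*|)}$ at a fixed representative $J_\alpha^*\in\Omega_\alpha$. Assumption A.3(2), combined with the monotonicity of $f$, bounds the intra-block variation of $f(\langle j\rangle)$ uniformly in $\alpha$, so the two norms are equivalent with an absolute constant $C_{\mathrm{blk}}$. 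A direct computation of $\{|u_J|^2,u^\mathcal{J}\}$ then shows that, for each monomial of $Z_>$, the block-adapted Poisson bracket picks up a factor $\sigma_1+\sigma_2=0$ and vanishes identically; the block-adapted norm of $w^>$ is \emph{exactly conserved} by the flow of $Z_>$.

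It remains to handle the contribution of $R_d$. The Normal Form Lemma gives $|R_d|_{2r,s}\le \e^{-C_{\mathtt{fin}}f(N(r))}$, and the definition of $|\cdot|_{2r,s}$ yields $\Vert X_{R_d}(w)\Vert_s\le 2r\,\e^{-C_{\mathtt{fin}}f(N(r))}$ on $B_s(2r)$. For $|t|\le T_r$ the trajectory stays in $B_s(2r)$, so the equivalence reduces the analysis to
\[
\frac{\d}{\d t}\Vert w^>\Vert_s\le C\,\e^{-C_{\mathtt{fin}}f(N(r))},
\]
which integrates to $\Vert w^>(0)\Vert_s + C\,\e^{-C_{\mathtt{fin}}f(N(r))}|t|$. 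Absorbing $r$ and $C_{\mathrm{blk}}$ into a redefinition of the constants $C_{\mathtt{fin}}$ and $C_{\mathtt{sta}}$ delivers the stated bound.

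The main obstacle is the construction and exact conservation of the block-adapted norm under $Z_>$. The normal-form structure (exactly two high modes, same block, opposite signs) is tailored so that the cancellation $\sigma_1+\sigma_2=0$ kills the Poisson bracket, but this cancellation is clean only once the weight has been equalized across each block; that is why A.3(2) is essential. Once this structural step is in place, the remainder estimate and a routine Gr\"onwall-style integration deliver the conclusion.
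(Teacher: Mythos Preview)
Your proposal is correct and follows essentially the same approach as the paper: both exploit that the block-adapted norm (the $\ell^2$ norm on each $\Omega_\alpha$, reweighted by a block-constant factor) is exactly conserved by the flow of $H_0+Z_0+Z_>$ thanks to the $\sigma_1+\sigma_2=0$ structure of the R2 monomials, and that this norm is equivalent to $\Vert\cdot\Vert_s$ on high modes via A.3(2) and A.0(3), with equivalence constant $C_{\mathtt{sta}}$. The paper packages the same idea through a linear flow map $\mathcal{W}(t,\tau)$ and Duhamel's formula rather than your direct Poisson-bracket/Gr\"onwall computation, but the structural content is identical.
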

	\begin{proof}
		First, we denote by $\mathcal{L}(w^<):\Pi^>W_s\to\Pi^>W_s$ the family of linear operator such that $X_{Z_2}(w^<,w^>)=\mathcal{L}(w^<)w^>$, and denote $\mathcal{L}(t):=\mathcal{L}(w^<(t)).$ 
		
		Then for any $w^>\in\Pi^>W_s,$ we introduce the projectors defined as follows:
		$$\Pi_\alpha:\Pi^>W_s\to\Pi^>W_s\ ,  (w_{(j,\sigma)})_{(j,\sigma)}\mapsto (w_{(j,\sigma)} \chi_{\Omega_{\alpha}}(j))_{(j,\sigma)},$$ 
		where $\chi_{\Omega_{\alpha}}$ is indicator function on $\Omega_{\alpha}.$
		Then we can split $w$ as follows:
		$$\forall w\in\Pi^>W_s,\ w=\sum_{\alpha}w_\alpha,\ w_\alpha:=\Pi_{\alpha}w.$$
		Similarly, by the definition of case NR2, $\mathcal{L}(t)$ has a block-diagonal structure, namely it can be written as
		$$\mathcal{L}(t)=\sum_{\alpha}\mathcal{L}_{\alpha}(t),\ \mathcal{L}_\alpha(t)=\Pi_\alpha\mathcal{L}(t)\Pi_{\alpha}.$$
		For any block $\Omega_{\alpha}$ we define 
		$|\alpha|=\inf_{j\in\Omega_{\alpha}}|j|$. 
		Consider the normal form part of \eqref{high}, namely
		\begin{equation}\label{normeq}
			\partial_t w_{\alpha}(t)=\Lambda w_\alpha+\mathcal{L}_\alpha(t)z_{\alpha}(t).
		\end{equation}
		Since $\mathcal{L}_\alpha$ is Hamiltonian, we have
		$$\Vert w_\alpha(t)\Vert_{\ell^2}=\Vert w_\alpha(t_0)\Vert_{\ell^2}, \forall t,t_0\in [-T_r,T_r],$$
		therefore, $\forall |t|\leq T_r$		    
		\begin{align*}
			\Vert w(t)\Vert_s&=\sum_{\alpha}\sum_{j\in\Omega_{\alpha}}\e^{2sf(\langle j\rangle)}|w_{(j,\sigma)}(t)|^2 \\
			&\leq \sum_{\alpha}\sum_{j\in\Omega_{\alpha}}\e^{2sf(|\alpha|+C_1)}|w_{(j,\sigma)}(t)|^2 \\
			&\leq \sum_{\alpha}\e^{2s(f(|\alpha|)+C_f f(C_1))}\Vert w_{\alpha}(t)\Vert_{\ell_2}^2 \\
			&={C_{\mathtt{sta}}} \sum_{\alpha}\e^{2sf(|\alpha|)}\Vert w_{\alpha}(0)\Vert_{\ell_2}^2 \\
			&\leq {C_{\mathtt{sta}}}\sum_{\alpha}\sum_{j\in\Omega_{\alpha}}\e^{2sf(|j|)}|w_{(j,\sigma)}(0)|^2 \\
			&={C_{\mathtt{sta}}}\Vert w(0)\Vert_{s}.
		\end{align*}
		Hence, denoting by $\mathcal{W}(t,\tau)$ is the flow map of \eqref{normeq}, we have 
		$$\Vert\mathcal{W}(t,\tau)w_0\Vert_s\leq{C_{\mathtt{sta}}}\Vert w(0)\Vert_{s}.$$
		Now we can solve \eqref{high} as
		$$w^>(t)=\mathcal{W}(t,0)w_0+\int_{0}^{t}\mathcal{W}(t,\tau)\Pi^>X_{R_d}(w^<,w^>)d\tau.$$
		So we get 
		$$\Vert w^>(t)\Vert_s\leq{C_{\mathtt{sta}}}\Vert w_0\Vert_{s'}+{C_{\mathtt{sta}}}\e^{-C_{\mathtt{fin}}f(N)}|t|.$$

	\end{proof}
	
	We now combine the estimates for the low and high modes and apply a standard bootstrap argument to obtain the following result:
	\begin{theorem}[Main theorem]
		Consider Hamiltonian \eqref{Hamiltonian} with initial $u(0)=u_0$. Assume that $W_s$'s weight function $f$ satisfies assumption A.0, frequencies $\omega_{j}$ fulfill \textbf{Assumption 2} with $\beta>1$. Then for sufficiently large $s$, there exist a  threshold $\varepsilon_0>0, $ and constants $C_{\mathtt{sta}},C_{\mathtt{fin}}>0$ such that the following holds: if $u(0)$ is real and
		$$\varepsilon:=\Vert u(0)\Vert_{s}<\varepsilon_0,$$
		then $$\sup_{|t|\leq T_{\varepsilon}}\Vert u(t)\Vert_{s}<{C_{\mathtt{sta}}\varepsilon},$$
		where
		$$T_{\varepsilon}>\frac{\e^{C_{\mathtt{fin}}sf(N(\varepsilon))}}{C_{\mathtt{sta}}}.$$ 
		The explicit relation for $N(\varepsilon)$ is given in Theorem 1.
	\end{theorem}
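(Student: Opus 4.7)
The plan is to apply the Normal Form Lemma once with radius comparable to $\varepsilon$, convert the two propositions into a single estimate on $\|w(t)\|_s$, close a continuity/bootstrap argument to produce the claimed time, and then pull back to the original coordinates via the near-identity property of $\mathcal{T}_d$.

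First I would choose the working radius. Given $\varepsilon:=\|u(0)\|_s<\varepsilon_0$, set $r:=K\varepsilon$ for a fixed constant $K$ (say $K=1$ up to harmless factors) small enough that Theorem \ref{Normal} applies; this furnishes the canonical map $\mathcal{T}_d:W_s(r)\to W_s(2r)$ and the decomposition $H^{(d)}=H_0+Z_0+Z_>+R_d$ with $|R_d|_{r,s}\le e^{-C_{\mathtt{fin}}f(N(r))}$ and $\sup_{B_s(r)}\|(Id-\Pi^>)X_{Z_>}\|_s\le e^{-C_{\mathtt{fin}}f(N(r))}$. Because $\mathcal{T}_d$ is near-identity on $B_s(r)$, the transformed initial datum $w_0:=\mathcal{T}_d(u_0)$ satisfies $\|w_0\|_s\le 2\varepsilon<r$, so both propositions are available.

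Next I would set up the bootstrap. Define the escape time
$$T_r:=\sup\{t\ge 0:\|w(\tau)\|_s\le 2r\text{ for all }|\tau|\le t\}.$$
For $|t|\le T_r$ the two propositions give
\begin{align*}
\|w^<(t)\|_s&\le \|w_0\|_s+e^{-C_{\mathtt{fin}}f(N(r))}|t|,\\
\|w^>(t)\|_s&\le C_{\mathtt{sta}}\bigl(\|w_0\|_s+e^{-C_{\mathtt{fin}}f(N(r))}|t|\bigr),
\end{align*}
whence $\|w(t)\|_s\le 2C_{\mathtt{sta}}(\varepsilon+e^{-C_{\mathtt{fin}}f(N(r))}|t|)$. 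Enforcing the bootstrap condition $\|w(t)\|_s<2r$ then reduces to $e^{-C_{\mathtt{fin}}f(N(r))}|t|\le \varepsilon$, i.e.\ $|t|\le \varepsilon\, e^{C_{\mathtt{fin}}f(N(r))}$. A standard continuity argument (if $T_r$ were strictly less than this bound, the $<$ would persist on a small neighborhood beyond $T_r$, contradicting maximality) promotes this to $T_r\ge \varepsilon\,e^{C_{\mathtt{fin}}f(N(r))}/C_{\mathtt{sta}}$, which absorbs into the asserted $T_\varepsilon\gtrsim e^{C_{\mathtt{fin}}sf(N(\varepsilon))}/C_{\mathtt{sta}}$ up to adjusting $C_{\mathtt{fin}}$ (the subexponential growth of $f(N(\varepsilon))$ dominates the polynomial prefactor in $\varepsilon$).

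Finally I would return to the original variables. Since $\mathcal{T}_d^{-1}:W_s(2r)\to W_s(r)$ and both maps are near-identity, we have $\|u(t)\|_s\le 2\|w(t)\|_s\le 4C_{\mathtt{sta}}\varepsilon$ for all $|t|\le T_r$, which after relabeling $C_{\mathtt{sta}}$ is exactly the stated conclusion; the explicit dependence $N(\varepsilon)$ is inherited from the relation fixed in the proof of Theorem \ref{Normal}. The only genuinely delicate step is closing the bootstrap with the correct power of $\varepsilon$ in front of the exponential, so that the stability radius $2r$ is consistent with the error from $R_d$; everything else is bookkeeping of the near-identity constants of $\mathcal{T}_d$ and $\mathcal{T}_d^{-1}$.
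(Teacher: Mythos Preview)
Your proposal is correct and follows exactly the route the paper takes: the paper itself gives no detailed proof of the Main theorem, merely stating that one ``combine[s] the estimates for the low and high modes and apply[s] a standard bootstrap argument,'' and your write-up supplies precisely that bootstrap (choice of $r\sim\varepsilon$, definition of the escape time $T_r$, combination of the two Propositions into a bound on $\|w(t)\|_s$, continuity argument, and pullback via the near-identity $\mathcal{T}_d$). The only cosmetic point is to fix $K$ explicitly so that $\|w_0\|_s<r$ and $2C_{\mathtt{sta}}(\varepsilon+\varepsilon)<2r$ are simultaneously consistent (e.g.\ $K\gtrsim C_{\mathtt{sta}}$), but you already flag this as bookkeeping of harmless constants.
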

	
	Based on this theorem, we use Lemma \ref{weight} to verify that $f(x)=x^{\theta}$ and $f(x)=\ln(x+\kappa)^q$ satisfy Assumption 1.  Then the main results of this paper, Theorem \ref{res1} and Theorem \ref{res2}, are proved.

	\section{Applications}\label{sec:app}
	In this section, we can specifically illustrate how our results improve previous ones, highlight the generalizations of this framework, and derive some new findings.
	
	\subsection{Schr\"odinger Equations with Convolution Potentials}
	We consider the classic Schr\"odinger equation \eqref{ex1} of the form
	\[\i\partial_t \psi=-\Delta\psi+V*\psi+p(|\psi|^2)\psi, x\in\mathbb{T}^{\mathsf{d}},\]
	where $V$ is a potential, $*$ denotes the convolution and the nonlinearity $p$ is in $C^\infty(\mathbb{R},\mathbb{R})$ and $p(0)=0$.
	Equation \eqref{ex1} is Hamiltonian with the Hamiltonian function
	$$H(\psi,\bar{\psi})=\int_{\mathbb{T}^{\mathsf{d}}}(|\nabla\psi|^2+\psi(V*\bar{\psi})+P(|\psi|^2))\d x,$$
	where $P$ is a primitive of $p$ in class $C^\infty(\mathbb{R},\mathbb{R})$ in a neighborhood of the origin and has a zero of order $2$ at the origin.
	
	When $V(x)=\frac{1}{|\mathbb{T}^\mathsf{d}|}\sum_{k\in\mathbb{Z}^{\mathsf{d}}}V_ke^{ikx},$ we consider the space:
	$$\mathcal{V}=\{V\mid V_k |k|^n\in[-\frac{1}{2},\frac{1}{2}]\},$$
	and endow with product probability measure. We present the following results:
	\begin{theorem}[Gevrey class case]\label{ex1thm1}
		There exists a zero measure set $\mathcal{V}^{\mathtt{res}}\subset\mathcal{V}$ such that $\forall V\in\mathcal{V}\setminus\mathcal{V}^{\mathtt{res}}, s>S_{\mathtt{fin}},\varepsilon<\varepsilon_0,$ if initial data $\psi_0$ of \eqref{ex1} satisfies $\Vert \psi_0\Vert_{s,\theta}^G=\varepsilon,$ then the solution of \eqref{ex1} satisfies
		$$\Vert\psi(t)\Vert_{s,\theta}^G\leq C_{\mathtt{sta}}\varepsilon ,\ \forall |t|\leq \frac{1}{C_{\mathtt{sta}}}e^{C_{\mathtt{fin}}\frac{|\ln \varepsilon|^2}{\ln|\ln \varepsilon|}}.$$
	\end{theorem}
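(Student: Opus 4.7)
The plan is to cast the PDE \eqref{ex1} into the abstract setting of the Hamiltonian \eqref{Hamiltonian} and then invoke Theorem \ref{res1} directly, the whole difficulty being reduced to a verification of Assumption~1 (trivially) and Assumption~2 for the sequence $\omega_j = |j|^2 + V_j$, together with checking that $P\in\mathcal{P}_{3,\infty}$. Concretely, I would expand $\psi(x)=|\mathbb{T}^\mathsf{d}|^{-1/2}\sum_{k\in\mathbb{Z}^\mathsf{d}} u_k \e^{\i k\cdot x}$ so that the quadratic part becomes $H_0=\sum_j(|j|^2+V_j)|u_j|^2$ and the nonlinearity turns into a formal power series in $(u_j,\bar u_j)$. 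Reality of $P$ follows from $p\in C^\infty(\mathbb{R},\mathbb{R})$; momentum conservation follows from integration over $\mathbb{T}^\mathsf{d}$ (only tuples with $\sum_l\sigma_l j_l=0$ survive the integral); boundedness of the coefficients reduces to the fact that the Taylor coefficients of $P$ at $0$ are finite. Hence $P\in\mathcal{P}_{3,\infty}$ and the norm $\Vert\cdot\Vert_{s,\theta}^G$ coincides with the $W^G_{s,\theta}$ norm of Section~\ref{sec:2}.

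For Assumption~2.A.1 the condition $V_j|j|^n\in[-\tfrac12,\tfrac12]$ gives $V_j=O(|j|^{-n})$, so for $|j|$ large enough $\tfrac{1}{2}\leq \omega_j/|j|^2\leq 2$, i.e.\ A.1 holds with $\beta=2>1$. For A.3 I would group indices in blocks $\Omega_\alpha=\{j\in\mathbb{Z}^{\mathsf d}:|j|^2=\alpha\}$; two integers in different blocks satisfy $||j_1|^2-|j_2|^2|\geq 1$ while $|V_{j_1}|+|V_{j_2}|=O(\min(|j_1|,|j_2|)^{-n})$, so $|\omega_{j_1}-\omega_{j_2}|\geq \tfrac12(|j_1|^\delta+|j_2|^\delta)$ for every $\delta\in(0,2)$ once $|j|$ is large enough, with the finitely many low modes absorbed in $\Omega_0$; conditions (1) and (2) are obvious from the construction.

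The core work is the measure-theoretic verification of A.2. For fixed $d,N$ and a fixed multi-index $\mathcal{J}=(J_1,\dots,J_d)$ with $|J_l|\leq N$ and $\sum_l\sigma_l\omega_{j_l}\neq 0$, note that $\sum_l\sigma_l\omega_{j_l}=\sum_l\sigma_l|j_l|^2+\sum_{k}m_k(\mathcal{J})V_k$ where the integer coefficients $m_k(\mathcal{J})$ are not all zero (otherwise the combination would reduce to a pure integer and give the pairing described in A.2 or a fixed nonzero quantity). Since at least one $V_k$ enters linearly with unit coefficient up to sign, and the joint law of $(V_j|j|^n)$ is uniform, the measure of potentials for which $|\sum\sigma_l\omega_{j_l}|<\eta$ is $O(\eta\,\max_k|k|^n)=O(\eta N^n)$. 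The number of admissible $\mathcal{J}$ is at most $(2N+1)^{\mathsf{d}d}\cdot 2^d$, so fixing $\eta=\gamma N^{-\tau d^p}$ with $\tau,p$ large enough (depending on $\mathsf d$ and $n$), the measure of the bad set at stage $(d,N)$ is summable, and Borel–Cantelli yields a zero-measure exceptional set $\mathcal{V}^{\mathtt{res}}\subset\mathcal{V}$ off of which A.2 holds. The case $\sum_l\sigma_l\omega_{j_l}=0$ automatically forces $m_k(\mathcal{J})\equiv 0$ (almost surely in $V$), which by the independence of the monomials $\{|j|^2\}$ in $\mathbb{Z}$ imposes the pairing $\sigma_{\tau(i)}=\sigma_{\tau(i+d/2)}$, $\omega_{j_{\tau(i)}}=\omega_{j_{\tau(i+d/2)}}$ demanded by A.2.

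Once A.1–A.3 are in force for every $V\in\mathcal{V}\setminus\mathcal{V}^{\mathtt{res}}$ with $\beta=2>1$, Theorem~\ref{res1} applies verbatim to the Hamiltonian of \eqref{ex1} in $W^G_{s,\theta}$, giving the announced stability time $\tfrac{1}{C_{\mathtt{sta}}}\e^{C_{\mathtt{fin}}|\ln\varepsilon|^2/\ln|\ln\varepsilon|}$. The main obstacle is the quantitative small-divisor estimate in A.2: one must choose the exponent $p$ so that the bad-set measures $\eta N^n\cdot N^{\mathsf d d}$ remain summable in $(d,N)$ \emph{after} taking $\eta=\gamma N^{-\tau d^p}$, and control this uniformly in $d$, a standard but delicate Borel–Cantelli bookkeeping that also fixes the admissible range of $p$ entering the final stability estimate.
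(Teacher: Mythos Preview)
Your overall plan---reduce \eqref{ex1} to the abstract Hamiltonian \eqref{Hamiltonian} by Fourier expansion, check that $P\in\mathcal{P}_{3,\infty}$, verify Assumption~2 for $\omega_j=|j|^2+V_j$, and then invoke Theorem~\ref{res1}---is exactly what the paper does. The only substantive difference is in how A.2 is established: the paper does not run a fresh Borel--Cantelli argument but instead quotes Bourgain's full-measure non-resonance set $D_{\gamma,n}^{\mu_1,\mu_2}$ (from \cite{B05}, generalized in \cite{BMP20}) and then shows by a short computation that membership in $D_{\gamma,n}^{\mu_1,\mu_2}$ implies A.2 with $\tau=\mu_1+\mu_2+n$ and, crucially, $p=1$. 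Your direct probabilistic argument is a legitimate alternative and in fact reproduces the same exponent: the bad-set measure $\gamma N^{-\tau d^p}\cdot N^n\cdot (CN)^{\mathsf d d}$ is summable in $N$ as soon as $\tau d>n+\mathsf d d+1$, which for $p=1$ just means $\tau>\mathsf d+n+1$.

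There is, however, a genuine slip in your write-up: you say one should take ``$\tau,p$ large enough''. Enlarging $p$ is fatal for the conclusion. The stability time $\exp\bigl(C_{\mathtt{fin}}|\ln\varepsilon|^2/\ln|\ln\varepsilon|\bigr)$ in Theorem~\ref{res1} comes (via Proposition~1) precisely from the case $p=1$; for $p>1$ Proposition~1 only yields $\exp(|\ln\varepsilon|^{1+a})$ with $a<1/p<1$, and the Bourgain-optimal bound is lost. So you must explicitly verify---as your own counting in fact shows---that $p=1$ suffices, and then feed $p=1$ into the abstract theorem.

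A smaller issue: your A.3 check does not work as written. With blocks $\Omega_\alpha=\{j:|j|^2=\alpha\}$, two adjacent blocks give $||j_1|^2-|j_2|^2|=1$ and hence $|\omega_{j_1}-\omega_{j_2}|\asymp 1$, which is \emph{not} $\ge\tfrac12(|j_1|^\delta+|j_2|^\delta)$ for any $\delta>0$ when $|j_1|,|j_2|$ are large. The paper instead takes shells in $|j|$ (thickness $C_1$) and obtains $\delta=1$ from $(|j|-|k|)(|j|+|k|)\gtrsim|j|+|k|$; you should adopt that block structure rather than level sets of $|j|^2$.
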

	\begin{theorem}[Logarithmic Ultra-differential case]\label{ex1thm2}
		There exists a zero measure set $\mathcal{V}^{\mathtt{res}}\subset\mathcal{V}$ such that $\forall V\in\mathcal{V}\setminus\mathcal{V}^{\mathtt{res}}, s>S_{\mathtt{fin}},\varepsilon<\varepsilon_0,$ if initial data $\psi_0$ of \eqref{ex1} satisfies $\Vert \psi_0\Vert_{s,q}^U=\varepsilon,$ then the solution of \eqref{ex1} satisfies
		$$\Vert\psi(t)\Vert_{s,q}^U\leq C_{\mathtt{sta}}\varepsilon ,\ \forall |t|\leq \frac{1}{C_{\mathtt{sta}}}e^{C_{\mathtt{fin}}|\ln\varepsilon|^{1+a}},$$
		where $a\leq\frac{q-1}{q+1}.$
	\end{theorem}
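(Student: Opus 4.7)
The plan is to reduce the Cauchy problem for \eqref{ex1} to the abstract Hamiltonian framework \eqref{Hamiltonian} and then invoke Theorem \ref{res2}. Starting from the Fourier expansion $\psi(x) = \sum_{j \in \mathbb{Z}^{\mathsf{d}}} u_j \e^{\i j \cdot x}$, I would identify the quadratic part as $H_0 = \sum_j (|j|^2 + V_j) |u_j|^2$, so that $\omega_j = |j|^2 + V_j$, while the nonlinear perturbation $\int_{\mathbb{T}^{\mathsf{d}}} P(|\psi|^2)\,\d x$ Fourier-expands into a sum of monomials of degree $\geq 4$; translation invariance of the spatial integral forces momentum conservation, and the smoothness of $P$ together with $P(0)=P'(0)=0$ gives uniform boundedness of the coefficients, so the perturbation lies in $\mathcal{P}_{3,\infty}$.

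Next, I would verify Assumption 2 for $\omega_j = |j|^2 + V_j$. Condition A.1 is immediate with $\beta = 2$ since $|V_j| \leq \tfrac{1}{2}|j|^{-n}$. For A.3, I take the block decomposition indexed by the sphere radius, $\Omega_\alpha = \{j : |j| = \alpha\}$, so that within a block $C_1 = 0$ and between distinct blocks $|\omega_{j_1} - \omega_{j_2}| \geq \bigl||j_1|^2 - |j_2|^2\bigr| - 1 \geq \tfrac{1}{2}(|j_1| + |j_2|)$, yielding $\delta = 1$. This part is deterministic and requires no exclusion of potentials.

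The main obstacle is A.2: the measure-theoretic construction of the exceptional set $\mathcal{V}^{\mathtt{res}}$ together with control of the exponent $p$. For a fixed tuple $(J_1,\ldots,J_d)$ with $|J_l|\leq N$ and $\sum_l \sigma_l \omega_{j_l} \neq 0$, the denominator is an affine function of the independent parameters $V_k$ with integer coefficients in $[-d,d]$; the Lebesgue measure of the set on which it lies in $(-\gamma N^{-\tau d}, \gamma N^{-\tau d})$ is bounded by $C_n N^{n} \gamma N^{-\tau d}$. The number of such tuples is at most $(CN)^{\mathsf{d} d}\cdot 2^d$, so choosing $\tau$ so that $\tau d - \mathsf{d} d - n > d$ makes the total measure summable over both tuples and over $N$. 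A Borel–Cantelli argument then yields a measure-zero set $\mathcal{V}^{\mathtt{res}}$ outside of which A.2 holds with $p = 1$; in the degenerate case $\sum_l \sigma_l \omega_{j_l}=0$, algebraic independence of the $V_k$ forces the sum $\sum_l \sigma_l V_{j_l}$ to vanish by a pairing, which together with the corresponding relation on the $|j_l|^2$ produces the pairing of indices required by A.2.

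With all hypotheses of Theorem \ref{res2} verified in the logarithmic ultra-differentiable class $W^U_{s,q}$ and with $p=1$, the general stability estimate $T_\varepsilon > C_{\mathtt{sta}}^{-1}\exp(C_{\mathtt{fin}}|\ln\varepsilon|^{1+a})$ applies for every $a < (q-1)/(qp+1) = (q-1)/(q+1)$, which is precisely the claimed bound. The most delicate routine step is the bookkeeping that exhibits $p=1$ (rather than a larger integer) in the small-denominator estimate, since this is what optimizes the stability exponent $(q-1)/(q+1)$ inside the present framework.
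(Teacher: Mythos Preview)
Your proposal is correct and follows the same overall strategy as the paper: pass to Fourier variables to land in the abstract setting \eqref{Hamiltonian}, verify Assumption~2 for $\omega_j=|j|^2+V_j$ (A.1 with $\beta=2$, A.3 with spheres/shells and $\delta=1$), and then invoke Theorem~\ref{res2} with $p=1$ to get the exponent $(q-1)/(q+1)$.

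The only substantive difference is in the treatment of A.2. The paper does not carry out a direct measure estimate; instead it recalls the Bourgain non-resonance set $D_{\gamma,n}^{\mu_1,\mu_2}$ from \cite{B05,BMP20} (whose full-measure property is taken as known) and checks by an elementary product estimate that membership in $D_{\gamma,n}^{\mu_1,\mu_2}$ implies the inequality \eqref{nr} with $\tau=n+\mu_1+\mu_2$ and $p=1$. Your route---treating $\sum_l\sigma_l\omega_{j_l}$ as an affine function of the independent $V_k$ and running a Borel--Cantelli argument over tuples, $N$, and $d$---is more self-contained and avoids the external citation, at the cost of some extra bookkeeping (uniformity of $\tau$ in $d$, and summability over both $N$ and $d$). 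Both approaches produce $p=1$, which is the decisive point for the exponent.
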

	It remains to verify that the frequencies in the Hamiltonian of equation \eqref{ex1} satisfy assumptions A.1, A.2, A.3 and prove that the set of frequencies violating these assumptions has zero measure.
	
	To fit our scheme, we introduce the Fourier coefficients
	$$\psi(x)=\frac{1}{\sqrt{|\mathbb{T}^{\mathsf{d}}|}}\sum_{j\in\mathbb{Z}^\mathsf{d}}u_{j,+}\e^{ijx},\bar{\psi}(x)=\frac{1}{\sqrt{|\mathbb{T}^{\mathsf{d}}|}}\sum_{j\in\mathbb{Z}^\mathsf{d}}u_{j,-}\e^{-ijx}.$$
	In these variables, equation \eqref{ex1} takes the form $H=H_0+P,$ where $ H_0$ has frequencies
	$$\omega_{j}:=|j|^2+V_j.$$ 
	Then the frequencies belong to the set
	$$D_n=\{\omega\mid\sup_{j\in\mathbb{Z}^{\mathsf{d}}}|\omega-|j|^2||j|^n<\frac{1}{2}\}.$$
	
    Obviously, 
    $$\frac{|j|^2}{2}\leq|j|^2-\frac{1}{2}\leq|\omega_{j}|\leq|j|^2+\frac{1}{2}\leq\frac{3}{2}|j|^2, \forall j\neq0,$$
    so assumption A.1 is satisfied with $\beta=2, C_0=2.$
    Besides, $\forall j\neq k,$
    \begin{align*}
    	|\omega_j-\omega_k|\geq|j|^2-|k|^2-1\geq(|j|-|k|(|j|+|k|))-1\geq\frac{|j|+|k|}{2}.
    \end{align*}
	Then we take every $\Omega_\alpha$ as a spherical shell of thickness $C_1$, and $C_2=\frac{1}{2},\delta=1$ to satisfy assumption A.3.
	
	In current studies of the Schrödinger equation with external parameters, Bourgain's non-resonance condition for convolutions has been extensively employed. We can briefly illustrate that our non-resonance condition encompasses this type of non-resonance assumption.
	
	In \cite{B05}, Bourgain established a classical measure estimate for the resonant set associated with a convolution potential. This work was extended in \cite{BMP20} to the following more general framework
	$$D_{\gamma,n}^{\mu_1,\mu_2}=\{\omega\in D_n\mid |\omega\cdot\ell|>\gamma\prod_{m\in\mathbb{Z}^{\mathsf{d}}}\frac{1}{(1+\ell_m^{\mu_1}\langle m\rangle^{\mu_2+n})},\forall\ell\in\mathbb{Z}^{\mathbb{Z}^{\mathsf{d}}}\},$$
	where $\mu_1,\mu_2>1.$ Notice that when supportive index $m$ for $\ell$ satisfying $\langle m\rangle<N-1$, we have 
	\begin{align*}
		\prod_{m\in\mathbb{Z}^{\mathsf{d}}}{(1+\ell_m^{\mu_1}\langle m\rangle^{\mu_2+n})}&\leq\prod_{m\in\mathbb{Z}^\mathsf{d}}(1+l_m\langle m\rangle)^{\mu_1+\mu_2+n}\\
		&\leq\prod_{m\in\mathbb{Z}^\mathsf{d}}(1+\langle m\rangle)^{l_m(\mu_1+\mu_2+n)}\\
		&\leq\prod_{m\in\mathbb{Z}^\mathsf{d}}(1+\langle m\rangle)^{l_m(\mu_1+\mu_2+n)}\\
		&\leq N^{(\mu_1+\mu_2+n)|\ell|_1}.
	\end{align*}
	So $\omega\in D_{\gamma,n}^{\mu_1,\mu_2}$ actually also holds:
	$$|\omega\cdot\ell|\geq\frac{\gamma}{N^{\tau d}},$$
	where $d=|\ell|_1$ is just the $1$- norm of $\ell,$ and $\tau=n+\mu_1+\mu_2.$
	Namely, assumption A.2 is satisfied by $\tau=\mu_1+\mu_2+n,p=1.$

	\subsection{Fractional Schr\"odinger equation}
	In this section, we present a case of $p\neq1$ arising from the weakening of the non-resonance condition, such as only one parameter is used to adjust non-resonance.
	
	Now we study the following fractional Schr\"odinger equation \eqref{ex2}
	\[\i\partial_t\psi=(\Delta+m)^\eta\psi+p(|\psi|^2)\psi, \]
	where $\eta>\frac{1}{2}$ satisfies the assumption A.1, and $p(x)$ is the same as in the previous subsection. Then \eqref{ex2} can be viewed as Hamiltonian system with Hamiltonian function
	$$H(\psi,\bar{\psi})=\int_{x\in\mathbb{T}^\mathsf{d}}\bar{\psi}(\Delta+m)^\eta \psi+P(|\psi|^2)\d x,$$
	where $P$ is a primitive of $p$ in class $C^{\infty}(\mathbb{R},\mathbb{R})$ in a neighborhood of the origin and has a zero of order 2 at the origin.
	When we use the Fourier expansion
	$$u_{\sigma}(x):=\frac{1}{\sqrt{|\mathbb{T}^\mathsf{d}|}}\sum_{j\in\mathbb{Z}^{\mathsf{d}}}u_{(j,\sigma)}\e^{ijx},$$
    equation \eqref{ex2} takes the form $H=H_0+P,$ with frequency
	$$\omega_j=(|j|^2+m)^{\eta}.$$
	We use the parameter $m$ to adjust the non-resonance. The results are as follows:
	\begin{theorem}[Gevrey class case]\label{ex2thm1}
		For any interval $[M_1,M_2]$, there exists a zero measure set $\mathcal{M}\subset[M_1,M_2]$ such that $\forall m\in[M_1,M_2]\setminus\mathcal{M}, s>S_{\mathtt{fin}},\varepsilon<\varepsilon_0,$ if the initial data $\psi_0$ of \eqref{ex2} satisfies $\Vert \psi_0\Vert_{s,\theta}^G=\varepsilon,$ then the solution of \eqref{ex2} satisfies
		$$\Vert\psi(t)\Vert_{s,\theta}^G\leq C_{\mathtt{sta}}\varepsilon ,\ \forall |t|\leq \frac{1}{C_{\mathtt{sta}}}e^{C_{\mathtt{fin}}\frac{|\ln \varepsilon|^2}{\ln|\ln \varepsilon|}}.$$
	\end{theorem}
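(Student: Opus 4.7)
The plan is to reduce the claim to the Gevrey stability statement Theorem \ref{res1} by verifying Assumption 2 for the frequencies $\omega_j = (|j|^2 + m)^\eta$ for all $m$ in a full-measure subset of $[M_1, M_2]$. Once this is done, the nonlinear term $p(|\psi|^2)\psi$, being analytic near the origin, yields a perturbation in $\mathcal{P}_{3,\infty}$ once expanded in the Fourier variables, so Theorem \ref{res1} applies verbatim and produces exactly the announced stability bound.

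Assumptions A.1 and A.3 I would dispatch quickly. Since $\eta > 1/2$, a two-sided bound on $(|j|^2+m)^\eta / |j|^{2\eta}$ that is uniform for $m \in [M_1, M_2]$ and $|j|$ large gives A.1 with $\beta = 2\eta > 1$. For A.3, I would take $\Omega_\alpha$ as spherical shells of bounded thickness; applying the mean value theorem to $x \mapsto (x+m)^\eta$ yields $|\omega_{j_1} - \omega_{j_2}| \gtrsim \max(|j_1|, |j_2|)^{2\eta - 2} \cdot \bigl||j_1|^2 - |j_2|^2\bigr|$, which gives the required separation with $\delta = 2\eta - 1 > 0$.

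The substantive step is A.2, and it is precisely where the general exponent $p > 1$ of the framework is used: here only the single parameter $m$ is available to tune the frequencies, so the Bourgain-type argument employed for convolution potentials must be replaced by a higher-order transversality argument. For each multi-index $(\mathbf{j}, \boldsymbol\sigma)$ with $|j_l| \le N$, I would study the real-analytic function $f_{\mathbf{j}, \boldsymbol\sigma}(m) := \sum_l \sigma_l (|j_l|^2 + m)^\eta$. Using the linear independence of $\{(a+m)^\eta\}_{a \in \mathbb{N}}$ on any open interval when $\eta \notin \mathbb{N}$ (together with an elementary direct argument when $\eta \in \mathbb{N}$), one sees that $f_{\mathbf{j}, \boldsymbol\sigma} \equiv 0$ forces the indices to pair up with equal $|j_l|^2$ and opposite signs, which is precisely the structural resonance admitted by A.2. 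For all remaining multi-indices a R\"ussmann-type non-degeneracy estimate shows that among the derivatives $\partial_m^k f_{\mathbf{j}, \boldsymbol\sigma}$ with $0 \le k \le d$, at least one attains a value $\ge (C_\eta N)^{-C_\eta d^{p}}$ uniformly in $m$, for an appropriate $p = p(\eta) \ge 1$; a classical sublevel-set lemma then bounds the measure of $\{m : |f_{\mathbf{j}, \boldsymbol\sigma}(m)| < \gamma N^{-\tau d^p}\}$ by roughly $(\gamma N^{-\tau d^p})^{1/d}$.

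The hard part is making the resulting measure estimate summable. There are at most $N^{\mathsf{d} d}$ non-structural multi-indices of length $d$ with $|j_l| \le N$, so the constants $\tau$ and the exponent $p$ must be chosen large enough that after multiplying by this combinatorial factor and summing over $d \ge 3$ and $N \ge 1$ the total excluded measure is $O(\gamma)$; sending $\gamma \to 0$ along a sequence then produces the zero-measure exceptional set $\mathcal{M}$. With A.2 secured on $[M_1, M_2] \setminus \mathcal{M}$, the hypotheses of Theorem \ref{res1} are met and it delivers exactly the stability time $T_\varepsilon > C_{\mathtt{sta}}^{-1}\,\e^{C_{\mathtt{fin}} |\ln\varepsilon|^2 / \ln|\ln\varepsilon|}$.
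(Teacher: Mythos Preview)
Your reduction strategy is correct and matches the paper's: verify Assumption~2 for $\omega_j=(|j|^2+m)^\eta$ (with $\beta=2\eta$, $\delta=2\eta-1$), then invoke Theorem~\ref{res1}. The only point where you diverge from the paper is the quantitative non-degeneracy step in A.2. You appeal abstractly to a ``R\"ussmann-type non-degeneracy estimate'' and leave $p=p(\eta)$ unspecified; the paper instead computes directly. Writing the $k\times k$ matrix of successive $m$-derivatives of $\omega_{j_1},\dots,\omega_{j_k}$, one notices that $\frac{d^l}{dm^l}\omega_j = (|j|^2+m)^{\eta-l}\prod_{n=0}^{l-1}(\eta-n)$, so the determinant factors as a Vandermonde in the variables $x_l=(|j_l|^2+m)^{-1}$. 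This gives an explicit lower bound $|D|\ge C_\eta N^{-2d^2}$, and then Lemmas~\ref{det} and~\ref{measure} produce the sublevel-set estimate with the concrete denominator $N^{4d^3}$; in particular $p=3$ is obtained independently of~$\eta$. Your qualitative linear-independence argument is subsumed by this calculation, since the Vandermonde is nonzero precisely when the $|j_l|^2$ are distinct. Otherwise the structure---count multi-indices, sum the measure bounds, intersect over $\gamma\to0$---is the same.
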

	\begin{theorem}[Logarithmic Ultra-differential case]\label{ex2thm2}
		For any interval $[M_1,M_2]$, there exists a zero measure set $\mathcal{M}\subset[M_1,M_2]$ such that $\forall m\in[M_1,M_2]\setminus\mathcal{M}, s>S_{\mathtt{fin}},\varepsilon<\varepsilon_0,$ if the initial data $\psi_0$ of \eqref{ex2} satisfies $\Vert \psi_0\Vert_{s,q}^U=\varepsilon,$ then the solution of \eqref{ex2} satisfies
		$$\Vert\psi(t)\Vert_{s,q}^U\leq C_{\mathtt{sta}}\varepsilon ,\ \forall |t|\leq \frac{1}{C_{\mathtt{sta}}}e^{C_{\mathtt{fin}}|\ln\varepsilon|^{1+a}},$$
		where $a\leq\frac{q-1}{3q+1}.$
	\end{theorem}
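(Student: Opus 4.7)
The plan is to realize equation \eqref{ex2} as a Hamiltonian system of the form \eqref{Hamiltonian} on $W^U_{s,q}$ via the Fourier expansion introduced in the preceding subsection, verify \textbf{Assumption 2} for $m$ in the complement of a zero-measure set $\mathcal{M}\subset[M_1,M_2]$ (the single parameter $m$ will force the exponent $p=3$ in the Diophantine condition \eqref{nr}), and then invoke Theorem \ref{res2} directly with the weight $f(x)=(\ln(x+\kappa))^q$.

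In Fourier variables the frequencies are $\omega_j(m)=(|j|^2+m)^\eta$, so \textbf{A.1} with $\beta=2\eta>1$ is immediate. For \textbf{A.3} I would choose the blocks $\Omega_\alpha$ to be the spherical shells $\{j:|j|^2=\alpha\}$; the integer separation $\bigl||j_1|^2-|j_2|^2\bigr|\geq 1$ between distinct shells, together with the mean value theorem for $\xi\mapsto\xi^\eta$ in the near-diagonal regime and the direct estimate $\omega_{j_2}-\omega_{j_1}\sim|j_2|^{2\eta}$ in the well-separated regime, yield $|\omega_{j_1}-\omega_{j_2}|\geq C_2(|j_1|^\delta+|j_2|^\delta)$ with $\delta=2(\eta-1)>0$. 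Both \textbf{A.1} and \textbf{A.3} are checked in exactly the same way as for the previous Gevrey theorem and can be handled in parallel for both regularity classes.

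The heart of the proof is the measure-theoretic verification of \textbf{A.2} with $p=3$ for the one-parameter family $\{\omega_j(m)\}$. For a fixed multi-index $\mathcal{J}=(j_l,\sigma_l)_{l=1}^d$ with $|j_l|\leq N$ and $\sum_l\sigma_l\omega_{j_l}(m)$ not identically zero, I would study $g_\mathcal{J}(m):=\sum_{l=1}^d\sigma_l(|j_l|^2+m)^\eta$ together with its derivatives
\[
g_\mathcal{J}^{(k)}(m)=\eta(\eta-1)\cdots(\eta-k+1)\sum_{l=1}^d\sigma_l(|j_l|^2+m)^{\eta-k}.
\]
Since only one parameter is at our disposal, transversality must be harvested from this derivative sequence: a generalized Vandermonde argument applied to the distinct values $|j_l|^2+m\lesssim N^2$ produces some $k\leq d$ with $|g_\mathcal{J}^{(k)}(m)|\geq N^{-\tau_0 d^2}$ uniformly on $[M_1,M_2]$, and the classical sub-level lemma then bounds $\bigl|\{m:|g_\mathcal{J}(m)|<\gamma N^{-\tau d^3}\}\bigr|$ by a quantity that, once summed over the $\lesssim N^{\mathsf{d}d}$ multi-indices and over $d,N$, is finite provided $\tau$ is large enough. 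Borel--Cantelli then produces the desired zero measure exceptional set $\mathcal{M}$, giving \eqref{nr} with $p=3$.

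With \textbf{Assumption 2} in force for $m\notin\mathcal{M}$, Theorem \ref{res2} applies directly and yields $T_\varepsilon\geq C_{\mathtt{sta}}^{-1}\e^{C_{\mathtt{fin}}|\ln\varepsilon|^{1+a}}$ for any $a<\frac{q-1}{qp+1}=\frac{q-1}{3q+1}$, as claimed. The main obstacle is the Vandermonde/sub-level estimate above: the power $d^2$ absorbed into the determinant bound on indices $\lesssim N^2$ combines with the $1/d$ from the sub-level lemma to pin down exactly $p=3$, and any sharpening of the determinant estimate would translate directly into a better stability exponent than $\frac{q-1}{3q+1}$.
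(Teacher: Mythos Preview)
Your proposal follows the same route as the paper: cast \eqref{ex2} in Hamiltonian form with frequencies $\omega_j(m)=(|j|^2+m)^\eta$, verify \textbf{A.1}--\textbf{A.3} directly, establish \textbf{A.2} with $p=3$ via a Vandermonde determinant bound (the paper shows $|D|\geq C_\eta N^{-2d^2}$ by computing the Wronskian-type determinant explicitly and reducing it to a Vandermonde in the variables $x_l=(|j_l|^2+m)^{-1}$) combined with the sub-level Lemma~\ref{measure}, and then invoke Theorem~\ref{res2}. One small discrepancy: you take $\delta=2(\eta-1)$ in \textbf{A.3}, whereas the paper states $\delta=2\eta-1$; your value is what the mean-value-theorem argument actually delivers when adjacent shells satisfy $\bigl||j_1|^2-|j_2|^2\bigr|=1$, but it implicitly restricts to $\eta>1$ rather than the paper's $\eta>\tfrac12$, so you should flag which range of $\eta$ your verification covers.
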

	It's easy to verify when $\beta=2\eta,\delta=2\eta-1$, \eqref{ex2} satisfies A.1, A.3. Thus, we just need to construct the resonant set $\mathcal{M},$ and estimate the measure with a standard process.
	
	\begin{lemma}
		For $1\leq k\leq d$ and $|j_1|<\dots<|j_k|<N$, consider the determinant
		$$
		 	D:=\begin{vmatrix}\omega_{j_1}&\omega_{j_2}&\dots&\omega_{j_k}\\
		 	\frac{d\omega_{j_1}}{dm}&\frac{d\omega_{j_2}}{dm}&\dots&\frac{d\omega_{j_k}}{dm}\\
		 	\dots&\dots&\dots&\dots\\
		 	\frac{d^{k-1}\omega_{j_1}}{dm^{k-1}}&\frac{d^{k-1}\omega_{j_2}}{dm^{k-1}}&\dots&\frac{d^{k-1}\omega_{j_k}}{dm^{k-1}}\end{vmatrix}. 
		 $$
	     We have $|D|\geq\frac{C_\eta}{N^{2d^2}}.$
	\end{lemma}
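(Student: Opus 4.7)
The plan is to reduce the determinant $D$ to a Vandermonde determinant by factoring rows and columns, and then to obtain the lower bound from the integrality of $|j_i|^2$.

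First I would compute the derivatives explicitly. Since $\omega_j=(|j|^2+m)^\eta$, one has
$$\frac{d^l\omega_j}{dm^l}=c_l\,(|j|^2+m)^{\eta-l},\qquad c_l:=\prod_{i=0}^{l-1}(\eta-i),$$
with $c_0=1$. Provided $\eta\notin\{1,2,\dots,d-1\}$ (which is an admissible restriction absorbed into $C_\eta$), every $c_l$ is nonzero. Factoring $c_l$ out of the $(l+1)$-th row of $D$ and $(|j_i|^2+m)^\eta$ out of the $i$-th column yields
$$D=\left(\prod_{l=0}^{k-1}c_l\right)\left(\prod_{i=1}^{k}(|j_i|^2+m)^\eta\right)\det\bigl[y_i^{l}\bigr]_{\substack{l=0,\dots,k-1\\ i=1,\dots,k}},\qquad y_i:=\frac{1}{|j_i|^2+m},$$
and the remaining matrix is a Vandermonde matrix in the $y_i$.

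Next I would apply the Vandermonde identity and rewrite
$$\det\bigl[y_i^{l}\bigr]=\prod_{1\le i<l\le k}(y_l-y_i)=\prod_{1\le i<l\le k}\frac{|j_i|^2-|j_l|^2}{(|j_i|^2+m)(|j_l|^2+m)}.$$
Here the key observation is that $|j_i|^2\in\mathbb{Z}_{\ge 0}$ and $|j_1|<\dots<|j_k|$, so $\bigl||j_l|^2-|j_i|^2\bigr|\ge 1$ for $i<l$. The denominators are controlled by $(|j_i|^2+m)(|j_l|^2+m)\le C_\eta N^4$ since $|j_i|,|j_l|<N$ and $m$ stays in a bounded interval. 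The column factors $(|j_i|^2+m)^\eta$ are bounded below by $m^\eta\ge M_1^\eta$ (and do not need to be estimated from above, being in the numerator of a lower bound).

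Putting the estimates together gives
$$|D|\ge C_\eta\prod_{1\le i<l\le k}\frac{1}{C_\eta N^4}\ge \frac{C_\eta}{N^{4\binom{k}{2}}}\ge\frac{C_\eta}{N^{2d^2}},$$
using $k\le d$ in the last step. The only subtle point is ensuring $\prod_{l} c_l\ne 0$, which forces the mild non-integrality restriction on $\eta$; this and the lower bound on $m$ are the only places where dependence on $\eta$ enters and they are swept into the constant $C_\eta$. No step is truly hard once the Vandermonde reduction is in place.
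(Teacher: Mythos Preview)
Your proof is correct and follows essentially the same route as the paper: compute the derivatives, factor $c_l$ from each row and $\omega_{j_i}$ from each column to reduce $D$ to a Vandermonde determinant in $y_i=(|j_i|^2+m)^{-1}$, then use the integrality $\bigl||j_l|^2-|j_i|^2\bigr|\ge 1$ together with $|j_i|<N$ to bound it below by $C_\eta N^{-2k(k-1)}\ge C_\eta N^{-2d^2}$. Your explicit remark that $\eta\notin\{1,\dots,d-1\}$ is required (so that $\prod_l c_l\neq 0$) and your lower bound $\omega_{j_i}\ge M_1^\eta$ are implicit in the paper's constant $C_\eta$; otherwise the two arguments coincide.
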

	\begin{proof}
		We can calculate 
		$$\frac{\d^l \omega_{j}}{\d m^l}=(|j|^2+m)^{\eta-l}\prod_{n=0}^{l-1}(\eta-n),$$
		so 
		$$
		D=\prod_{l=1}^{k}\omega_{j_l}\prod_{l=0}^{k-1}(\eta-l)^{k-l}
		\begin{vmatrix}
			1&  1&  \dots&  1& \\
			x_1&  x_2&  \dots&  x_k& \\
			\dots&  \dots&  \dots&  \dots& \\
			x_1^{k-1}& x_2^{k-1} &  \dots& x_k^{k-1} &
		\end{vmatrix},
		$$
		where $x_l=\frac{1}{|j_l|^2+m}.$
		The last determinant is a Vandermonde determinant and can be expressed as 
		\begin{align*}
			\prod_{1\leq r\leq s\leq k}(x_{j_r}-x_{j_s})&=\prod_{1\leq r\leq s\leq k}\frac{|j_r|^2-|j_s|^2}{(|j_r|^2+m)(|j_s|^2+m)}\\
			&=\prod_{1\leq r\leq s\leq k}(|j_r|^2-|j_s|^2)(\prod_{1\leq l\leq k}\frac{1}{|j_l|^2+m})^{k-1}.
		\end{align*}
		Thus we have 
		\begin{align*}
			|D|\geq C_{\eta}(\prod_{1\leq l\leq k}\frac{1}{2N^2})^{k-1}\geq\frac{C_{\eta}}{N^{2k^2}},
		\end{align*}
		and the conclusion follows for $k\leq d.$
	\end{proof}
	Using Lemmas \ref{det} and \ref{measure}, we derive the following measure estimate for the non-resonant set:
	$$\mathcal{M}_\gamma=\{m\in[M_1,M_2]\mid\sum_{l=1}^{d}\sigma_l\omega_{j_l}\leq\frac{\gamma}{N^{4d^3}}, \exists \{j_1,...,j_d\}, |j_l|<N\}.$$
	
	\begin{proposition}
     $$|\mathcal{M}_\gamma|\leq \gamma.$$ 
	\end{proposition}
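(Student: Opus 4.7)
The plan is to bound $|\mathcal{M}_\gamma|$ by a union bound over sign and frequency-index configurations, combined with the quantitative non-degeneracy from the determinant lemma above and a Pyartli/R\"ussmann-type transversality estimate (Lemma \ref{measure}). Writing
\[
\mathcal{M}_\gamma \subset \bigcup_{\boldsymbol{\sigma},\mathbf{j}} \mathcal{M}_{\boldsymbol{\sigma},\mathbf{j}}, \quad \mathcal{M}_{\boldsymbol{\sigma},\mathbf{j}} := \Bigl\{ m \in [M_1,M_2] : \Bigl|\sum_{l=1}^d \sigma_l \omega_{j_l}(m)\Bigr| \leq \gamma/N^{4d^3} \Bigr\},
\]
with $(\boldsymbol{\sigma},\mathbf{j}) \in \{\pm 1\}^d \times \{|j|<N\}^d$, the estimate is obtained configuration-by-configuration and then summed.

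The first step is a routine reduction. Pairs $(j_l,\sigma_l)$ and $(j_{l'},\sigma_{l'})$ with $j_l=j_{l'}$ and $\sigma_l=-\sigma_{l'}$ cancel identically in $m$, so after cancellation one is left with $k \leq d$ distinct indices $|j_1|<\cdots<|j_k|$ carrying nonzero integer coefficients $n_l$ of absolute value at most $d$. Configurations that reduce to an identically zero sum are trivial resonances excluded from the measure count.

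For a fixed reduced configuration, the determinant lemma gives $|D| \geq C_\eta/N^{2d^2}$ for the associated $k\times k$ Wronskian-like matrix of the $\omega_{j_l}$ in the variable $m$. Since each entry of this matrix is bounded by a polynomial in $N$ uniformly for $m\in[M_1,M_2]$, inverting the Wronskian (via Cramer's rule) produces a pointwise lower bound
\[
\max_{0 \leq r \leq k-1} |g^{(r)}(m)| \geq \frac{C_\eta}{N^{c d^2}}, \quad g(m) := \sum_{l=1}^{k} n_l (|j_l|^2+m)^\eta,
\]
where $c$ is an $\eta$-dependent constant. Lemma \ref{measure} then bounds $|\mathcal{M}_{\boldsymbol{\sigma},\mathbf{j}}|$ by a power of $\gamma\, N^{c d^2}/N^{4d^3}$.

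Summing over all configurations, of which there are at most $2^d (2N{+}1)^{\mathsf{d} d}$, the exponent $4d^3$ in the definition of $\mathcal{M}_\gamma$ has been chosen precisely so that the gain $N^{-4d^3}$ dominates the combinatorial loss $N^{\mathsf{d} d}$ and the Wronskian loss $N^{c d^2}$ for every $k \leq d$. The main obstacle is the bookkeeping needed to verify that $(4d^3 - cd^2)/k - \mathsf{d} d > 0$ uniformly for $1 \leq k \leq d$, which holds once $d$ is sufficiently large relative to $c$ and $\mathsf{d}$. Once this exponent gap is confirmed, the union bound telescopes to $|\mathcal{M}_\gamma| \leq \gamma$, as claimed.
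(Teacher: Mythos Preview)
Your approach is essentially the same as the paper's: decompose $\mathcal{M}_\gamma$ as a union over configurations, use the determinant lemma together with Lemma~\ref{det} (which is precisely the Cramer-type linear-algebra fact you invoke) to obtain a lower bound on some derivative of $g(m)=\sum_l \sigma_l\omega_{j_l}(m)$, apply Lemma~\ref{measure} to each piece, and sum. The paper's proof is terser---it does not spell out the cancellation/reduction to $k\le d$ distinct indices, nor the sign count---but the skeleton is identical, and the exponent $4d^3$ is chosen for the same reason you identify.
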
 
	\begin{proof}
	 By Lemma \ref{det}, for any $\mathcal{J}=\{j_1,...,j_d\}$ satisfying $|j_l|<N,$ we can get an index $(i)$ such that
		$$|\sum_{l=1}^{d}\frac{\d^{(i)}\omega_{j_l}(m)}{\d m^{(i)}}|\geq\frac{C_{\eta}d}{N^{2d^2+2}}.$$
	We fix $\mathcal{J}$ to define $$\mathcal{M}_{\mathcal{J,\gamma}}:=\{m\in[M_1,M_2]\mid\sum_{l=1}^{d}\sigma_l\omega_{j_l}(m)\leq\frac{\gamma}{N^{4d^3}}\}.$$
	 Then by Lemma \ref{measure}, we have
	 \begin{align*}
	 	|\mathcal{M}_{\mathcal{J},\gamma}|&\leq\left(\frac{\gamma}{N^{4d^3}}\right)^{\frac{1}{(i)}}\frac{N^{2d^2+2}}{C_\eta}\\
	 	&\leq(\frac{\gamma^{\frac{1}{(i)}}}{N^{4d^2}})\frac{N^{2d^2+2}}{C_\eta}\leq \frac{C_\eta\gamma}{N^{d^2}}.
	 	\end{align*}
	 Thus
	 \begin{align*}
	 	|\mathcal{M}_\gamma|&\leq\sum_{\mathcal{J},|j_l|<N}|\mathcal{M}_{\mathcal{J},\gamma}|\\
	 	&\leq\sum_{\mathcal{J},|j_l|<N}\frac{C_\eta\gamma}{N^{d^2}}\leq\frac{C_\eta\gamma(2N)^d}{N^{d^2}}\leq\gamma.
	 \end{align*} 
	\end{proof}
	Eventually, we make
	$$\mathcal{M}:=\bigcap_{\gamma>0}\mathcal{M}_\gamma$$ 
   as the resonant set $\mathcal{M}$ in Theorems in this subsection. Namely, for $m\in[M_1,M_2]\setminus\mathcal{M}$, the frequencies $\omega_j$ satisfy the assumption A.2.

	\subsection{Beam equation}
	In this section, we present another case of weak non-resonance, namely use metric $g$ to adjust non-resonance. The detail setting for metric on $\mathbb{T}^\mathsf{d}$ can be seen in section 5 in \cite{BFM24}, and we insert it for the sake of completeness.
	
	 Let $e_1,\dots,e_{\mathtt{d}}$ be a basis of $\mathbb{R}^{\mathtt{d}}$ and let
	 $$\Gamma:=\{x\in\mathbb{R}^{\mathsf{d}}:x=\sum_{j=1}^{\mathsf{d}}2\pi n_je_j,n_j\in\mathbb{Z}\}$$
	 be a maximal dimensional lattice. We denote $\mathbb{T}^\mathsf{d}_{\Gamma}:=\mathbb{R}^\mathsf{d}/\Gamma$.
	 
	To fit our scheme, it is convenient to introduce in $\mathbb{T}^\mathsf{d}_{\Gamma}$ the basis given by $e_1,...,e_d$, so
	that the functions turn out to be defined on the standard torus $\mathbb{T}^\mathsf{d}$ but endowed by the metric $\mathsf{g}_{ij}=e_j\cdot e_i$. In particular, the Laplacian operator in this metric is expressed as
	$$\Delta_{g}=\sum_{i,j=1}^{\mathsf{d}}g_{ij}\partial_{x_i}\partial_{x_j},$$ 
	where $g_{i,j}$ is the inverse of matrix $\mathsf{g}_{i,j}.$ The positive definite symmetric quadratic form $g(k,k)$ is defined by
	$$g(k,k):=\sum_{i,j=1}^{\mathsf{d}}g_{ij}k_ik_j,\forall k\in\mathbb{Z}^\mathsf{d},$$ 
	and $\Vert g\Vert_2^2:=\sum_{i,j}|g_{ij}|^2.$ Then we denote $\tau^*=\frac{\mathsf{d(\mathsf{d+1})}}{2}$ for the open set
	$$\mathcal{G}_0:=\{(g_{ij})_{i\leq j}\in\mathbb{R}^{\tau^*}\mid\inf_{x\neq0}\frac{g(x,x)}{|x|^2}>0\}.$$
	Define the set of admissible metrics as follows
	$$\mathcal{G}:=\bigcup_{\Gamma>0}\mathcal{G}_{\Gamma},$$
	where
	$$\mathcal{G}_\Gamma:=\{g\in\mathcal{G}_0\mid|\sum_{i\leq j}g_{ij}\ell_{ij}|\geq\frac{\Gamma}{(\sum_{i\leq j}|\ell_{ij}|)^{\tau^*}},\forall \ell\in\mathbb{R}^{\tau^*}\setminus\{0\}\}.$$
	Besides, we set 
	$$\mathcal{G}(\zeta_1,\zeta_2):=\{g\in\mathcal{G}\mid\zeta_1\leq\Vert g\Vert_2\leq\zeta_2\},$$
	$$\mathcal{G}_0(\zeta_1,\zeta_2):=\{g\in\mathcal{G}_0\mid\zeta_1\leq\Vert g\Vert_2\leq\zeta_2\}.$$

	Now we study the following beam equation \eqref{ex3} 
	\[\psi_{tt}+\Delta_g^2\psi+m\psi=-\frac{\partial p}{\partial \psi}+\sum_{l=1}^{L}\partial_{x_l}\frac{\partial p}{\partial(\partial_l\psi)} \]
	
	with $p(\psi,\partial_{x_1},\dots,\partial_{x_L})$ a function of class $C^\infty(\mathbb{R}^{\mathsf{d}+1},\mathbb{R})$ in a neighborhood of the origin and a zero of order $2$ at the origin. Introducing the variable $\phi=\dot{\psi}=\psi_{t},$ \eqref{ex3} can be seen as an Hamiltonian system in the variables $(\psi,\phi)$ with Hamiltonian function
	$$H(\psi,\phi):=\int_{\mathbb{T}^\mathsf{d}}\left(\frac{\phi^2}{2}+\frac{\psi(\Delta_g^2+m)\psi}{2}+p(\psi,\partial_{x_1},...,\partial_{x_L})\right)\d x.$$
	Then we can introduce new variables
	$$u_{\sigma}(x):=\frac{1}{\sqrt{2}}\left((\Delta_g^2+m)^{\frac{1}{4}}\phi+\sigma\i(\Delta_g^2+m)^{-\frac{1}{4}}\psi \right),$$
	and consider the Fourier series
	$$u_{\sigma}(x):=\frac{1}{\sqrt{|\mathbb{T}^\mathsf{d}|_g}}\sum_{j\in\mathbb{Z}^{\mathsf{d}}}u_{(j,\sigma)}\e^{ijx}.$$
	In these variables the beam equation \eqref{ex3} takes the form $H=H_0+P$, where $P$ is obtained by substituting $p$ term of the Hamiltonian and in $\mathcal{P},$ and $H_0$ has frequencies
	$$\omega_j=\sqrt{|j|_g^4+m}.$$
	We will illustrate that the metric $g$ contribute to the non-resonance condition. Our results for equation \eqref{ex3} are as follows:
	\begin{theorem}[Gevrey class case]\label{ex3thm1}
		For $0<\zeta_1<\zeta_2$, there exists a zero measure set $\mathcal{G}^{\mathtt{res}}\subset\mathcal{G}_0(\zeta_1,\zeta_2)$ such that $\forall g\in\mathcal{G}_0(\zeta_1,\zeta_2)\setminus\mathcal{G}^{\mathtt{res}}, s>S_{\mathtt{fin}},\varepsilon<\varepsilon_0,$ if initial data $\psi_0$ of \eqref{ex3} satisfies $\Vert \psi_0\Vert_{s,\theta}^G=\varepsilon,$ then the solution of \eqref{ex3} satisfies
		$$\Vert\psi(t)\Vert_{s,\theta}^G\leq C_{\mathtt{sta}}\varepsilon ,\ \forall |t|\leq \frac{1}{C_{\mathtt{sta}}}e^{C_{\mathtt{fin}}\frac{|\ln \varepsilon|^2}{\ln|\ln \varepsilon|}}.$$
	\end{theorem}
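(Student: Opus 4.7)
The plan is to reduce this theorem to the Main Theorem of Section 5 by verifying that the frequencies $\omega_j = \sqrt{|j|_g^4 + m}$ satisfy Assumption 2 for all $g$ in the complement of a zero-measure set $\mathcal{G}^{\mathtt{res}} \subset \mathcal{G}_0(\zeta_1,\zeta_2)$. Conditions A.1 and A.3 are essentially deterministic consequences of the metric bounds $\zeta_1 \leq \Vert g\Vert_2 \leq \zeta_2$, while A.2 is the measure-theoretic heart of the argument.

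For A.1, since every $g \in \mathcal{G}_0(\zeta_1,\zeta_2)$ is positive definite with bounded entries, there exist constants depending only on $\zeta_1,\zeta_2$ with $c_1|j|^2 \leq |j|_g^2 \leq c_2|j|^2$, whence $\omega_j \sim |j|^2$ for large $|j|$, giving $\beta = 2$. For A.3, I would group indices into blocks $\Omega_\alpha$ according to the value of $\lfloor |j|_g^2 \rfloor$ up to some window of width $C_1$, mirroring the Schr\"odinger setup. The algebraic identity
$$\omega_{j_1} - \omega_{j_2} = \frac{(|j_1|_g^2 - |j_2|_g^2)(|j_1|_g^2 + |j_2|_g^2)}{\omega_{j_1} + \omega_{j_2}},$$
combined with A.1, then yields $|\omega_{j_1} - \omega_{j_2}| \geq C_2(|j_1| + |j_2|)$ for $j_1, j_2$ in different blocks, so $\delta = 1$.

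The bulk of the work lies in A.2. For a fixed multi-index $\mathcal{J} = (j_1,\dots,j_d)$ with $|j_l| < N$ and signs $\sigma_l$, I would view $\Phi(g) := \sum_l \sigma_l \omega_{j_l}(g)$ as a smooth function on $\mathcal{G}_0(\zeta_1,\zeta_2) \subset \mathbb{R}^{\tau^*}$ through the dependence $|j|_g^2 = \sum_{i \leq k} g_{ik} (j)_i(j)_k$. By a Vandermonde-type expansion analogous to the determinant lemma of the fractional Schr\"odinger section, I would show that some iterated partial derivative of $\Phi$ along well-chosen directions in metric-space satisfies $|\partial^{\bullet}\Phi| \geq N^{-Cd^q}$ for an effective exponent $q$ depending on $\tau^*$. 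Lemmas \ref{det} and \ref{measure} then give
$$\bigl|\{g \in \mathcal{G}_0(\zeta_1,\zeta_2) : |\Phi(g)| < \gamma N^{-\tau d^p}\}\bigr| \lesssim \gamma N^{-d^2},$$
and summing over the at most $N^{Cd}$ admissible multi-indices produces $|\mathcal{G}_\gamma| \lesssim \gamma$. Setting $\mathcal{G}^{\mathtt{res}} := \bigcap_{\gamma > 0}\mathcal{G}_\gamma$ yields a zero-measure exceptional set, and for $g$ outside it the frequencies satisfy A.2 with the corresponding exponent $p$. Applying the Main Theorem with $f(x) = x^\theta$ then gives the stated Gevrey stability time $\exp(C_{\mathtt{fin}}|\ln\varepsilon|^2/\ln|\ln\varepsilon|)$.

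The main obstacle is the determinantal lower bound for $\Phi$. Unlike the single-parameter case with $m$, the moduli space of metrics has dimension $\tau^* = \mathsf{d}(\mathsf{d}+1)/2$, and the derivatives $\partial_{g_{ik}}\omega_{j_l}$ are rational functions of the coordinates of $j_l$ whose linear independence across the multi-index $\mathcal{J}$ is not automatic; one must carefully select $d$ linearly independent differentiation directions in metric-space and exploit the nondegeneracy built into $\mathcal{G}_\Gamma$ to rule out degenerate configurations. Once this determinantal step is in place, the passage through Lemmas \ref{det} and \ref{measure}, the summation over multi-indices, and the application of the Main Theorem are routine.
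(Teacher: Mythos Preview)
Your overall plan---verify A.1, A.3 deterministically, establish A.2 off a null set, then invoke the Main Theorem with $f(x)=x^\theta$---matches the paper. The treatment of A.1 and A.3 with $\beta=2$, $\delta=1$ is correct and in line with what the paper does.

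The substantive divergence is in how A.2 is obtained. You propose to work directly in the $\tau^*$-dimensional metric space, differentiating $\Phi(g)=\sum_l\sigma_l\omega_{j_l}(g)$ in several metric directions and hoping for a Vandermonde-type lower bound. The paper instead performs a \emph{radial reduction}: for each unit metric $\bar g\in\partial B_1\cap\mathcal{G}$ it writes $g=\zeta\bar g$, so that $\omega_j=\zeta^2\sqrt{|j|_{\bar g}^4+m/\zeta^4}$, and then substitutes $\xi=m/\zeta^4$ to obtain a genuine \emph{one}-parameter family $\Omega_j(\xi)=\sqrt{|j|_{\bar g}^4+\xi}$. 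This is exactly the shape treated in the fractional Schr\"odinger section, and the Vandermonde determinant is then bounded below using the Diophantine property of $\bar g\in\mathcal{G}$ to separate the nodes $|j_r|_{\bar g}^2$. A Fubini argument over $\partial B_1\cap\mathcal{G}$ (which has full surface measure in $\partial B_1\cap\mathcal{G}_0$) promotes the one-parameter null set to a null set in $\mathcal{G}_0(\zeta_1,\zeta_2)$.

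What the paper's approach buys is that the hard step---the determinantal lower bound---becomes a direct reuse of the single-variable machinery, with the Diophantine structure of $\mathcal{G}$ supplying node separation rather than having to be discovered inside a multi-directional derivative computation. Your multi-parameter route is not obviously wrong, but the ``carefully select $d$ linearly independent differentiation directions'' step is exactly the point where the argument is still a promissory note; the paper sidesteps it entirely by scaling.
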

	\begin{theorem}[Logarithmic Ultra-differential case]\label{ex3thm2}
		For $0<\zeta_1<\zeta_2$, there exists a zero measure set $\mathcal{G}^{\mathtt{res}}\subset\mathcal{G}_0(\zeta_1,\zeta_2)$ such that $\forall g\in\mathcal{G}_0(\zeta_1,\zeta_2)\setminus\mathcal{G}^{\mathtt{res}},  s>S_{\mathtt{fin}},\varepsilon<\varepsilon_0,$ if initial data $\psi_0$ of \eqref{ex3} satisfies $\Vert \psi_0\Vert_{s,q}^U=\varepsilon,$ then the solution of \eqref{ex3} satisfies
		$$\Vert\psi(t)\Vert_{s,q}^U\leq C_{\mathtt{sta}}\varepsilon ,\ \forall |t|\leq \frac{1}{C_{\mathtt{sta}}}e^{C_{\mathtt{fin}}|\ln\varepsilon|^{1+a}},$$
		where $a\leq\frac{q-1}{3q+1}.$
	\end{theorem}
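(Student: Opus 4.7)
The plan is to deduce this theorem from Theorem \ref{res2} by verifying that the beam-equation frequencies $\omega_j = \sqrt{|j|_g^4+m}$ satisfy \textbf{Assumption 2} with $\beta=2$ and $p=3$ for every $g$ outside a zero-measure subset $\mathcal{G}^{\mathtt{res}}\subset\mathcal{G}_0(\zeta_1,\zeta_2)$. The choice $p=3$ is exactly what Theorem \ref{res2} needs to yield the stated exponent $a\leq\tfrac{q-1}{3q+1}=\tfrac{q-1}{qp+1}$.

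I would first dispatch the easy assumptions. For A.1 with $\beta=2$: since $g\in\mathcal{G}_0(\zeta_1,\zeta_2)$, there exist constants $c_1,c_2>0$ such that $c_1|j|^2\leq|j|_g^2\leq c_2|j|^2$, hence $\omega_j/|j|^2\in[1/C_0,C_0]$ for $|j|$ large. For A.3, I define blocks $\Omega_\alpha:=\{j:|j|_g^2\in[\alpha C_1,(\alpha+1)C_1]\}$, which have uniformly bounded $j$-diameter because $|j|_g^2\asymp|j|^2$. Between distinct blocks, $\bigl||j_1|_g^2-|j_2|_g^2\bigr|\geq C_1$ and the algebraic identity $\omega_{j_1}-\omega_{j_2}=(|j_1|_g^4-|j_2|_g^4)/(\omega_{j_1}+\omega_{j_2})$ forces $|\omega_{j_1}-\omega_{j_2}|\gtrsim|j_1|+|j_2|$, so $\delta=1$ works.

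The substantive work is A.2. For a fixed multi-index $\mathcal{J}=(j_1,\dots,j_d)$ with $|j_l|\leq N$ and a sign pattern $(\sigma_l)$ such that $\sum_l\sigma_l\omega_{j_l}(g)$ does not vanish identically, I would estimate the $g$-measure of the sublevel set $\{|\sum_l\sigma_l\omega_{j_l}(g)|<\gamma/N^{\tau d^3}\}$ using a multi-parameter Vandermonde argument analogous to the fractional-Schr\"odinger lemma: since each $\omega_{j_l}$ depends on $g$ only through the quadratic form $|j_l|_g^2=\sum_{i\leq j}g_{ij}(j_l)_i(j_l)_j$, successive derivatives along a well-chosen direction in the $\tau^*=\mathsf{d}(\mathsf{d}+1)/2$-dimensional $g$-space produce powers of $|j_l|_g^2$ multiplied by prefactors $(|j_l|_g^4+m)^{-k+1/2}$. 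Factoring the prefactors column-wise, one recovers a Vandermonde determinant in the variables $|j_l|_g^2$, bounded below by $\prod_{r<s}\bigl||j_r|_g^2-|j_s|_g^2\bigr|$, which is controlled from below by the Diophantine property $g\in\mathcal{G}_\Gamma$ built into the definition of $\mathcal{G}$.

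Applying Lemma \ref{measure} to each fixed $\mathcal{J}$ then gives $|\mathcal{G}_{\mathcal{J},\gamma}^{\mathtt{res}}|\leq C\gamma^{1/d}N^{Cd^2}$; summing over the $O(N^{d\mathsf{d}})$ admissible multi-indices and the $2^d$ sign patterns, and taking $\tau$ large enough so that the total stays below $\gamma$, the set $\mathcal{G}_\gamma^{\mathtt{res}}$ of metrics violating \eqref{nr} with exponent $\tau d^3$ has measure at most $\gamma$. Setting $\mathcal{G}^{\mathtt{res}}:=\bigcap_{\gamma>0}\mathcal{G}^{\mathtt{res}}_\gamma$ yields the desired null set, and the pairing structure in A.2 (the case of identically vanishing frequency sums) follows from algebraic independence of the surds $\sqrt{|j_l|_g^4+m}$ for generic $g$. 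Once A.1--A.3 are in place, Theorem \ref{res2} applies with $p=3$, producing the stability time $\exp(C_{\mathtt{fin}}|\ln\varepsilon|^{1+a})$ for $a\leq\tfrac{q-1}{3q+1}$. The main obstacle is selecting the differentiation directions so that the Vandermonde structure remains genuinely non-degenerate whenever the frequency sum does not vanish identically; everything downstream of that lower bound is routine.
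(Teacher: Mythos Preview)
Your overall strategy---verify A.1--A.3 for the beam frequencies, then invoke Theorem~\ref{res2} with $p=3$---is exactly the paper's, and your treatment of A.1 and A.3 matches. The divergence is in A.2, and there your sketch has a real gap.

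You propose to differentiate $\sum_l\sigma_l\omega_{j_l}(g)$ along ``a well-chosen direction'' in the $\tau^*$-dimensional metric space and then apply Lemma~\ref{measure}. But Lemma~\ref{measure} is a one-variable sublevel estimate; you cannot apply it directly to conclude a $\tau^*$-dimensional bound $|\mathcal G^{\mathtt{res}}_{\mathcal J,\gamma}|\le C\gamma^{1/d}N^{Cd^2}$ without an additional slicing argument. Moreover, you invoke the Diophantine lower bound $g\in\mathcal G_\Gamma$ to control the Vandermonde, but $g$ is the very parameter you are varying, so $\Gamma$ is not fixed along your variation---this has to be decoupled.

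The paper resolves both issues at once by writing $g=\zeta\bar g$ with $\bar g\in\partial B_1$ fixed and $\zeta\in[\zeta_1,\zeta_2]$ the sole running parameter. After the substitution $\xi=m/\zeta^4$, the frequencies become $\Omega_j(\xi)=\sqrt{|j|_{\bar g}^4+\xi}$, and the Wronskian in $\xi$ is a genuine Vandermonde in $x_l=(|j_l|_{\bar g}^4+\xi)^{-1}$. The separation $\bigl||j_r|_{\bar g}^2-|j_s|_{\bar g}^2\bigr|\ge\Gamma/(2N)^{2\tau^*}$ now uses the Diophantine property of the \emph{fixed} $\bar g\in\mathcal G$, so $\Gamma$ is constant throughout the one-dimensional estimate. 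Lemma~\ref{measure} then applies cleanly in the single variable $\xi$, giving a null set $\mathcal Z^{\mathtt{res}}_{\bar g}\subset[\zeta_1,\zeta_2]$ for each $\bar g$. Finally, Fubini over $\bar g\in\partial B_1\cap\mathcal G$ (which has full $\mu_1$-measure in $\partial B_1\cap\mathcal G_0$) transports the radial null sets to a null set $\mathcal G^{\mathtt{res}}\subset\mathcal G_0(\zeta_1,\zeta_2)$. This radial-scaling reduction is precisely the ``well-chosen direction'' you were looking for, and the Fubini step is the missing bridge from one-dimensional to $\tau^*$-dimensional measure.
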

	Like previous subsection, we can choose $\beta=2,\delta=1$ to satisfy assumptions A.1, A.3. We mainly construct the resonant set that violate assumption A.2 and make measure estimate here. 
	
	We can firstly get the following result:
	\begin{proposition}
		For a fix $\bar{g}\in\mathcal{G}_0,$ there exists a zero measure set $\mathcal{Z}^{\mathtt{res}}_{\bar{g}}\subset[\zeta_1,\zeta_2]$, such that when $\zeta\in\mathcal{Z}^{\mathtt{nr}}_{\bar{g}}:=[\zeta_1,\zeta_2]\setminus\mathcal{Z}^{\mathtt{res}}_{\bar{g}},$
		the frequency $$\omega_{j}=\sqrt{|j|_g^4+m}$$
		satisfies assumption A.2, where metric $g=\zeta\bar{g}.$ 
	\end{proposition}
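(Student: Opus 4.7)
The plan is to mimic the proof of the preceding measure-estimate proposition from the fractional Schrödinger subsection, with the mass $m$ replaced by the scaling parameter $\zeta$. Setting $\xi=\zeta^2$ and $a_j=|j|_{\bar{g}}^4$, the frequencies take the form $\omega_j(\zeta)=(\xi a_j+m)^{1/2}$, formally identical to the fractional Schrödinger frequency $(|j|^2+m)^{\eta}$ at $\eta=\tfrac12$ with $|j|^2$ replaced by $a_j$. Since $\zeta\in[\zeta_1,\zeta_2]$ with $\zeta_1>0$, derivatives in $\xi$ and in $\zeta$ differ by bounded factors, so I would work with $\partial_\xi^l\omega_j=c_l(\xi a_j+m)^{1/2-l}a_j^l$ where $c_l=\prod_{n=0}^{l-1}(\tfrac12-n)$. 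This is exactly the form that makes the Vandermonde computation underlying Lemma~\ref{det} available.

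\textbf{Reduction to distinct frequencies and Wronskian lower bound.} The new difficulty compared to the fractional case is that several distinct lattice indices $j$ may share the same $\bar{g}$-length, so one cannot blindly form the Wronskian of $\omega_{j_1},\dots,\omega_{j_d}$. Given $\mathcal{J}=\{j_1,\dots,j_d\}$ with signs $\sigma$, I would first list the distinct values $\{b_1,\dots,b_K\}$ appearing in $\{a_{j_l}\}_{l=1}^d$ and set $c_r:=\sum_{l:\,a_{j_l}=b_r}\sigma_l\in\mathbb{Z}$, so that $f(\zeta):=\sum_l\sigma_l\omega_{j_l}(\zeta)=\sum_r c_r\omega_{b_r}(\zeta)$. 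If $c_r=0$ for every $r$, then $f\equiv0$ and $\mathcal{J}$ already falls into the allowed-resonance case of A.2 (opposite-sign pairs within each equal-$a$ block). Otherwise, after discarding the zero coefficients we are left with a non-trivial combination over $K$ distinct $\omega_{b_r}$'s. Forming their Wronskian $D$ in $\xi$, factoring $c_l$ from row $l$ and $\omega_{b_r}=(\xi b_r+m)^{1/2}$ from column $r$ reduces $D$ to a Vandermonde determinant in $x_r=b_r/(\xi b_r+m)$, yielding
$$|D|=\left|\prod_{l=0}^{K-1}c_l\right|\prod_{r=1}^{K}\omega_{b_r}\prod_{1\le r<s\le K}\frac{m\,|b_s-b_r|}{(\xi b_s+m)(\xi b_r+m)}\ge \frac{C_{\bar{g}}}{N^{\kappa d^2}},$$
where $\kappa=\kappa(\bar{g})$ absorbs the separation $\min_{b_s\neq b_r}|b_s-b_r|$ of distinct $\bar{g}$-lengths among lattice points in $|j_l|\le N$.

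\textbf{Measure estimate and main obstacle.} Cramer's rule applied to the linear system $M\vec{c}=(\partial_\xi^l f)_l$, with $M$ the Wronskian matrix, forces, since $|c_r|\ge 1$ for some $r$, the existence of an index $(i)\le K-1$ with $|\partial_\xi^{(i)}f(\zeta)|\ge |D|/\|M\|^{K-1}\ge C_{\bar{g}}N^{-\kappa' d^2}$. Lemma~\ref{measure} then yields
$$\bigl|\bigl\{\zeta\in[\zeta_1,\zeta_2]:\,|f(\zeta)|<\gamma/N^{4d^3}\bigr\}\bigr|\le C_{\bar{g}}\,\gamma\,N^{-d}.$$
Summing over the at most $(2N+1)^{\mathsf{d}d}2^d$ choices of $(\mathcal{J},\sigma)$ with $|j_l|\le N$ and then over all $N\ge 1$ produces a set $\mathcal{Z}^{\mathtt{res}}_{\bar{g},\gamma}$ of measure $\le C_{\bar{g}}\gamma$ outside of which A.2 holds with parameter $\gamma$ and exponent $p=3$ (consistent with $a\le(q-1)/(3q+1)$ in the theorems of this subsection). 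Setting $\mathcal{Z}^{\mathtt{res}}_{\bar{g}}:=\bigcap_{\gamma>0}\mathcal{Z}^{\mathtt{res}}_{\bar{g},\gamma}$ gives the desired zero-measure set. The delicate point is the dependence of $\kappa$ on $N$: for a generic $\bar{g}\in\mathcal{G}_0$ without a Diophantine hypothesis, the lattice separations $|b_s-b_r|$ need not be polynomially bounded below in $N$, so $\kappa$ might blow up and spoil the sum over $N$. The cleanest fix — implicit in the admissible-metric framework of Section~5 — is to restrict to $\bar{g}\in\mathcal{G}$, where the Diophantine bound $|\sum\bar{g}_{ij}\ell_{ij}|\ge\Gamma(\sum|\ell_{ij}|)^{-\tau^*}$ gives $|b_s-b_r|\ge\Gamma(CN)^{-2\tau^*}$ and recovers uniform exponents independent of $\bar{g}$.
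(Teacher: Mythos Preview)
Your proposal is correct and follows the same Wronskian--Vandermonde--Lemma~\ref{measure} scheme as the paper, with one cosmetic difference in the change of variables. The paper factors $\omega_j(\zeta)=\zeta^2\Omega_j$ and sets $\xi=m/\zeta^4$, so that the rescaled frequencies $\Omega_j=(|j|_{\bar g}^4+\xi)^{1/2}$ literally coincide with the fractional-Schr\"odinger form at $\eta=\tfrac12$ and the Vandermonde is in $x_l=1/(|j_l|_{\bar g}^4+\xi)$; your choice $\xi=\zeta^2$ keeps $\omega_j=(\xi a_j+m)^{1/2}$ and produces a Vandermonde in $x_r=b_r/(\xi b_r+m)$, which works just as well. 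You are in fact more careful on two points the paper leaves implicit: you collapse indices with equal $|j|_{\bar g}$ into integer coefficients $c_r$ before forming the Wronskian (otherwise $D$ could vanish), and you correctly flag that the separation bound $|b_s-b_r|\gtrsim \Gamma N^{-2\tau^*}$ needs $\bar g\in\mathcal{G}$ rather than merely $\mathcal{G}_0$ --- the paper's proof uses exactly this Diophantine input even though the proposition is stated for $\bar g\in\mathcal{G}_0$.
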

	\begin{proof}
		We can make substitute for
		$$\omega_j(\zeta)=\zeta^2\Omega_j,\ \Omega_j:=\sqrt{|j|_{\bar{g}}^4+\frac{m}{\zeta^4}}, $$
		and use
		$$(\zeta_1,\zeta_2)\to(\xi_1,\xi_2):=(\frac{m}{\zeta_2},\frac{m}{\zeta_1}),\zeta\mapsto\xi:=\frac{m}{\zeta^4}.$$
		Notice that this map is an analytic diffeomorphism, we just need to prove that there exists a zero measure set $\mathcal{K}^{\mathtt{res}}$ such that frequencies
		$$\Omega_j=\sqrt{|j|_g^4+\xi}$$
		satisfy assumption A.2 for $\xi\in[\xi_1,\xi_2]\setminus\mathcal{K}^{\mathtt{res}}.$ Then we can get
		$$
		D=\prod_{l=1}^{k}\Omega_{j_l}\prod_{l=0}^{k-1}(\frac{1}{2}-l)^{k-l}
		\begin{vmatrix}
			1&  1&  \dots&  1& \\
			x_1&  x_2&  \dots&  x_k& \\
			\dots&  \dots&  \dots&  \dots& \\
			x_1^{k-1}& x_2^{k-1} &  \dots& x_k^{k-1} &
		\end{vmatrix},
		$$
		for
		$$
		D:=\begin{vmatrix}\Omega_{j_1}&\Omega_{j_2}&...&\Omega_{j_k}\\
			\frac{d\Omega_{j_1}}{d\xi}&\frac{d\Omega_{j_2}}{d\xi}&...&\frac{d\Omega_{j_k}}{d\xi}\\
			\dots&\dots&\dots&\dots\\
			\frac{d^{k-1}\Omega_{j_1}}{d\xi^{k-1}}&\frac{d^{k-1}\Omega_{j_2}}{d\xi^{k-1}}&\dots&\frac{d^{k-1}\Omega_{j_k}}{d\xi^{k-1}}\end{vmatrix}, 
		$$
		where $x_l=\frac{1}{|j_l|_g^4+\xi}.$ To estimate the lower bound of Vandermonde determinant, we need to use the definition of $\mathcal{G}$ to get separation between $|j_r|_g^4$ and $|j_s|_{\bar{g}}^4$. If we denote $j_r=R,j_s=S$ in this proof, we have 
		\begin{align*}
		\left||R|_{\bar{g}}^2-|S|_{\bar{g}}^2\right|&\geq|\sum_{i,j}\bar{g}_{i,j}R_iR_j-S_iS_j|\\
		&\geq\frac{\Gamma}{(\sum_{i,j}R_iR_j-S_iS_j)^{\tau^*}}\\
		&\geq\frac{\Gamma}{(\sum_{i,j}|R_i||R_j|+|S_i||S_j|)^{\tau^*}}\\
		&\geq\frac{\Gamma}{(|R|^2+|S|^2)^\tau}\geq\frac{\Gamma}{(2N)^{2\tau^*}}.
		\end{align*} 
		Then the Vandermonde determinant implies
		\begin{align*}
			\prod_{1\leq r<s\leq k}|x_r-x_s|&=\prod_{1\leq r<s\leq k}|\frac{1}{|j_r|_g^4+\xi}-\frac{1}{|j_s|_g^4+\xi}|\\
			&\geq\prod_{1\leq r<s\leq k}\frac{(|j_r|_g^2+|j_s|_g^2)||R|_g^2-|S|_g^2|}{(|j_r|_g^4+\xi)(|j_s|_g^4+\xi)}\\
			&\geq\prod_{1\leq r<s\leq k}\frac{||R|_g^2-|S|_g^2|}{(|j_r|_g^4+\xi)(|j_s|_g^4+\xi)}\\
			&\geq\frac{\Gamma^{d^2}}{N^{4\tau^*d^2}}.
		\end{align*}
		We then define a non-resonant set for a fixed multi-index $\mathcal{J}=\{j_1,\dots,j_d\}$:
		$$\mathcal{K}_{\mathcal{J},\gamma}=\{\xi\in[\xi_1,\xi_2]\mid|\sum_{l=1}^{d}\sigma_l\Omega_{j_l}(\xi)|\geq\frac{\gamma}{N^{4(\tau^*+1)d^3}}\},$$ 
		where we take $\gamma=\Gamma^{d^3+d}$ and we can make the measure estimate as the previous subsection
		\begin{align*}
			|\mathcal{K}_{\mathcal{J},\gamma}|\leq2\frac{N^{4\tau^*d^2}}{\Gamma^{d^2}}(\frac{\gamma}{N^{4(\tau^*+1)d^3}})^{\frac{1}{d}}\leq\frac{2\gamma}{N^{4d^2}}.
		\end{align*}
		Then we define 
		$$\mathcal{K}_{\gamma}:=\{\xi\in[\xi_1,\xi_2]\mid|\sum_{l=1}^{d}\sigma_l\Omega_{j_l}(\xi)|\geq\frac{\gamma}{N^{4d^5}},\exists\{j_1,\dots,j_d\},|j_l|<N\},$$
		and
		$$\mathcal{K}^{\mathtt{res}}=\bigcap_{\gamma>0}\mathcal{K}_\gamma$$
		is a zero measure set as we desired.
	\end{proof}
	We now proceed to discuss the non-resonant property of the metric set.
	Let $\partial B_r:=\{\Vert g\Vert_2=r\}$ denote a sphere in the metric space, let $\mu_r$ represent the $\tau^*-1$ dimensional measure on $\partial B_r$, and let $\lambda$ be the $\tau^*$ dimension measure in metric space. When $A\subset[\zeta_1,\zeta_2]$, we define $\bar{g}A=\{g\mid g=\bar{g}a,a\in A\}.$ We will prove that the non-resonant set
	$$\mathcal{G}^{\mathtt{nr}}(\zeta_1,\zeta_2):=\cup_{\bar{g}\in\partial B_1\cap\mathcal{G}}\bar{g}\mathcal{Z}_{\bar{g}}^{\mathtt{nr}}$$
	has full measure in $\mathcal{G}_0(\zeta_1,\zeta_2)$.
	\begin{proposition}
		$\mathcal{G}^{\mathtt{res}}=\mathcal{G}_0\setminus\mathcal{G}^{\mathtt{nr}}(\zeta_1,\zeta_2)$ is the zero measure set that make frequency violate assumption A.2.
	\end{proposition}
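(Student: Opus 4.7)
The plan is to decompose $\mathcal{G}_0(\zeta_1,\zeta_2)$ via ``polar coordinates'' adapted to the norm $\|\cdot\|_2$, apply the previous proposition to each radial slice, and then take care of the angular directions by a classical Diophantine measure estimate.

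First, identify the two sources of resonance. A metric $g\in\mathcal{G}_0(\zeta_1,\zeta_2)$ fails to lie in $\mathcal{G}^{\mathtt{nr}}(\zeta_1,\zeta_2)$ precisely when, writing $g=\zeta\bar{g}$ with $\bar{g}=g/\|g\|_2\in\partial B_1\cap\mathcal{G}_0$ and $\zeta=\|g\|_2\in[\zeta_1,\zeta_2]$, either (i) $\bar{g}\notin\mathcal{G}$, or (ii) $\bar{g}\in\partial B_1\cap\mathcal{G}$ but $\zeta\in\mathcal{Z}^{\mathtt{res}}_{\bar{g}}$. Thus
\[
\mathcal{G}^{\mathtt{res}}\subset \underbrace{\bigl\{\zeta\bar{g}:\bar{g}\in\partial B_1\cap(\mathcal{G}_0\setminus\mathcal{G}),\ \zeta\in[\zeta_1,\zeta_2]\bigr\}}_{=:A}\ \cup\ \underbrace{\bigl\{\zeta\bar{g}:\bar{g}\in\partial B_1\cap\mathcal{G},\ \zeta\in\mathcal{Z}^{\mathtt{res}}_{\bar{g}}\bigr\}}_{=:B},
\]
and the goal reduces to showing $\lambda(A)=\lambda(B)=0$.

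For the set $B$, the previous proposition gives, for each fixed $\bar{g}\in\partial B_1\cap\mathcal{G}$, that $\mathcal{Z}^{\mathtt{res}}_{\bar{g}}$ is a one-dimensional Lebesgue null subset of $[\zeta_1,\zeta_2]$. Since the map $(\zeta,\bar{g})\mapsto \sum_{l}\sigma_l\Omega_{j_l}$ is jointly continuous in $(\zeta,\bar{g})$, the sets $\mathcal{K}_\gamma$ and hence $\mathcal{Z}^{\mathtt{res}}_{\bar g}=\bigcap_{n}\mathcal{K}_{1/n}$ are jointly Borel measurable in the pair $(\zeta,\bar g)$. Expressing the Lebesgue measure in polar coordinates,
\[
\lambda(B)=\int_{\partial B_1\cap\mathcal{G}}\int_{\mathcal{Z}^{\mathtt{res}}_{\bar{g}}}\zeta^{\tau^*-1}\,d\zeta\,d\mu_1(\bar{g}),
\]
and Fubini together with the slicewise nullity gives $\lambda(B)=0$.

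For the set $A$, it suffices to show that $\partial B_1\cap(\mathcal{G}_0\setminus\mathcal{G})$ has $\mu_1$-measure zero; then $A$, being its radial extension to $[\zeta_1,\zeta_2]$, also has $\lambda$-measure zero by the same polar decomposition. Here one invokes the standard Diophantine measure estimate on the sphere: for each multi-index $\ell\in\mathbb{Z}^{\tau^*}\setminus\{0\}$ the set $\{\bar g\in\partial B_1:|\sum_{i\le j}\bar g_{ij}\ell_{ij}|<\Gamma/(\sum_{i\le j}|\ell_{ij}|)^{\tau^*}\}$ is a thin strip around a hyperplane, with $\mu_1$-measure bounded by $C\,\Gamma/(\sum|\ell_{ij}|)^{\tau^*}$. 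Summing over $\ell\neq 0$ gives $\mu_1(\partial B_1\setminus\mathcal{G}_\Gamma)\lesssim\Gamma$, and hence $\mu_1(\partial B_1\cap(\mathcal{G}_0\setminus\mathcal{G}))\le\inf_{\Gamma>0}\mu_1(\partial B_1\setminus\mathcal{G}_\Gamma)=0$.

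The main obstacle is the second step: the measurability bookkeeping needed to apply Fubini correctly, and the verification that $\bar g$-dependent sets $\mathcal{Z}^{\mathtt{res}}_{\bar g}$ arrange into a jointly measurable family. Once this is in place, the two parts combine to give $\lambda(\mathcal{G}^{\mathtt{res}})=0$, so that for $g\in\mathcal{G}^{\mathtt{nr}}(\zeta_1,\zeta_2)$ the frequencies $\omega_j=\sqrt{|j|_g^4+m}$ satisfy Assumption \textbf{A.2}, completing the proof.
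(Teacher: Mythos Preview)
Your approach is essentially the same as the paper's: both use the polar decomposition $g=\zeta\bar g$, invoke the previous proposition to kill the radial contribution via Fubini, and use the full-measure property of $\mathcal{G}$ on the sphere to kill the angular contribution. The paper computes $\lambda(\mathcal{G}^{\mathtt{nr}})$ directly and shows it equals $\lambda(\mathcal{G}_0(\zeta_1,\zeta_2))$, while you compute $\lambda(\mathcal{G}^{\mathtt{res}})$ by splitting it into your sets $A$ and $B$; these are dual formulations of the same argument. Your explicit treatment of measurability for Fubini is a point the paper glosses over.

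One small slip: your stated strip bound $\mu_1\bigl(\{\bar g:|\bar g\cdot\ell|<\Gamma/|\ell|_1^{\tau^*}\}\bigr)\le C\,\Gamma/|\ell|_1^{\tau^*}$ does not sum over $\ell\in\mathbb{Z}^{\tau^*}\setminus\{0\}$, since $\sum_\ell |\ell|_1^{-\tau^*}$ diverges logarithmically. The correct bound picks up an extra factor $|\ell|_2^{-1}$ from the width of the hyperplane slab, giving $\mu_1$-measure $\lesssim \Gamma/|\ell|_1^{\tau^*+1}$, which does sum. The paper sidesteps this computation by simply asserting $\lambda(\mathcal{G}(\zeta_1,\zeta_2))=\lambda(\mathcal{G}_0(\zeta_1,\zeta_2))$ as known and deducing $\mu_1(\partial B_1\cap\mathcal{G})=\mu_1(\partial B_1\cap\mathcal{G}_0)$ by scaling.
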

	\begin{proof}
		From the setting of $\mathcal{G}$ we can know that \begin{align*}
			\lambda(\mathcal{G}(\zeta_1,\zeta_2))&=\lambda(\mathcal{G}_0(\zeta_1,\zeta_2)),\\
		\int_{\zeta_1}^{\zeta_2}\mu_{\zeta}(\mathcal{G}\cap\partial B_{\zeta})\d \zeta	&=\int_{\zeta_1}^{\zeta_2}\mu_{\zeta}(\mathcal{G}_0\cap\partial B_{\zeta})\d \zeta.
		\end{align*}
    Use the scaling properties, we have
	\begin{align*}
		\int_{\zeta_1}^{\zeta_2}\zeta^{\tau^*-1}\mu_{1}(\mathcal{G}\cap\partial B_{1})\d \zeta&=\int_{\zeta_1}^{\zeta_2}\zeta^{\tau^*-1}\mu_{1}(\mathcal{G}_0\cap\partial B_{1})\d \zeta,\\
		\frac{\zeta_2^{\tau^*}-\zeta_1^{\tau^*}}{\tau^*}\mu_{1}(\mathcal{G}\cap\partial B_{1})&=	\frac{\zeta_2^{\tau^*}-\zeta_1^{\tau^*}}{\tau^*}\mu_{1}(\mathcal{G}_0\cap\partial B_{1}),\\
	\mu_{1}(\mathcal{G}\cap\partial B_{1})	&=\mu_{1}(\mathcal{G}_0\cap\partial B_{1}).
	\end{align*}
	Then by Fubini's theorem, we have
	\begin{align*}
		\lambda(\mathcal{G}^{\mathtt{nr}}(\zeta_1,\zeta_2))&=\int_{\partial B_1\cap\mathcal{G}}|\mathcal{Z}_{\bar{g}}^{\mathtt{nr}}|\d\mu_{1}(\bar{g})\\
		&=(\zeta_2-\zeta_1)\int_{\partial B_1\cap\mathcal{G}}\d\mu_{1}(\bar{g})\\
		&=(\zeta_2-\zeta_1)\mu_{1}(\mathcal{G}\cap\partial B_{1})\\
		&=(\zeta_2-\zeta_1)\mu_{1}(\mathcal{G}_0\cap\partial B_{1})\\
		&=\lambda(\mathcal{G}_0(\zeta_1,\zeta_2)).
	\end{align*} 
	And clearly $\mathcal{G}_0\setminus\mathcal{G}^{\mathtt{nr}}(\zeta_1,\zeta_2)\subset\mathcal{G}_0(\zeta_1,\zeta_2)$, so $\lambda(\mathcal{G}^{\mathtt{res}})\leq\lambda(\mathcal{G}_0(\zeta_1,\zeta_2))-\lambda(\mathcal{G}^{\mathtt{nr}}(\zeta_1,\zeta_2))=0,$
	which comes to the conclusion.
	\end{proof}
	From the above proposition, we has given the full measure set $\mathcal{G}^{\mathtt{nr}}(\zeta_1,\zeta_2)$ ensuring the validity of Assumption A.2.

	\appendix
	\renewcommand{\thesection}{\Alph{section}}
	\section*{Appendices}
	\section{Constants}
	\begin{align*}
		C_{\mathtt{sep}}&=C_0^{\frac{2}{\beta}},\\
	C_{\mathtt{deno}}&=(\frac{C_0}{C_2}+C_0^2)^\beta,\\
	C_{\mathtt{exp}}&=\tau(1+\frac{\beta}{\delta})+1,\\
	C_{\mathtt{estP}}&=\frac{64\e^2C_P^2}{\gamma},\\
	C_{\mathtt{thre}}&=\max\{\frac{32C_P\e}{\gamma},2,\frac{24\e^2}{\gamma},\frac{C_{\mathtt{estP}}16\e}{\gamma},\e^{2sC_ff(C_1)}\},\\
	C_{\mathtt{rema}}&=\max\{48\e(\e^{\frac{1}{16\e}}-1),C_{\mathtt{estP}},\e C_P,C_{\mathtt{deno}}\},\\
	C_{\mathtt{fin}}&=2^{p+2}C_{\mathtt{exp}},\\
	D_{\mathtt{fin}}&=\max\{4C_{\mathtt{rema}},\frac{32\e^2}{\gamma}\},\\
	S_{\mathtt{fin}}&=s_0+C_{\mathtt{fin}},\\
	C_{\mathtt{sta}}&=\e^{-2sC_ff(C_1)}.\\
	\end{align*}

	\section{Technical Lemmas }
	\begin{lemma}[Norm estimate for $P$]\label{norm}
		When $s>s_0$, for any $P\in\mathcal{P}_d,d\geq3,$ we have $$|P|_{r,s}\leq C_Pr^{d-2},$$ where $s_0$ satisfies $\sum_{J\in\mathcal{Z}}e^{(2C_f-2)s_0f(|J|)}<\frac{1}{3}.$ 
	\end{lemma}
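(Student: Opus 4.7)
The plan is to expand $X_P$ explicitly, introduce the weighted coordinates $v_J:=|u_J|\e^{sf(\langle j\rangle)}$ (so that $\sum_J v_J^2=\Vert u\Vert_s$), and apply Cauchy--Schwarz in a form whose ``gain'' comes from combining the momentum conservation $j=\sigma\sum_{l}\sigma_lj_l$ with the sub-additivity hypothesis A.0 on $f$. The summability condition defining $s_0$ is then invoked at the very end to control the residual series.

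By symmetry of the coefficients $P_{\mathcal{J}}$, the $J$-component of the Hamiltonian vector field can be written as
$$(X_P)_J=-\sigma\i d\!\sum_{(J_1,\dots,J_{d-1})\in\mathcal{A}(J)}\!P_{\bar J,J_1,\dots,J_{d-1}}u_{J_1}\cdots u_{J_{d-1}},$$
where $\mathcal{A}(J)$ is the set of $(d-1)$-tuples such that $(\bar J,J_1,\dots,J_{d-1})\in\mathcal{I}_d$. Using $|P_{\mathcal{J}}|\leq C_P$ and substituting $|u_{J_l}|=v_{J_l}\e^{-sf(\langle j_l\rangle)}$, I would multiply by $\e^{2sf(\langle j\rangle)}$ and combine momentum conservation (which forces $\langle j\rangle\leq\sum_l\langle j_l\rangle$) with A.0 to derive the key pointwise inequality
$$sf(\langle j\rangle)-s\sum_{l=1}^{d-1}f(\langle j_l\rangle)\leq -s(1-C_f)\sum_{l\neq m}f(\langle j_l\rangle),$$
with $m$ the argmax of $\langle j_l\rangle$. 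This pushes $\e^{2sf(\langle j\rangle)}$ inside the inner sum and produces
$$\e^{2sf(\langle j\rangle)}|(X_P)_J|^2\leq d^2C_P^2\Bigl(\sum_{\mathcal{A}(J)}\prod_l v_{J_l}\cdot\e^{-s(1-C_f)\sum_{l\neq m}f(\langle j_l\rangle)}\Bigr)^{\!2}.$$

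Next, I would Cauchy--Schwarz the inner sum to split it into an $\ell^2$-factor $\sum_{\mathcal{A}(J)}\prod_l v_{J_l}^2$ and a weight factor
$$S(J):=\sum_{\mathcal{A}(J)}\e^{-2s(1-C_f)\sum_{l\neq m}f(\langle j_l\rangle)}.$$
To bound $S(J)$ uniformly in $J$, I split over the possible argmax $m\in\{1,\dots,d-1\}$, drop the argmax constraint, and observe that for each fixed $m$ the variable $j_m$ is determined by $J$ and the remaining $j_l$'s via momentum conservation (with $\sigma_m\in\{\pm1\}$ free), so the exponent factorizes across the other $d-2$ indices. Using $s\geq s_0$ and $2C_f-2<0$ together with the hypothesis on $s_0$,
$$S(J)\leq 2(d-1)\Bigl(\sum_{J'\in\mathcal{Z}}\e^{(2C_f-2)s_0f(\langle j'\rangle)}\Bigr)^{\!d-2}\leq 2(d-1)\cdot 3^{-(d-2)}.$$
For the $\ell^2$-factor, summing over $J$ uses that for each $(J_1,\dots,J_{d-1})$ momentum conservation leaves only $\sigma\in\{\pm1\}$ free, so
$$\sum_J\sum_{\mathcal{A}(J)}\prod_l v_{J_l}^2=2\Bigl(\sum_{J'}v_{J'}^2\Bigr)^{\!d-1}=2\Vert u\Vert_s^{d-1}\leq 2r^{d-1}.$$
Combining the three displays above gives $\Vert X_P\Vert_s\leq 4d^2(d-1)\,3^{-(d-2)}C_P^2\,r^{d-1}$, and dividing by $r$ yields $|P|_{r,s}\leq C_Pr^{d-2}$ after absorbing the $d$-dependent prefactor into the generic constant still denoted $C_P$.

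The main obstacle is that the argmax $m$ depends on the summation variables, which prevents a direct factorization of the residual exponential. This is resolved by the standard device of summing over the possible values of $m$ at the cost of a factor $d-1$ and then dropping the argmax constraint; the summability condition on $s_0$ is precisely engineered so that what remains is a product of $d-2$ identical series, each bounded by $1/3$, giving a geometric decay in $d$ that more than compensates the polynomial prefactors.
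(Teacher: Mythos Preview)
Your proof is correct and follows essentially the same approach as the paper: both expand $X_P$, invoke momentum conservation together with A.0 to shift the weight $\e^{sf(\langle j\rangle)}$ onto the $d-1$ factors (losing only a factor $(1-C_f)$ on the non-argmax indices), and close via the $s_0$-summability hypothesis. The only difference is Cauchy--Schwarz bookkeeping---the paper isolates the argmax factor $|u_{J_m}|\e^{sf(\langle J_m\rangle)}$ up front and bounds each of the remaining $d-2$ one-index sums $\sum_J|u_J|\e^{sC_f f}$ individually by Cauchy--Schwarz, whereas you apply a single global Cauchy--Schwarz to the whole inner sum and defer the argmax issue to the weight factor $S(J)$ via a union over the $d-1$ possible positions of $m$; your route makes the argmax dependence more explicit (a point the paper glosses over) at the price of a slightly larger uniform-in-$d$ prefactor, which is why your final step needs the absorption into $C_P$ rather than hitting $\leq C_P$ on the nose.
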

	\begin{proof}
		Let $P=\sum_{\mathcal{J}\in\mathcal{I}_d}P_{\mathcal{J}}u_{J_1}...u_{J_d},$ denote multi-index $\{J_1,...J_{k-1},(j,-1),J_{k+1},...,J_d\}$ by $\hat{J}_{k,j}$, and denote $\{J_1,...J_{k-1},J_{k+1},...,J_d\}$ by $\hat{J}_k,$ then
		\begin{align*}
			(X_P)_{j,+1}&=-\i\sum_{k=1}^{d}\sum_{\hat{J}_{k,j}\in \mathcal{I}_d}P_{\hat{J}_{k,j}}u_{\hat{J}_k},\\
			|(X_P)_{j,+1}|&\leq C_P\sum_{k=1}^{d}\sum_{\hat{J}_{k,j}\in \mathcal{I}_d}|u_{\hat{J}_k}|,\\
			|(X_P)_{j,+1}|e^{sf(\langle j\rangle)}&\leq C_P\sum_{k=1}^{d}\sum_{\hat{J}_{k,j}\in \mathcal{I}_d}|u_{\hat{J}_k}|e^{sf(|j|)}.
		\end{align*}
		Notice that $|j|=|\mathcal{M}(J_1,...,J_{k-1},J_{k+1},J_d)|,$ from $\mathcal{M}(\hat{J}_{k,j})=0.$   When $d\geq3$,  we have
		\begin{align*}
			sf(\langle j\rangle)\leq sf(\sum_{l\neq k}\langle J_l\rangle)\leq sf(\langle J_m\rangle)+sC_f(\sum_{l\neq m,k}\langle J_l\rangle).
		\end{align*} 
		We omit a technical discussion here. 
		Then 
		\begin{align*}
			|(X_P)_{j,+1}|e^{sf(\langle j\rangle)}&\leq C_P\sum_{k=1}^{d}\sum_{\hat{J}_{k,j}\in \mathcal{I}_d}
			e^{(1-C_f)sf(|J_m|)}\prod_{J\in\hat{J}_k}|u_{J}|e^{sC_f(\langle J\rangle)},\\
			\sum_{j\in\mathbb{Z}}|(X_P)_{j,+1}e^{sf(\langle j\rangle)}|^2&\leq C_P^2
			\sum_{j\in \mathbb{Z}} 
			\left(\sum_{k=1}^{d}\sum_{\hat{J}_{k,j}\in \mathcal{I}_d}
			e^{(1-C_f)sf(\langle J_m\rangle)}\prod_{J\in\hat{J}_k}|u_{J}|e^{sC_f(\langle J\rangle)}\right)^2\\
			&\leq d^2C_P^2(\sum_{J_m}|u_m|e^{sf(\langle J_m\rangle)})\prod_{J\neq J_m,J\in\hat{J}_k}\left(\sum_{J}|u_J|e^{sC_ff(\langle J\rangle )} \right)^2\\
			&\leq d^2C_P^2(\sum_{J_m}|u_m|e^{2sf(|J_m|)})\\
			&\prod_{J\neq J_m,J\in\hat{J}_k}(\sum_{J}|u_J|^2e^{2sf(|J|)})(\sum_{J}e^{(2C_f-2)s_0f(|J|)})^{d-2}\\
			&\leq \frac{d^2}{9^{d-2}}C_P^2\Vert u\Vert_s^{2d-2}.
		\end{align*}
		So $\Vert X_P\Vert_s\leq C_P\Vert u\Vert_s^{d-1}$ comes to the conclusion $|P|_{r,s}\leq C_Pr^{d-2}$.
	\end{proof}
	
	\begin{lemma}[Cutting lemma]\label{cut}
		For $s>s_0,$
		if monomials $P\in\mathcal{P}_d, d\geq3$ have at least 3 degree zero at $u^>=0$, then we have
		$$|P|_{r,s} \leq C_{P}\frac{(2r)^{d-2}}{e^{(s-s_0)f(N)}}.$$ 
		
	\end{lemma}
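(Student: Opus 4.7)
My plan is to parallel the proof of Lemma \ref{norm}, exploiting the high-mode structure of $P$ at the Cauchy--Schwarz step so as to extract an extra factor $e^{-(s-s_0)f(N)}$. The key observation is that every monomial $P_{\mathcal{J}}u^{\mathcal{J}}$ of $P$ has at least three factors $u_{J_l}$ with $|J_l|>N$. Consequently, in any nonzero component $(X_P)_{j,+1}$, each summand corresponds to a tuple $\hat{J}_k$ that still contains at least two high-mode indices. In particular the maximal index $J_m\in\hat{J}_k$ used in the submultiplicativity bound is itself a high mode, and there is at least one further high-mode index, which I call $J_{l_0}$ with $l_0\neq m,k$.

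First, I follow the opening of the proof of Lemma \ref{norm} verbatim. From momentum conservation $j=\sum_{l\neq k}\sigma_l j_l$ and the submultiplicativity of $f$, I obtain
$$e^{sf(\langle j\rangle)}\leq e^{sf(\langle J_m\rangle)}\prod_{l\neq m,k}e^{sC_f f(\langle J_l\rangle)},$$
and therefore
$$|u_{\hat{J}_k}|e^{sf(\langle j\rangle)}\leq |u_{J_m}|e^{sf(\langle J_m\rangle)}\cdot|u_{J_{l_0}}|e^{sC_f f(\langle J_{l_0}\rangle)}\prod_{l\neq m,k,l_0}|u_{J_l}|e^{sC_f f(\langle J_l\rangle)}.$$
The factor at $J_m$ and the factors at the indices $l\neq m,k,l_0$ are summed exactly as in Lemma \ref{norm}: Cauchy--Schwarz pairs $|u_J|e^{sf(\langle J\rangle)}$ against $\Vert u\Vert_s$, and the tail $\sum_J e^{(2C_f-2)s_0 f(\langle J\rangle)}<\tfrac13$ is controlled by the defining condition of $s_0$. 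Together they contribute $\Vert u\Vert_s^{d-1}$ up to the standard combinatorial constant.

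The factor indexed by the distinguished high mode $J_{l_0}$ is where the extra smallness appears. Restricting the sum to $|J_{l_0}|>N$ and applying Cauchy--Schwarz gives
$$\sum_{|J_{l_0}|>N}|u_{J_{l_0}}|e^{sC_f f(\langle J_{l_0}\rangle)}\leq\Vert u\Vert_s\,\Bigl(\sum_{|J|>N}e^{2(C_f-1)sf(\langle J\rangle)}\Bigr)^{1/2}.$$
Splitting $s=s_0+(s-s_0)$ in the weight and using $C_f<1$ together with $f(\langle J\rangle)\geq f(N)$ on $|J|>N$,
$$\sum_{|J|>N}e^{2(C_f-1)sf(\langle J\rangle)}\leq e^{-2(1-C_f)(s-s_0)f(N)}\sum_{J}e^{(2C_f-2)s_0 f(\langle J\rangle)}\leq\tfrac13\,e^{-2(1-C_f)(s-s_0)f(N)}.$$

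Assembling the pieces gives $\Vert X_P\Vert_s\lesssim C_P\,\Vert u\Vert_s^{d-1}\,e^{-(1-C_f)(s-s_0)f(N)}$, hence the claimed bound on $|P|_{r,s}$; the mild factor $(1-C_f)$ in the exponent is absorbed by slightly inflating $s_0$ in the hypothesis $s>s_0$ (equivalently, into the constant implicit in $(2r)^{d-2}$). The main technical obstacle is the combinatorial bookkeeping that guarantees the existence of the auxiliary index $J_{l_0}$ for every admissible $k$ and $m$: this is precisely where the hypothesis ``at least $3$ degree zero at $u^{>}=0$'' is decisive, since removing one index (high or low) always leaves at least two high modes in $\hat{J}_k$, so that after singling out the maximum $J_m$ there is still one more to play the role of $J_{l_0}$.
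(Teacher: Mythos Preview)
Your argument is correct in spirit, but it takes a genuinely different route from the paper's proof. The paper first expands $P$ binomially in $(u^>,u^<)$ and then invokes an estimate (from Lemma~3.8 of \cite{BFM24}) of the shape
\[
\|X_P(u^>,u^<)\|_s \leq C_P\,2^d\bigl(\|u^>\|_{s_0}\|u^<\|_s^{d-2}+\|u^>\|_{s_0}^2\|u^<\|_s^{d-3}\bigr),
\]
after which the decay comes in one stroke from the elementary bound $\|u^>\|_{s_0}\leq e^{-(s-s_0)f(N)}\|u\|_s$. You instead re-run the proof of Lemma~\ref{norm} and squeeze the decay out of one designated high-mode factor $J_{l_0}$ at the Cauchy--Schwarz stage. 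Your approach is more self-contained (no external reference), and the combinatorial point that $\hat{J}_k$ always retains two high modes---so $J_m$ is high and a further $J_{l_0}\neq J_m$ exists---is exactly right; the floating position of $l_0$ costs only an extra factor $d$.

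One honest discrepancy: because your distinguished high-mode factor carries weight $e^{sC_f f}$ rather than $e^{sf}$, you end up with $e^{-(1-C_f)(s-s_0)f(N)}$ instead of the stated $e^{-(s-s_0)f(N)}$. Your remark that this is ``absorbed by slightly inflating $s_0$'' or into $(2r)^{d-2}$ is not quite accurate---a multiplicative constant on $(s-s_0)$ in the exponent cannot be hidden in either place---so strictly speaking you prove a weaker inequality than the lemma asserts. For every downstream use in the paper this loss is harmless (it only rescales $C_{\mathtt{fin}}$), but the paper's route via $\|u^>\|_{s_0}$ avoids it entirely and delivers the clean exponent.
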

	\begin{proof}
		Because we can firstly make binomial expansion $P(u)=\sum_{l=3}^{d}P'_{J,l}(u^>)^l(u^<)^{d-l}$, and like the proof of Lemma 3.8 in \cite{BFM24} , we have
		\begin{align*}
			\Vert(X_P)(u^>,u^<)\Vert_s 
			&\leq C_{P}2^d(\Vert u^>\Vert_{s_0}\Vert u^<\Vert_{s}^{d-2}+\Vert u^>\Vert_{s_0}^2\Vert u^<\Vert_s^{d-3}),
		\end{align*}
		and
		$$\Vert u^>\Vert_{s_0}^2=\sum_{|J|>N}e^{2s_0 f(|J|)}|u_{J}|^2=\sum_{|J|>N}\frac{e^{2sf(|J|)}|u_J|^2}{e^{2(s-s_0)f(|J|)}}\leq\frac{\Vert u\Vert_s^2}{e^{2(s-s_0)f(|N|)}}.$$
		So we get 
		$$\sup_{u\in B^s(r)}\Vert(X_P)(u^>,u^<)\Vert_s\leq C_{P}\frac{2^dr^{d-1}}{e^{(s-s_0)N}}, $$
		which means
		$$|P|_{r,s}\leq C_{P}\frac{2^dr^{d-2}}{e^{(s-s_0)f(N)}}.$$
	\end{proof}

	\begin{lemma}[Lie bracket estimate]\label{Lie}
		Given two polynomials $P\in\mathcal{P}_{p},Q\in\mathcal{P}_{q},|Q|_{r,s}\leq\delta:=\frac{\rho}{8e(r+\rho)},$ we have $\{P,Q\}\in\mathcal{P}_{p+q-2}$ and $|\{P,Q\}|_{r,s}\leq |P|_{r+\rho,s}|Q|_{r+\rho,s}\frac{1}{2\delta}$. Besides, 
		$$|ad_{Q}^kP|_{r,s}\leq|P|_{r+\rho,s}(\frac{|Q|_{r+\rho,s}}{2\delta})^k.$$
	\end{lemma}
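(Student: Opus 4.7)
The plan is to first verify the degree formula $\{P,Q\}\in\mathcal{P}_{p+q-2}$, which is immediate from the definition of the Poisson bracket: one derivative in $u_{(j,\sigma)}$ together with one in $u_{(j,-\sigma)}$ shaves two off the total monomial degree, while momentum conservation and the reality condition are inherited.

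For the single-bracket quantitative estimate I would start from the identity $X_{\{P,Q\}}=dX_P\cdot X_Q-dX_Q\cdot X_P$ and apply the Cauchy estimate for analytic vector fields on the Banach space $W_s$. For any $u\in B_s(r)$ the open $\rho$-ball around $u$ lies inside $B_s(r+\rho)$, so
\[
\Vert dX_P(u)\cdot v\Vert_s\leq\frac{1}{\rho}\sup_{u'\in B_s(r+\rho)}\Vert X_P(u')\Vert_s\cdot\Vert v\Vert_s\leq\frac{(r+\rho)|P|_{r+\rho,s}}{\rho}\Vert v\Vert_s.
\]
Pairing this with the trivial bound $\Vert X_Q(u)\Vert_s\leq r|Q|_{r,s}\leq r|Q|_{r+\rho,s}$, summing the symmetric contribution and dividing by $r$ gives $|\{P,Q\}|_{r,s}\leq\frac{2(r+\rho)}{\rho}|P|_{r+\rho,s}|Q|_{r+\rho,s}$, which is majorised by $|P|_{r+\rho,s}|Q|_{r+\rho,s}/(2\delta)$ since $2\leq 4e$.

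For the iterated bracket I would set up a telescoping chain of intermediate radii $r_j:=r+j\rho/k$, $j=0,\dots,k$, and recursively apply the single-bracket estimate to $ad_Q^{j}P=\{ad_Q^{j-1}P,Q\}$ between $r_{k-j+1}$ and $r_{k-j}$, uniformly bounding every $|Q|$-factor at the largest radius $r+\rho$. Since each gap is $\rho/k$, the $k$ factors telescope into
\[
|ad_Q^k P|_{r,s}\leq\left(\frac{2k(r+\rho)}{\rho}\right)^{k}|P|_{r+\rho,s}|Q|_{r+\rho,s}^{k},
\]
and the smallness hypothesis $|Q|\leq\delta$ together with the Stirling inequality $k^k\leq e^{k}k!$ lets one repackage the $(2k)^{k}$ prefactor against $(2\delta)^{-k}$ and the exponentially small $|Q|^{k}$; the constant $8e$ sitting in $\delta=\rho/(8e(r+\rho))$ is tuned precisely so that the repackaging yields the geometric-in-$k$ bound $(|Q|_{r+\rho,s}/(2\delta))^{k}$.

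The main obstacle is exactly this final combinatorial repackaging: a careless bookkeeping forces a $k!$ into the right-hand side, which would later worsen the Lie-series convergence driving Lemma \ref{iter}. If the direct iteration constants fail to line up cleanly, the backup route is the flow-based argument: under $|Q|_{r+\rho,s}\leq\delta$, Gronwall ensures that the time-$t$ Hamiltonian flow $\Phi_Q^{t}$ of $X_Q$ maps $B_s(r)$ into $B_s(r+\rho)$ for every complex $t$ with $|t|$ below a threshold of order $\rho/((r+\rho)\delta)=8e$, so that the Lie series $P\circ\Phi_Q^{t}=\sum_{k\geq 0}(t^{k}/k!)\,ad_Q^{k}P$ together with Cauchy's integral formula in $t$ extracts each $ad_Q^{k}P$ with the required rate.
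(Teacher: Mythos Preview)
The paper does not actually prove this lemma: it simply writes ``The proof can be seen in Appendix B in \cite{BMP20}.'' So there is no in-paper argument to compare against, and your sketch already goes further than the text does.

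Your treatment of the single bracket is correct and standard: the commutator identity $X_{\{P,Q\}}=dX_P\cdot X_Q-dX_Q\cdot X_P$ plus a Cauchy estimate on the $\rho$-enlarged ball gives $|\{P,Q\}|_{r,s}\le \frac{2(r+\rho)}{\rho}|P|_{r+\rho,s}|Q|_{r+\rho,s}$, which is indeed dominated by $|P|_{r+\rho,s}|Q|_{r+\rho,s}/(2\delta)$ since $1/(2\delta)=4e(r+\rho)/\rho$.

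For the iterated bracket, however, your own diagnosis is the honest one and neither of your two routes closes the gap as the lemma is literally stated. The equal-step telescoping yields $\bigl(2k(r+\rho)/\rho\bigr)^{k}$, and comparing with $\bigl(4e(r+\rho)/\rho\bigr)^{k}$ requires $2k\le 4e$, which fails for $k\ge 6$; invoking $k^{k}\le e^{k}k!$ leaves an unabsorbed $k!$. The flow/Cauchy backup fares no better: if $\Phi_Q^{t}$ maps $B_s(r)$ into $B_s(r+\rho)$ for $|t|\le T_0=\rho/\bigl((r+\rho)|Q|_{r+\rho,s}\bigr)$, extracting the $k$-th Taylor coefficient of $P\circ\Phi_Q^{t}$ gives $|ad_Q^{k}P|_{r,s}\le k!\,|P|_{r+\rho,s}\,T_0^{-k}$, again with a $k!$. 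In short, the inequality without a factorial does not follow from either standard mechanism, and the hypothesis $|Q|_{r,s}\le\delta$ (at the \emph{smaller} radius) does not help bound $|Q|_{r+\rho,s}$. The most plausible reading is that the displayed bound is a transcription of the \cite{BMP20} estimate with the factor $k!$ (equivalently, the bound for $\tfrac{1}{k!}ad_Q^{k}P$) dropped. This is harmless for the paper: every use of the lemma in the proof of Lemma~\ref{iter} occurs inside a Lie series, i.e.\ multiplied by $1/l!$, so the missing factorial is supplied there.
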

	The proof can be seen in Appendix B in \cite{BMP20}.
	
	\begin{lemma}[Estimate for $W$ function]\label{Lam}
		For $x<-2,xe^x=y<0,x=W_{-1}(y)$ satisfies
		$$0<\ln(-\frac{1}{y})<-W_{-1}(y)<2\ln(-\frac{1}{y}).$$
	\end{lemma}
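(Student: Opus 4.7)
The plan is to reduce the whole statement to an elementary inequality via the substitution $u = -x$. Since $x < -2$, we have $u > 2$, and the relation $y = xe^x$ becomes $-y = ue^{-u}$, so $-1/y = e^u/u$ and hence $\ln(-1/y) = u - \ln u$. By definition of $W_{-1}$, we also have $-W_{-1}(y) = u$. Thus the chain of inequalities
$$0 < \ln(-1/y) < -W_{-1}(y) < 2\ln(-1/y)$$
is equivalent to
$$0 < u - \ln u < u < 2(u - \ln u), \qquad u > 2.$$

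First, $u - \ln u > 0$ is immediate because $u > \ln u$ for all $u > 0$ (consider the derivative of $u - \ln u$). The middle inequality $u - \ln u < u$ is just $\ln u > 0$, which holds since $u > 2 > 1$. The remaining inequality $u < 2(u - \ln u)$ is equivalent to $u > 2 \ln u$; setting $f(u) = u - 2\ln u$, one has $f'(u) = 1 - 2/u > 0$ for $u > 2$ and $f(2) = 2(1 - \ln 2) > 0$ since $\ln 2 < 1$, so $f$ is strictly positive on $[2,\infty)$, which closes the estimate.

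There is no real obstacle: the only mildly nontrivial point is the upper bound, and it reduces to the one-line calculus check above. The substitution $u = -x$ together with the defining property of the $W_{-1}$ branch handles the rest mechanically.
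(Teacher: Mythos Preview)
Your proof is correct and essentially the same as the paper's: the paper writes the chain $-e^{x}>xe^{x}>-e^{x/2}$ for $x<-2$ and then takes logarithms, which after your substitution $u=-x$ is exactly the pair of inequalities $\ln u>0$ and $u>2\ln u$ that you check. Your presentation via $u=-x$ and $\ln(-1/y)=u-\ln u$ is a bit more transparent, but the content is identical.
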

	\begin{proof}
		When $x<-2,-e^{x}>xe^x>-e^\frac{x}{2}$, we have
		\begin{align*}
			-e^{W_{-1}(y)}&>y>-e^{\frac{W_{-1}(y)}{2}},\\
			W_{-1}(y)&<\ln(-y)<\frac{W_{-1}(y)}{2}<0,\\
			0&<-\frac{W_{-1}(y)}{2}<\ln(-\frac{1}{y})<-W_{-1}(y),
		\end{align*}
		as desired.
	\end{proof} 
	\begin{lemma}\label{weight}
		$f=(x+2)^\theta,f=(\ln(x+\kappa))^q$ both satisfy assumption A.0.
	\end{lemma}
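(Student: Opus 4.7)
The plan is to verify the three conditions of Assumption A.0 for each of the two candidate weight functions. Positivity, monotonicity, and divergence to $+\infty$ are trivial in both cases (since both $(x+2)^\theta$ with $0<\theta<1$ and $(\ln(x+\kappa))^q$ with $q>1$ are obviously positive, increasing, and unbounded on $\mathbb{N}^+$, provided $\kappa$ is chosen so that $\ln(c+\kappa)>0$). The real content is the subadditive-type inequality in item (3), which I will handle separately for the two cases.

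For the Gevrey case $f(x)=(x+2)^\theta$, I would exploit concavity of $y\mapsto y^\theta$ on $(0,\infty)$. Writing $S=\sum_{l=1}^d x_l$ and $R=\sum_{l\neq m} x_l$, the tangent line estimate gives
$$(x_m+2+R)^\theta \le (x_m+2)^\theta + \theta\,(x_m+2)^{\theta-1}\,R.$$
Then, using $x_m\ge x_l$ together with $\theta-1<0$, I have $(x_m+2)^{\theta-1}\le(x_l+2)^{\theta-1}$, so each summand $\theta(x_m+2)^{\theta-1}x_l \le \theta(x_l+2)^{\theta-1}(x_l+2) = \theta(x_l+2)^\theta$. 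Summing gives precisely the required inequality with $C_f=\theta<1$.

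For the logarithmic ultra-differential case $f(x)=(\ln(x+\kappa))^q$, the power function $t\mapsto t^q$ is convex, so the simple tangent trick does not apply directly; this is the main obstacle. However, a direct computation shows $f''(x)<0$ precisely when $\ln(x+\kappa)>q-1$. The key is therefore to adjust $\kappa$ so that the concavity of $f$ holds uniformly on the admissible range. Concretely, I would pick $\kappa$ large enough that $u_0:=\ln(c+\kappa)\ge 2q$. Then $f$ is concave on $[c,\infty)$, and for $u_l:=\ln(x_l+\kappa)$,
$$f(S)-f(x_m)\le f'(x_m)\,R = \frac{q\,u_m^{q-1}}{x_m+\kappa}\sum_{l\neq m}x_l \le q\,u_m^{q-1}\sum_{l\neq m}e^{u_l-u_m},$$
using $x_l\le e^{u_l}$ and $x_m+\kappa=e^{u_m}$.

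It remains to bound each term $q\,u_m^{q-1}e^{u_l-u_m}$ by a uniform fraction of $u_l^q=f(x_l)$. Here I would introduce the auxiliary function $g(t)=t^{q-1}e^{-t}$, whose derivative $g'(t)=t^{q-2}e^{-t}(q-1-t)$ is negative for $t>q-1$. Since $u_l\ge u_0\ge 2q\ge q-1$, the function $g$ is decreasing on $[u_l,\infty)$, and for $u_m\ge u_l$,
$$q\,u_m^{q-1}e^{u_l-u_m}=q\,e^{u_l}g(u_m)\le q\,e^{u_l}g(u_l)=q\,u_l^{q-1}=\frac{q}{u_l}\,u_l^q\le\frac{q}{u_0}\,u_l^q.$$
Summing over $l\neq m$ yields the inequality with $C_f=q/u_0\le 1/2<1$. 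The principal technical point to handle carefully is the choice of $\kappa$: it must be simultaneously large enough to ensure concavity of $f$, to make $u_l\ge q-1$ so that $g$ is monotone on the relevant interval, and to force $C_f<1$; all three follow from the single requirement $\ln(c+\kappa)\ge 2q$.
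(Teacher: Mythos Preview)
Your proof is correct, and the approach differs structurally from the paper's.

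For the Gevrey weight, the paper reduces to the two-variable inequality $(x_1+x_2)^\theta\le x_1^\theta+2^{\theta-1}x_2^\theta$ (proved by homogenizing via $t=x_2/x_1$) and then inducts on $d$, obtaining $C_f=2^{\theta-1}$. You instead apply the concavity tangent-line estimate directly to all variables at once, which avoids induction and yields the sharper constant $C_f=\theta$ (note $\theta<2^{\theta-1}$ for $0<\theta<1$).

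For the logarithmic weight, both arguments rest on concavity of $f$ once $\kappa$ is large enough, but the paper again reduces to two variables: it shows $F(x)=f(x+x_2)-f(x)$ is decreasing and bounds $f(2x_2)\le(1+C_f)f(x_2)$ via the ratio $\ln(2x_2+\kappa)/\ln(x_2+\kappa)$, which is only asserted to be small ``for $x_2$ large enough,'' followed by induction. Your route is more explicit: the tangent bound plus the monotonicity of $g(t)=t^{q-1}e^{-t}$ handles all terms simultaneously and produces a quantitative $C_f=q/\ln(c+\kappa)\le 1/2$ under the single hypothesis $\ln(c+\kappa)\ge 2q$. Your argument is cleaner and gives explicit thresholds; the paper's is slightly more elementary in that it reduces everything to checking one scalar ratio, at the cost of leaving the constants implicit.
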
  
	\begin{proof}
		The first two properties are obvious; here, we only need to prove the third one.
		For $f(x)=x^\theta$, we take $c=2,C_f=2^{\theta-1}$, and prove the case of two variables:
		$$(x_1+x_2)^\theta\leq x_1^\theta+2^{\theta-1}x_2^\theta,x_1\geq2,x_2\geq2.$$ 
		By making the homogenizing substitution $t=\frac{x_2}{x_1}\leq1$, it suffices to prove:
		$$(1+t)^\theta\leq1+\frac{(2t)^\theta}{2},0<t\leq1,$$
		which is easy to calculate.
		Then for $x_d<...<x_1$, we can get
		$$f(\sum_{l=1}^{d}x_l)\leq\sum_{l=1}^{d}C_f^{l-1}f(x_l)\leq f(x_1)+C_f\sum_{l=2}^{d}f(x_l).$$
		
		For $f(x)=(\ln x+\kappa)^q,$ we take $F(x)=f(x+x_2)-f(x)$. Notice that 
		$$f''=q(\ln x+\kappa)^{q-2}(\frac{q-1-\ln(x+\kappa)}{x+\kappa}),$$ so when $\kappa=e^q, f''<0$, $F'(x)=f'(x+x_2)-f'(x)<0$, we thus have $F(x_1)\leq F(x_2)=f(2x_2)-f(x_2)$. Hence we just need to prove:
		$$f(2x_2)-f(x_2)\leq C_f f(x_2),$$
		namely $$\frac{\ln (2x_2+\kappa)}{\ln(x_2+\kappa)}\leq(1+C_f)^{\frac{1}{q}},$$
		which is hold for $x_2$ large enough. Then we can also use induction to come to the conclusion.

	\end{proof}
	
	\begin{lemma}\label{det}
		Let $u^{(1)},...,u^{(k)}$ be $k$ independent vectors with $\Vert u^{(i)} \Vert_{\ell^1}\leq1.$ Let $w\in\mathbb{R}^k$ be an arbitrary vector, then there exists $i\in\{1,...,K\},$ such that 
		$$|u^{(i)}\cdot w|\geq\frac{\Vert w\Vert_{\ell^1}\text{det}(u^{(1)},...,u^{(k)})}{k^{\frac{3}{2}}}.$$
	\end{lemma}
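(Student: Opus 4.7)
The plan is to reformulate the inequality in matrix terms and reduce it to an estimate on a sum of minors. Assemble the $u^{(i)}$ as the rows of a $k\times k$ matrix $U$, so that the entries of $y := Uw$ are precisely the inner products $u^{(i)}\cdot w$; the claim then becomes $\|y\|_\infty \geq |\det U|\,\|w\|_1 / k^{3/2}$. By pigeonhole there is an index $j_0$ with $|w_{j_0}|\geq \|w\|_1/k$. Cramer's rule identifies $|\det U|\cdot|w_{j_0}|$ with $|\det U^{(j_0\leftarrow y)}|$, where $U^{(j_0\leftarrow y)}$ is $U$ with its $j_0$-th column replaced by $y$, and Laplace expansion along that column yields
$$\frac{|\det U|\,\|w\|_1}{k} \;\leq\; \|y\|_\infty \sum_{i=1}^{k} |\det U_{i,j_0}|,$$
where $U_{i,j_0}$ denotes the $(k-1)\times(k-1)$ minor obtained by deleting row $i$ and column $j_0$. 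The lemma therefore reduces to showing $\sum_i|\det U_{i,j_0}|=O(\sqrt{k})$.

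For this minor sum, first apply Cauchy--Schwarz to bound it by $\sqrt{k}\bigl(\sum_i(\det U_{i,j_0})^2\bigr)^{1/2}$. Let $B$ be the $k\times(k-1)$ matrix obtained from $U$ by deleting the $j_0$-th column; the Cauchy--Binet formula gives
$$\sum_{i=1}^{k}(\det U_{i,j_0})^2 \;=\; \det(B^{T}B).$$
Now $B^{T}B$ is $(k-1)\times(k-1)$, positive semi-definite, with trace bounded by $\sum_i \|u^{(i)}\|_2^2\leq \sum_i \|u^{(i)}\|_1^2\leq k$, using $\ell^2\leq \ell^1$ and the hypothesis $\|u^{(i)}\|_1\leq 1$. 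AM--GM on the eigenvalues of $B^{T}B$ then gives $\det(B^{T}B)\leq \bigl(k/(k-1)\bigr)^{k-1}\leq e$, so $\sum_i|\det U_{i,j_0}|\leq\sqrt{ek}$. Substituting into the previous estimate produces $\|y\|_\infty\geq |\det U|\,\|w\|_1/(\sqrt{e}\,k^{3/2})$, which is the claim up to the harmless constant $\sqrt{e}$ (absorbed by a slightly larger implicit constant, or recovered exactly by tightening the $(k/(k-1))^{k-1}$ bound for small $k$).

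The main obstacle is exactly the $O(\sqrt{k})$ bound on $\sum_i|\det U_{i,j_0}|$: the naive Hadamard estimate $|\det U_{i,j_0}|\leq 1$ only yields $O(k)$, which would worsen the exponent from $k^{3/2}$ to $k^{2}$. The $\sqrt{k}$ saving is bought by the Cauchy--Binet identity, which converts the sum of squared minors over $i$ into a single Gram determinant $\det(B^{T}B)$ that can be controlled \emph{in aggregate} via the uniform $\ell^{1}$ hypothesis on the rows, whereas the naive bound treats each minor in isolation.
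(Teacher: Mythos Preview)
The paper does not supply its own proof of this lemma; it simply cites Lemma~5.2 of Bambusi--Gr\'ebert \cite{BG06}. So there is no in-paper argument to compare against, and your task was effectively to reconstruct a proof from scratch.

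Your argument is structurally sound: the reduction via Cramer's rule and Laplace expansion to bounding $\sum_i |\det U_{i,j_0}|$ is correct, and the Cauchy--Binet identity $\sum_i (\det U_{i,j_0})^2 = \det(B^\top B)$ together with the AM--GM bound on the eigenvalues is a clean way to extract the $\sqrt{k}$ saving over the naive Hadamard estimate. All the steps check out.

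The one caveat is the constant. Your method yields $\|y\|_\infty \ge |\det U|\,\|w\|_1/(\sqrt{e}\,k^{3/2})$, not the stated $|\det U|\,\|w\|_1/k^{3/2}$. Your parenthetical remark that the $\sqrt{e}$ can be ``recovered exactly by tightening the $(k/(k-1))^{k-1}$ bound for small $k$'' is not correct: $(k/(k-1))^{k-1}$ increases monotonically to $e$ as $k\to\infty$, so for large $k$ your bound is genuinely off by a factor approaching $\sqrt{e}$ and cannot be repaired within this scheme. That said, for every use of this lemma in the present paper the constant is immediately absorbed into $C_\eta$ or analogous constants, so the discrepancy is harmless for the applications. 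If you want the sharp constant, one route is to work in $\ell^2$ throughout: from $w=U^{-1}y$ and the singular-value bound $\sigma_{\min}(U)\ge |\det U|/\sqrt{e}$ (same AM--GM on $\sum\sigma_i^2\le k$), one gets $\|w\|_1\le \sqrt{k}\,\|w\|_2\le \sqrt{k}\,\|U^{-1}\|_2\,\|y\|_2\le k\sqrt{e}\,\|y\|_\infty/|\det U|$, which actually beats the lemma's exponent ($k$ instead of $k^{3/2}$), still with the $\sqrt{e}$.
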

	\begin{lemma}\label{measure}
		Suppose that $g(\tau)$ is $m$ times differentiable on an interval $J\subset\mathbb{R}.$ Let $J_{h}:=\{m\in J\mid |g(\tau)|<h \},h>0.$ If on $J,|g^{(m)}(\tau)|\geq d>0,$ then $|J_h|\leq2(2+3+...+m+d^{-1})h^{\frac{1}{m}}.$ The $|\cdot|$ here is Lebesgue measure. 
	\end{lemma}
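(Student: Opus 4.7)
The plan is to prove this classical sub-level set estimate, a variant of the Russmann--Pyartli lemma, by induction on the differentiation order $m$.

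For the base case $m=1$, the hypothesis $|g'(\tau)| \geq d > 0$ on $J$ forces $g$ to be strictly monotone on $J$ and hence invertible on $g(J)$, with inverse Lipschitz of constant $1/d$. Thus
\[
|J_h| \;=\; |g^{-1}((-h,h)\cap g(J))| \;\leq\; 2h/d,
\]
which matches the stated formula (the arithmetic sum $2+3+\cdots+m$ being empty for $m=1$).

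For the inductive step, suppose the estimate holds at order $m-1$. Given $|g^{(m)}(\tau)| \geq d$ on $J$, since $g^{(m)}$ retains constant sign, $g^{(m-1)}$ is strictly monotone and admits at most one zero on $J$. For a threshold $\delta > 0$ to be fixed, the set
\[
A_\delta := \{\tau \in J : |g^{(m-1)}(\tau)| < \delta\}
\]
is a single subinterval of $J$, and the base case applied to $g^{(m-1)}$---whose derivative $g^{(m)}$ is bounded below by $d$---yields $|A_\delta| \leq 2\delta/d$. The complement $J \setminus A_\delta$ consists of at most two subintervals, on each of which $|g^{(m-1)}| \geq \delta$; applying the inductive hypothesis on each component gives
\[
\bigl|J_h \cap (J \setminus A_\delta)\bigr| \;\leq\; 2 \cdot 2\bigl(2 + 3 + \cdots + (m-1) + \delta^{-1}\bigr)\, h^{1/(m-1)}.
\]

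Combining these two contributions yields $|J_h| \leq 2\delta/d + 4\bigl(2+3+\cdots+(m-1)+\delta^{-1}\bigr) h^{1/(m-1)}$, and choosing $\delta = h^{1/m}$ places the $d^{-1}$-contribution exactly as $2 h^{1/m}/d$---matching the $d^{-1}$ summand in the target constant. The remaining pieces are then recombined: the $h^{1/(m-1)}$-scale factors are dominated by $h^{1/m}$ in the relevant small-$h$ regime, while the combinatorics of ``at most two subintervals at each level'' supplies the integer $m$ in the coefficient, producing the claimed arithmetic pattern $2+3+\cdots+m$.

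The principal obstacle is the explicit bookkeeping of numerical constants through the induction---in particular, tracking how the increment from order $m-1$ to $m$ contributes precisely the summand $m$ to the parenthesised factor while the cross-scale terms collapse cleanly into the $h^{1/m}$-form. This dictates the particular choice $\delta = h^{1/m}$ made above, and, depending on the precise proof style, may call for a marginally strengthened induction hypothesis (or a mild smallness restriction on $h$, which is always satisfied in the application, where $h$ is taken exponentially small in the relevant parameters) in order to absorb the lower-order contributions.
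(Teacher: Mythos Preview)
The paper does not prove this lemma at all: it simply records that Lemma~\ref{measure} is Lemma~5.4 of Bambusi--Gr\'ebert \cite{BG06} and moves on. So there is no ``paper's proof'' to compare against, only the original argument in \cite{BG06}. Your inductive scheme is the natural first attempt, and it does reproduce the stated constant for $m=1$ and $m=2$. For $m\ge 3$, however, it does not close.

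The gap is in the sentence ``the $h^{1/(m-1)}$-scale factors are dominated by $h^{1/m}$ in the relevant small-$h$ regime.'' With your choice $\delta=h^{1/m}$, the inductive hypothesis on each of the two complementary intervals contributes a term $2\delta^{-1}h^{1/(m-1)}=2h^{1/(m-1)-1/m}=2h^{1/(m(m-1))}$. For $m\ge 3$ the exponent $1/(m(m-1))$ is \emph{strictly smaller} than $1/m$, so $h^{1/(m(m-1))}\gg h^{1/m}$ as $h\to 0$: the term you need to absorb blows up relative to the target, and taking $h$ smaller makes the discrepancy worse, not better. Concretely, at $m=3$ your bound contains $4h^{1/6}$, which cannot be controlled by any multiple of $h^{1/3}$ for small $h$. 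Neither a smallness restriction on $h$ nor a mildly strengthened hypothesis of the same shape rescues this.

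The argument in \cite{BG06} that produces exactly the constant $2(2+3+\cdots+m+d^{-1})$ is not an induction on $m$ but a direct telescoping within fixed $m$. One sets $h_k:=h^{k/m}$ and $A_k:=\{|g^{(m-k)}|<h_k\}$ for $k=1,\dots,m$, so that $A_m=J_h$. Rolle's theorem gives that $g^{(m-k)}$ has at most $k$ monotone pieces on $J$; on each such piece the set $\{|g^{(m-k)}|<h_k\}$ is a single interval, and on its intersection with $\{|g^{(m-k+1)}|\ge h_{k-1}\}$ the length is at most $2h_k/h_{k-1}=2h^{1/m}$. Hence $|A_k\setminus A_{k-1}|\le 2k\,h^{1/m}$ for $k\ge 2$, while $|A_1|\le 2h^{1/m}/d$ since $|g^{(m)}|\ge d$. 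Summing yields the stated bound. The arithmetic pattern $2+3+\cdots+m$ comes from the Rolle count of monotone pieces, not from the ``two subintervals'' in your recursion.
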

	
	Lemma \ref{det} and Lemma \ref{measure} are from Lemma 5.2 and Lemma 5.4 in  \cite{BG06}.

	   \section*{Data Availability \& Competing Interests }
	  Data sharing is not applicable to this article as no datasets were generated or analysed during the current study.
	 
	  We have no conflicts of interest to disclose.

		\section*{Acknowledgement}
		The second author (Y. Li) was supported by National Natural Science Foundation of China (12071175, 12471183 and 12531009).

\end{document}